\theoremstyle{plain}
\newtheorem{lemma}{Lemma}[section]
\newtheorem{proposition}[lemma]{Proposition}
\newtheorem{theorem}[lemma]{Theorem}
\newtheorem{corollary}[lemma]{Corollary}
\theoremstyle{definition}
\newtheorem{definition}[lemma]{Definition}
\newtheorem{remark}[lemma]{Remark}
\newtheorem{example}[lemma]{Example}
\newcommand{\rk}{\operatorname{rk}}
\newcommand{\ZZ}{\mathbb{Z}}
\newcommand{\CC}{\mathbb{C}}
\newcommand{\MM}{\mathcal{M}}
\newcommand{\Ad}{\operatorname{Ad}}
\newcommand{\tr}{\operatorname{tr}}
\newcommand{\SL}{\mathrm{SL}}
\newcommand{\sln}[1][n]{\mathfrak{sl}(#1)}
\newcommand{\GLn}[1][n]{\mathrm{GL}(#1)}
\newcommand{\metrep}{\varrho}
\def\co{\thinspace\colon\thinspace}
\title{$\mathrm{SL}_n(\mathbb{C})$--representation spaces of knot groups }
\author[M. Heusener]{Michael Heusener}
\address{
Universit\'e Clermont Auvergne, Universit\'e Blaise Pascal, Laboratoire de Math\`ematiques, 
BP 10448, F-63000 Clermont-Ferrand \\ CNRS, UMR 6620, LM, F-63178 Aubiere, France}
\email{Michael.Heusener@math.univ-bpclermont.fr}
\thanks{Received 
January 31, 2016}
\subjclass[2010]{57M25; 57M05; 57M27; 20C99, 14M99} 
\keywords{knot group; representation variety, character variety}
\begin{document}

\maketitle
\begin{abstract}
%
The first part of this article is a general introduction to the the theory of 
representation spaces of discrete groups into $\SL_n(\CC)$.
Special attention is paid to knot groups.
In Section~\ref{sec:background} we discuss the difference between the tangent space at the representation variety, and the representation scheme. We give an example of Lubotzky and Magid  of a non scheme reduced representation (see Example~\ref{ex:LM}).

In the second part recent results about the 
representation and character varieties of knot groups into 
$\mathrm{SL}_n(\mathbb{C})$ with $n\geq 3$ are presented. 
This second part concerns mostly joint work 
with L.\ Ben Abdelghani, O.\ Medjerab, V.\ Mu\~nos and J.\ Porti.
\end{abstract}

\section{Introduction }
\label{sec:introduction}

Since the foundational work of Thurston  \cite{ThurstonNotes, Thu82}  and Culler and Shalen \cite{CS83}, 
the varieties of representations and characters of three-manifold groups in $\SL_2(\CC)$
have been intensively studied, as they reflect geometric and topological properties of the three-manifold.
In particular they have been  used to study knots $k\subset S^3$, by analysing the
$\SL_2(\CC)$-character variety of the fundamental group of the knot complement
$S^3-k$ (these are called \emph{knot groups}). 

Much less is known about the character varieties of three-manifold groups in other Lie groups, notably
for $\SL_n(\CC)$ with $n\geq 3$. There has been an increasing interest for those
in the last years. For instance, inspired by the A-coordinates in higher 
Teichm\"uller theory of Fock and Goncharov \cite{FG06}, some authors have used
the so called Ptolemy coordinates for studying spaces of representations,
based on subdivisions of ideal triangulations of the three-manifold. Among others, 
we mention the work of  Dimofty, Gabella, Garoufalidis, Goerner, Goncharov, Thurston, 
and Zickert \cite{DGG, DG, GTZ, GZ, GGZ}.
Geometric aspects of these representations, including volume and rigidity, have been addressed by 
Bucher, Burger, and Iozzi in \cite{BBI},
and by Bergeron, Falbel, and Guilloux in \cite{BFG}, who view these representations as holonomies of marked  flag
structures. We also recall the work 
Deraux and Deraux-Falbel in \cite{D1, D2, DF} to study CR and complex hyperbolic structures.
 
\subsection*{Acknowledgements} I like to thank the organizers of the workshop
\emph{Topology, Geometry and Algebra 
of low-dimensional manifolds} for inviting me. In particular, I am very grateful to  Teruaki Kitano and
Takayuki Morifuji for their hospitality (before and after the workshop).
Also, thanks to Leila Ben Abdelghani, Paul Kirk, and Simon Riche for very
helpful discussions during the writing of this article.
Also, I am pleased to acknowledge support from the ANR projects SGT and ModGroup.


\section{Background}\label{sec:background}

\begin{definition}
Let $k\subset S^3$ be a knot. The \emph{knot group} is $\Gamma_k := \pi_1(S^3 \setminus k)$.
The knot exterior is the compact manifold $C_k = \overline{S^3\setminus V(k)}$ where $V(k)$ is a tubular neighborhood of $k$. 
\end{definition}
In what follows we will make use of the following properties of knot groups:
\begin{itemize}
\item
We have 
$H_1(C_k;\ZZ)\cong\mathbb Z$. A canonical surjection
$\varphi\co \Gamma_k\to\ZZ$ is given by $\varphi(\gamma)=\mathrm{lk}(\gamma,k)$
where $\mathrm{lk}$ denotes the linking number in $S^3$ (see \cite[3.B]{BZH}).

\item
The knot exterior is aspherical: we have $\pi_n(C_k)=0$ for $n>1$ i.e. $C_k$ is an Eilenberg--MacLane space $K(\Gamma_k,1)$ (see \cite[3.F]{BZH}).
 As a cosequence, the  \mbox{(co-)}homology groups of 
$\Gamma$ and $C_k$ are naturally identified, and for a given $\Gamma_k$-module $M$ we have
$H^*(C_k;M)\cong H^*(\Gamma_k;M)$, and $H_*(C_k;M)\cong H_*(\Gamma_k;M)$.
\end{itemize}

It follows that every \emph{abelian} representation factors through 
$\varphi\co \Gamma_k\to\ZZ$. Here we call $\rho$ \emph{abelian} if its image is abelian.
We obtain for each non-zero complex number $\eta\in\CC^*$ an abelian representation
$\eta^\varphi\co\Gamma_k\to\mathrm{GL}(1,\mathbb{C}) =\CC^*$ given by
$\gamma\mapsto \eta^{\varphi(\gamma)}$.

\subsection{Representation varieties}
The general reference for representation and character varieties is 
 Lubotzky's and Magid's book \cite{LM85}. Let $\Gamma=\langle\gamma_1,\ldots,\gamma_m\rangle$ be a finitely generated group. 
 
\begin{definition}
A $\mathrm{SL}_n(\mathbb{C})$-representation is a homomorphism $\rho\co\Gamma\to\mathrm{SL}_n(\mathbb{C})$.
The 
$\mathrm{SL}_n(\mathbb{C})$-representation variety is
\[
R_n(\Gamma) = \mathrm{Hom} (\Gamma,\mathrm{SL}_n(\mathbb{C}))
\subset 
\mathrm{SL}_n(\mathbb{C})^m\subset M_n(\CC)^m\cong\CC^{n^2m}\,.
\]
\end{definition}

The representation variety 
$R_n(\Gamma)$ is an affine algebraic set. It is contained in
$\SL_n(\CC)^m $ via the inclusion 
$\rho\mapsto\big(\rho(\gamma_1),\ldots,\rho(\gamma_m)\big)$, and
it is
the set of solutions of a system of polynomial equations in the matrix coefficients.

\subsubsection{Affine algebraic sets} \label{subsubsec:affinealg} Let $k$ be a field and let $F_\lambda=F_\lambda(x_1,\ldots,x_n)\in k[x_1,\ldots,x_n]$, $\lambda\in \Lambda$, be a family of polynomials.
 The set of all common zeros of this family of polynomials is denoted by
 \[
 \mathcal V(\{F_\lambda, \lambda\in\Lambda\}) =\{ v\in k^n\mid F_\lambda(v)=0 \text{ for all $\lambda\in\Lambda$}\}\,.
 \]
 It is clear that $ \mathcal V(\{F_\lambda, \lambda\in\Lambda\})=  \mathcal V(I)$ where $I=(\{F_\lambda, \lambda\in\Lambda\})$ is the ideal generated by the family 
 $\{F_\lambda\}_{\lambda\in\Lambda}$. Recall that, by Hilbert's basis theorem, each ideal $I\subset k[x_1,\ldots,x_n]$ has a finite set of generators. 
An \emph{(affine) algebraic subset} in $k^n$ is a subset $V\subset k^n$  consisting of all common zeros of finitely many polynomials with coefficients in $k$.
It is easy to see that arbitrary intersections and finite unions of affine algebraic sets are affine algebraic.

Now, given an algebraic subset $V\subset k^n$ a function $f\co V\to k$  is called \emph{regular} if there exists $F\in k[x_1,\ldots,x_n]$ such that $f(v)=F(v)$ for all $v\in V$.
All regular functions on $V$ form the \emph{coordinate ring} $\mathcal{O}(V)$ (or $k[V]$) of the variety $V$. Notice that $\mathcal{O}(V)$ is a finitely generated $k$ algebra since
there is a surjection $k[x_1,\ldots,x_n]\to \mathcal{O}(V)$. The kernel of this surjection is called the \emph{ideal of $V$} and is denoted by $\mathcal{I}(V)$, hence
\[ 
\mathcal{I}(V) =\{ F\in k[x_1,\ldots,x_n]\mid F(v)=0 \text{ for all $v\in V$} \}
\quad\text{ and }\quad \mathcal{O}(V)\cong k[x_1,\ldots,x_n]/\mathcal{I}(V)\,.
\]

Notice that in general $\mathcal{I}\big(\mathcal{V}(I)\big)\supset I$ but 
$\mathcal{I}\big(\mathcal{V}(I)\big)\neq I$ is possible. For example, if $V\subset k$ is given by the equation $x^2=0$ then
$I=(x^2)\subsetneq (x) = \mathcal{I}(\{0\})$. 
If $k$ is algebraically closed then Hilbert's Nullstellensatz implies  that
$\mathcal{I}\big(\mathcal{V}(I)\big)$ is equal to the \emph{radical} $\sqrt{I}$ of $I$
\[
\mathcal{I}\big(\mathcal{V}(I)\big)=\sqrt{I} = \{F\in k[x_1,\ldots,x_n]\mid \text{$\exists m\in\ZZ$, $m>0$, such that $F^m\in I$}\}\,.
\]
Now, two affine algebraic sets $V\subset k^m$ and $W\subset k^n$ are isomorphic if and only if there is an algebra isomorphism between
$\mathcal{O}(V)$ and $\mathcal{O}(W)$ (see \cite{Shaf1} for more details).

\begin{example} If $V=\{v=(v_1,\ldots,v_n)\}\subset k^n$ is a point then $\mathcal{I}(\{v\}) = (x_1-v_1,\ldots, x_n-v_n)$ and 
$\mathcal{O}(V)\cong k$. In general, $\mathcal{O}(V)$ is finite dimensional as a $k$-vector space if and only if $V$ is finite, and in this case
$\dim_k(\mathcal{O}(V))= \#  V$.
\end{example}

In the next example we investigate some very special representation varieties:
\begin{example} A homomorphism of $\ZZ$ is determined by the image of the generator $1\in\ZZ$ and hence 
$R_n(\mathbb{Z}) \cong \mathrm{SL}_n(\CC)$. 
Similar, for a free group $F_k$ of rank $k$ we have
$R_n(F_k)\cong \mathrm{SL}_n(\CC)^k$. 

For the cyclic group $\ZZ/2\ZZ$ of two elements  we have $R_2(\ZZ/2\ZZ)=\{\pm I_2\}$ consists only of two points the identity matrix $I_2$ and 
$-I_2$. Hence, $R_2(\ZZ/2\ZZ)$ is not irreducible as an algebraic variety. Even more concretely, a representation $\rho\co\ZZ/2\ZZ\to\SL_2(\CC)$ is determined by the image $X$ of a generator. Now, considering $X=\big(\begin{smallmatrix} x_{11} & x_{12} \\ x_{21} & x_{22} \end{smallmatrix}\big)\in M_2(\CC[x_{11},x_{12},x_{21},x_{22}])$ the relation $X^2 = \big(\begin{smallmatrix} 1 & 0 \\ 0 & 1 \end{smallmatrix}\big)$ and $\det X =1$ give the equations $x_{11} x_{22} - x_{12}x_{21} =1$, and
\[ 
x_{11}^{2} + x_{12} x_{21} =1,\quad 
x_{11} x_{12} + x_{12} x_{22} =0,\quad
x_{11} x_{21} + x_{21} x_{22}=0,\quad x_{12} x_{21} + x_{22}^{2} =1\,.
\] 
The ideal $I$ has a much simple        r set of generators: 
$I=(x_{22}^2 - 1, x_{11} - x_{22}, x_{12}, x_{21})$, and hence
\[
\CC[x_{11},x_{12},x_{21},x_{22}]/I\cong \CC[x]/(x^2 -1)\cong \CC[x]/(x -1)\oplus \CC[x]/(x+1)\cong\CC\oplus\CC
\]
is the coordinate ring of the union of two points. 
\end{example}

\subsubsection{General facts}
Given two representations $\rho_1\co\Gamma\to\mathrm{GL}_m(\CC)$ and
$\rho_2\co\Gamma\to\mathrm{GL}_n(\CC)$ we define the \emph{direct sum}
$\rho_1\oplus\rho_2\co\Gamma\to\mathrm{GL}_{m+n}(\CC)$ 
and the \emph{tensor product} 
$\rho_1\otimes\rho_2\co\Gamma\to\mathrm{GL}_{mn}(\CC)$ by
\[
\big(\rho_1\oplus\rho_2\big)(\gamma) = 
\left(\begin{array}{c|c} \rho_1(\gamma) & 0 \\ \hline 0 &\rho_2(\gamma)\end{array}\right)
\quad\text{ and }\quad
\big(\rho_1\otimes\rho_2\big)(\gamma) = \rho_1(\gamma)\otimes\rho_2(\gamma),
\quad \forall\, \gamma\in \Gamma,
\]
 respectively. Here, $A\otimes B$ denotes the \emph{Kronecker product} of $A\in\mathrm{GL}_m(\CC)$ and 
 $B\in\mathrm{GL}_n(\CC)$.
 The \emph{dual representation} $\rho^*\co\Gamma\to\mathrm{GL}(n)$ of 
 $\rho\co\Gamma\to\mathrm{GL}(n)$ is defined by 
 $\rho^*(\gamma)   = \mbox{}^t\rho(\gamma)^{-1}$ where $\mbox{}^tA$ is the transpose of the matrix $A$. (See also Lemme~\ref{lem:dual}.)

\begin{definition}\label{def:irreducibleRep}
We call a representation $\rho\co\Gamma\to\mathrm{GL}_n(\mathbb{C})$ \emph{reducible} if there exists a nontrivial subspace $V\subset\CC^n$, $0\neq V\neq \CC^n$, such that $V$ is 
$\rho(\Gamma)$-stable. The representation 
$\rho$ is called \emph{irreducible} if it is not reducible.
A \emph{semisimple} representation is a direct sum of irreducible representations.
\end{definition}

The group $\mathrm{SL}_n(\CC)$ acts by conjugation on $R_n(\Gamma)$.
More precisely, for $A\in\mathrm{SL}_n(\mathbb{C})$ and $\rho\in R_n(\Gamma)$ we define
$(A.\rho) (\gamma) = A \rho(\gamma) A^{-1}$ for all $\gamma\in\Gamma$.
Moreover, we let
$\mathrm O(\rho)=\{A.\rho\mid A\in\mathrm{SL}_n(\mathbb{C})\}$ 
denote the \emph{orbit} of $\rho$.
In what follows we will write $\rho\sim\rho'$ if there exists 
$A\in\mathrm{SL}_n(\CC)$ such that $\rho'=A.\rho$, and we will call $\rho$ and $\rho'$ \emph{equivalent}.
For $\rho\in R_n(\Gamma)$ we define its \emph{character}
$\chi_\rho\co\Gamma\to\CC$ by $\chi_\rho(\gamma)= \mathrm{tr}(\rho(\gamma))$.
We have $\rho\sim\rho' \Rightarrow\chi_\rho = \chi_{\rho'}$.

\begin{lemma}
Let $\rho\in R_n(\Gamma)$ be a representation.
The orbit $\mathrm O(\rho)$ is closed if and only if $\rho$ is semisimple.
Moreover, let $\rho,\rho'$ be semisimple. 
Then $\rho\sim\rho' $ if and only if $\chi_\rho =\chi_{\rho'}$. 
\end{lemma}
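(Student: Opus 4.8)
The forward implication of the second claim, $\rho\sim\rho'\Rightarrow\chi_\rho=\chi_{\rho'}$, has already been recorded, so three implications remain. My plan is to treat the orbit statement by a degeneration (Hilbert--Mumford) argument and the character statement by decomposing into irreducibles. For ``$\mathrm{O}(\rho)$ closed $\iff$ $\rho$ semisimple'' the key observation is that a one-parameter subgroup $\lambda\co\CC^*\to\SL_n(\CC)$, which after conjugation we may assume is diagonal with weights $a_1\geq\cdots\geq a_n$, defines a filtration of $\CC^n$ by sums of weight spaces, and that the limit $\lim_{t\to0}\lambda(t).\rho$ exists precisely when $\rho$ preserves this filtration; in that case the limit is the associated graded representation, which is block diagonal. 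First I would prove semisimple $\Rightarrow$ closed: when $\rho$ is a direct sum of irreducibles the filtration splits, so for every such $\lambda$ the associated graded is isomorphic to $\rho$ and every limit point lies in $\mathrm{O}(\rho)$; since a non-closed orbit always admits a destabilizing one-parameter subgroup, $\mathrm{O}(\rho)$ is closed. Conversely, if $\rho$ is not semisimple it has an invariant subspace $W$ admitting no invariant complement; writing $\rho$ in block upper-triangular form for a splitting $\CC^n=W\oplus W'$ with nonzero off-diagonal block and choosing $\lambda$ scaling $W$ and $W'$ by distinct weights (normalized to determinant one), the limit is the block-diagonal associated graded, a semisimple representation not conjugate to $\rho$, so $\mathrm{O}(\rho)$ is not closed.

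For the second claim it suffices to show that a semisimple representation is determined up to conjugacy by its character. Writing $\rho\cong\bigoplus_i m_i\sigma_i$ and $\rho'\cong\bigoplus_j m'_j\sigma'_j$ as sums of pairwise non-isomorphic irreducibles with positive multiplicities, we have $\chi_\rho=\sum_i m_i\chi_{\sigma_i}$ and similarly for $\rho'$. The heart of the matter is that the characters of pairwise non-isomorphic irreducible representations are linearly independent functions on $\Gamma$. I would deduce this from the Jacobson density (Burnside) theorem: extending to the group algebra $\CC[\Gamma]$, the map $\bigoplus_k\sigma_k$ surjects onto $\prod_k M_{n_k}(\CC)$, so one can find an element of $\CC[\Gamma]$ realizing a single matrix unit in one factor and zero in the others; evaluating a vanishing linear combination of the $\chi_{\sigma_k}$ on such an element forces each coefficient to vanish. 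Linear independence then lets me match the two expressions for the common character term by term, so the multisets $\{(\sigma_i,m_i)\}$ and $\{(\sigma'_j,m'_j)\}$ coincide and $\rho\cong\rho'$, that is $\rho\sim\rho'$.

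The main obstacle is the direction ``closed $\Rightarrow$ semisimple'', and more precisely the input that a non-closed orbit is always destabilized by a one-parameter subgroup whose limit lies in a strictly smaller orbit. Over $\CC$ this rests on the Hilbert--Mumford criterion together with the fact that the boundary of an orbit is a union of orbits of strictly smaller dimension; this is where geometric invariant theory genuinely enters, and care is needed to keep the one-parameter subgroups inside $\SL_n(\CC)$ rather than $\mathrm{GL}_n(\CC)$. The character statement, by contrast, is essentially formal once Burnside's theorem is granted.
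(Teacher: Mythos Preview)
The paper does not actually prove this lemma; it simply cites Theorems~1.27 and~1.28 of Lubotzky--Magid. Your proposal, by contrast, supplies a genuine argument along standard lines: the Hilbert--Mumford (Kempf) criterion for the orbit-closure statement, and linear independence of irreducible characters via Jacobson density for the character statement. Both parts are correct, and this is essentially how the cited reference proceeds as well.

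One small slip in your write-up: in the final paragraph you label the delicate direction as ``closed $\Rightarrow$ semisimple'', but that is the easy one --- you dispatched its contrapositive (not semisimple $\Rightarrow$ not closed) by an explicit block-triangular degeneration, which needs no GIT input at all. It is the \emph{other} direction, semisimple $\Rightarrow$ closed, whose contrapositive genuinely requires Hilbert--Mumford to guarantee that some one-parameter subgroup of $\SL_n(\CC)$ degenerates $\rho$ to a point outside $\mathrm O(\rho)$. Your argument itself has the dependencies the right way round; only the summary sentence is mislabelled.
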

\begin{proof}
See Theorems~1.27 and  1.28 in Lubotzky's and Magid's book \cite{LM85}.
\end{proof}

\newpage
\begin{example} \label{ex:non-semisimple}We give two examples of a non-semisimple representations:
\begin{enumerate}
\item Let $\rho\co\ZZ\to\mathrm{SL}_2(\CC)$ be given by
$\rho(n) = \big(\begin{smallmatrix}1&n\\0 & 1\end{smallmatrix}\big)$.
The representation $\rho$ is reducible but not semisimple.
Notice that the orbit $\mathrm{O}(\rho)$ is not closed, 
$\overline{\mathrm{O}(\rho)}$ contains the trivial representation:
$
\lim_{t\to0}  \big(\begin{smallmatrix} t &0\\0 & t^{-1}\end{smallmatrix}\big)
 \big(\begin{smallmatrix} 1 &n\\0 & 1\end{smallmatrix}\big)
 \big(\begin{smallmatrix} t^{-1} &0\\0 & t\end{smallmatrix}\big)=
 \big(\begin{smallmatrix} 1 &0\\0 & 1\end{smallmatrix}\big)
$.

\item Let $\Gamma = \langle S,T\mid STS = TST\rangle$ be the group of the trefoil knot, and let 
$\zeta\in\CC$ be a primitive $12$-th root of unity, $\zeta^4 - \zeta^2 + 1 = 0$. 
We define 
$\rho\co\Gamma\to\mathrm{SL}_2(\CC)$  by
$\rho(S) = \big(\begin{smallmatrix}\zeta & 0 \\ 0 & 1/\zeta\end{smallmatrix}\big)$, and 
$\rho(T) = \big(\begin{smallmatrix} \zeta & 1 \\ 0 & 1/\zeta\end{smallmatrix}\big)$. 
The representation is reducible but not semisimple. Again the orbit 
$\mathrm{O}(\rho)$ is not closed, 
$\overline{\mathrm{O}(\rho)}$ contains the diagonal representation 
$\rho_d = \zeta^\varphi\oplus\zeta^{-\varphi}$.
\end{enumerate}
\end{example}

\subsection{Character varieties} 

The \emph{algebraic quotient} or \emph{GIT quotient} for the action of $\mathrm{SL}_n(\CC)$ on
$R_n(\Gamma)$ is called the \emph{character variety}. This quotient  will be denoted by
$X_n(\Gamma) = R_n(\Gamma)\sslash\mathrm{SL}_n(\CC)$.
The character variety is not necessary an irreducible affine algebraic set.

For an introduction to algebraic invariant theory see Dolgachev's book \cite{Dol04}.
Roughly speaking, \emph{geometric invariant theory} is concerned with an algebraic  action of a 
group $G$ on an algebraic variety $V$. Classical invariant theory addresses the situation when 
$V$ is a vector space and $G$ is either a finite group, or one of the classical Lie groups that acts linearly on $V$. The action of $G$ on $V$ induces an action of $G$ on the coordinate algebra
$\mathcal{O}(V)$ of the variety $V$ given by
$ g\cdot f(v)=f(g^{-1}v)$, for all $g\in G$, and $v\in V$.

The invariant functions of the $G$-action on $V$ are 
\[
\mathcal{O}(V)^G =\{ f\in \mathcal{O}(V)\mid g\cdot f = f \text{ for all $g\in G$} \}\,.
\]
The invariant functions $\mathcal{O}(V)^G$ form a commutative algebra, and this algebra is interpreted as the algebra of functions on the \emph{GIT quotient} $V\sslash G$. 
The main problem is to prove that the algebra $\mathcal{O}(V)^G$ is finitely generated. This is necessary if one wantes the quotient to be an affine algebraic variety. We are only interested in 
\emph{affine varieties} $V$ and in \emph{reductive groups} $G$, and in this situation Nagata's theorem applies (see \cite[Sec.~3.4]{Dol04}). Reductive groups include all finite groups and all classical groups (see \cite[Chap.~3]{Dol04}).
Geometrically,  the GIT quotient $V\sslash G$ parametrizes the set of closed orbits
(see \cite[Corollary~6.1]{Dol04}).
For a point $v\in V$ the orbit $G\,v$ will be denoted by $\mathrm O(v)$. If $f_1,\ldots,f_N$ generate the algebra
$\mathcal{O}(V)^G$ then a model for the quotient is given by the image of the map
$t\co V\to V\sslash G\subset\CC^N$ given by $t(v) = (f_1(v),\ldots,f_N(v))$.

\begin{example} \label{ex:GIT}
We will give three basic examples of  GIT quotients:
\begin{enumerate}
\item Let $\CC^*$ act on $\mathbb {C}^2$ by 
$\lambda.(z_1,z_2) = (\lambda z_1, \lambda z_2)$. The topological quotient
$\CC^2/\CC^*$ is a non-Hausdorff topological space. More precisely, only the  orbit  
$\mathrm O(0,0)=\{(0,0)\}$ is closed, and $(0,0)$ is contained in the closure of every orbit. 
The algebra $\mathcal{O}(\CC^2)$ is isomorphic to 
the polynomial ring in two variables $\CC[x_1,x_2]$, and $\CC[x_1,x_2]^{\CC^*}$ consist only of the constant functions i.e.\ 
$\CC[x_1,x_2]^{\CC^*}\cong\CC$. 
The GIT quotient $\CC^2\sslash\CC^*\cong\{\ast\}$ is just one point, and  $\dim( \CC^2\sslash \CC^*)< \dim(\CC^2) - \dim( \CC^*)$.

\item\label{ex:GIT-C^*}
Let $\CC^*$ act on $\CC^2$ by 
$\lambda.(z_1,z_2) = (\lambda\, z_1, 1/\lambda\,z_2)$. The topological quotient
$\CC^2/\CC^*$ is again non-Hausdorff topological space. More precisely,  
$\mathrm O(1,0)$ and $\mathrm O(0,1)$ are not closed and disjoint, but the closed orbit 
$\{(0,0)\}$ is contained in the closure of both orbits. 
Now, in order to determine $\CC[x_1,x_2]^{\CC^*}$ we consider 
the space $R_n\subset \CC[x_1,x_2]$ of homogeneous polynomials of degree $n$.
The set $R_n$ is a vector space of dimension $n+1$ with basis $x_1^i x_2^j$,
$i+j=n$, and it is stable by the action of $\CC^*$. Now, 
$\lambda\cdot (x_1^i x_2^{j}) = \lambda^{i-j}\, x_1^i x_2^{j}$, and  the algebra of invariant functions is generated by $x_1x_2$. Hence $\CC[x_1,x_2]^{\CC^*}=\CC[x_1x_2]\cong\CC[x]$.
It follows that GIT quotient $\CC^2\sslash\CC^*\cong\CC$. 
The quotient map $t\co\CC^2\to\CC$ is given by the invariant functions $t(z_1,z_2)=z_1z_2$.
The whole ``non-hausdorff'' part 
$
\mathrm O(1,0)\cup \{(0,0)\}\cup \mathrm O(0,1)\cong \CC\times\{0\}\cup\{0\}\times\CC
$ 
is identified, and mapped by $t$ onto $0\in\CC$.
\item
 $\mathrm{SL}_n(\CC)$ acts on itself by conjugation. Two matrices are conjugate if and only if they have the same Jordan normal forms. As we already saw in Example~\ref{ex:non-semisimple},
 the orbit of an unipotent element  is in general  not closed.
 The GIT quotient $\mathrm{SL}_n(\CC)\sslash\mathrm{SL}_n(\CC)$ is isomorphic to
$\CC^{n-1}$.
The coordinates are the coefficients of the characteristic polynomial
(see \cite[Example~1.2]{Dol04}).
\end{enumerate}
\end{example}

\smallskip

Work of C.~Procesi \cite{Pro76} implies that there exists a finite number of group elements
$\{\gamma_i\mid 1\leq i \leq M\}\subset\Gamma$ such that the image of 
$t\co R_n(\Gamma)\to\CC^M$ given by 
\[t(\rho) = \big(\chi_\rho(\gamma_1),\ldots,\chi_\rho(\gamma_M)\big)\] 
can be identified with the affine algebraic set $X_n(\Gamma)\cong t(R_n(\Gamma))$,
see also \cite[p.~27]{LM85}. This justifies the name \emph{character variety}.

\begin{example} 
\begin{enumerate}
\item Let $F_2$ be the free group on the two generators $x$ and $y$. Then it is possible to show
that $X_2(F_2)\cong \CC^3$ and $t\co R_2(F_2)\xrightarrow{\cong}\CC^3$ given by 
$t(\rho) = \big(\chi_\rho(x),\chi_\rho(y),\chi_\rho(xy)\big)$.
See Goldman's article \cite[Chap.~15]{TeichII} and the article of Gonz\`alez-Acu\~na and 
Montesinos-Amilibia \cite{GM93} for more details.

\item
We obtain $X_3(\mathbb{Z})\cong \CC^{2}$
More precisely,  
$R_3(\mathbb Z)\cong \mathrm{SL}_3(\mathbb C)$ and 
$t\co R_3(\mathbb Z)\xrightarrow{\cong} \mathbb C^2$ is given by 
$t(A) =(\tr(A),\tr(A^{-1}))$.

\item Explicit coordinates for $X_3(F_2)$ are also known: 
$X_3(F_2)$ is isomorphic to a degree 6 affine hyper-surface in $\CC^9$
(see Lawton \cite{Law07}).
\item If $\Gamma$ is a finite group then $X_n(\Gamma)$ is finite for all $n$. 
This follows since $\Gamma$ has up to equivalence only finitely many irreducible representations, and every representation of a finite group is semisimple (see \cite{Serre}).
\end{enumerate}
\end{example}

\subsection{Tangent spaces and group cohomology.} 
The general reference for group cohomology is Brown's book \cite{Bro82}. In order to shorten  notation we will sometimes write $\SL(n)$ and $\mathfrak{sl}(n)$ instead of 
$\SL_n(\CC)$, and $\mathfrak{sl}_n(\CC)$.

The following construction was presented by A.~Weil \cite{Weil64}.
For $\rho\in R_n(\Gamma)$ the Lie algebra $\mathfrak{sl}(n)$ turns into a $\Gamma$-module via $\Ad\circ\rho$, i.e.\
for $X\in\mathfrak{sl}(n)$ and $\gamma\in\Gamma$ we have 
$\gamma\cdot X = \Ad_{\rho(\gamma)} (X) = \rho(\gamma) X \rho(\gamma)^{-1}$.
In what follows this $\Gamma$-module will be denoted by $\sln_{\Ad\rho}$.
We obtain an inclusion 
$T^\mathit{Zar} R_n(\Gamma)\hookrightarrow Z^1(\Gamma, \mathfrak{sl}(n)_{\Ad\rho})$: 
for a smooth family of representations $\rho_t$ with $\rho_0=\rho$ we obtain a map
$u\co\Gamma\to\mathfrak{sl}(n)$ given by 
\begin{equation}\label{eq:integrable}
u(\gamma) = \frac{d \rho_t(\gamma)}{dt}\Big|_{t=0} \rho(\gamma)^{-1}\,.
\end{equation}
The map $u$ verifies: $u(\gamma_1\gamma_2)= u(\gamma_1) + \gamma_1\cdot u(\gamma_2)$
i.e.\ $u\in Z^1(\Gamma, \mathfrak{sl}(n)_{\Ad\rho})$ is a \emph{cocycle} or \emph{derivation} in group cohomology.
If $\rho_t = \Ad_{A_t}\circ\rho$ is contained in $\mathrm{O}(\rho)$ where $A_t$, $A_0=I_n$, is a path of matrices, then the corresponding cocycle is a coboundary  i.e.\ there exists $X\in\mathfrak{sl}(n)$ such that 
$u(\gamma) = (1-\gamma)\cdot X = X - \Ad_{\rho(\gamma)}(X)$. 

\medskip

\paragraph{Attention!} 
The inclusion 
$
T^\mathit{Zar} R_n(\Gamma)\hookrightarrow Z^1(\Gamma, \mathfrak{sl}(n)_{\Ad\rho})
$
might be strict (see Example~\ref{ex:LM}).
More precisely, the space $Z^1(\Gamma;\mathfrak{sl}(n)_{\Ad\rho} )$ 
is the  Zariski tangent space
to the scheme $\mathcal{R}( \Gamma, \mathrm{SL}_n( \mathbb C))$ at $\rho$ (see Section~\ref{subsec:scheme}).
\begin{definition}
Let $\rho\co\Gamma\to\SL(n)$ be a representation.
A derivation $u\in Z^1(\Gamma;\mathfrak{sl}(n)_{\Ad\rho} )$ is called \emph{integrable} if there exists a family of representations $\rho_t\co\Gamma\to\SL(n)$ such that $\rho_0=\rho$ and \eqref{eq:integrable} holds.
\end{definition}

\subsubsection{Detecting smooth points}
The following is a quite  useful observation \cite[p.~iv]{LM85} for detecting smooth points of the representation variety.
In general not every cocycle is integrable and there are different reasons for this.
We have  the following inequalities  
\begin{equation}\label{eq:tangent}
\dim_\rho R_n(\Gamma)\leq\dim T^\mathit{Zar}_\rho R_n(\Gamma)\leq
\dim Z^1(\Gamma, \mathfrak{sl}(n)_{\Ad\rho})
\end{equation}
where $\dim_\rho R_n(\Gamma)$ denotes the local dimension of $R_n(\Gamma)$ at $\rho$ i.e.\
the maximum of the dimensions of the irreducible components of $R_n(\Gamma)$ containing $\rho$.

In what follows, will call $\rho$ a \emph{regular} or \emph{scheme smooth} point of $R_n(\Gamma)$
if the equality 
$\dim_\rho R_n(\Gamma)=\dim Z^1(\Gamma, \mathfrak{sl}(n)_{\Ad\rho})$ holds.
In this case every derivation is integrable, and we obtain the following:
\begin{lemma}[see \protect{\cite[Lemma~2.6]{Heusener-Porti-Suarez2001}}]\label{lem:smoothness}
Let $\rho\in R_n(\Gamma)$ be a representation. If $\rho$ is regular, then $\rho$ is a smooth point of the representation variety $R_n(\Gamma)$, and $\rho$ is contained in a unique component of $R_n(\Gamma)$ of dimension 
$\dim Z^1(\Gamma;\sln_{\Ad\rho})$.
\end{lemma}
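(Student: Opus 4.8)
The plan is to reduce the statement to two standard facts about the local ring $\mathcal{O} = \mathcal{O}_{R_n(\Gamma),\rho}$ of the affine algebraic set $R_n(\Gamma)$ at the point $\rho$: the characterization of regular local rings via the cotangent space, and the fact that a regular local ring is an integral domain. The only geometric input I need is the chain \eqref{eq:tangent} together with the regularity hypothesis.

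First I would note that regularity of $\rho$ means, by definition, that $\dim_\rho R_n(\Gamma) = \dim Z^1(\Gamma,\sln_{\Ad\rho})$. Feeding this into \eqref{eq:tangent}, the middle term $\dim T^{\mathit{Zar}}_\rho R_n(\Gamma)$ is squeezed between two equal quantities, so all three coincide:
\[
\dim_\rho R_n(\Gamma) = \dim T^{\mathit{Zar}}_\rho R_n(\Gamma) = \dim Z^1(\Gamma,\sln_{\Ad\rho}).
\]
In particular $\dim T^{\mathit{Zar}}_\rho R_n(\Gamma) = \dim_\rho R_n(\Gamma)$.

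Next I would translate this equality into the local ring $\mathcal{O}$, with maximal ideal $\mathfrak{m}$ and residue field $\CC$. The Zariski tangent space has dimension $\dim_\CC \mathfrak{m}/\mathfrak{m}^2$, while the local dimension $\dim_\rho R_n(\Gamma)$ — the maximum of the dimensions of the irreducible components through $\rho$ — is exactly the Krull dimension of $\mathcal{O}$. For any Noetherian local ring one has $\dim_\CC \mathfrak{m}/\mathfrak{m}^2 \geq \dim \mathcal{O}$, and equality is the defining property of a regular local ring. By the previous step equality holds, so $\mathcal{O}$ is regular; since $\CC$ is algebraically closed this says precisely that $\rho$ is a smooth point of $R_n(\Gamma)$.

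Finally, to obtain uniqueness of the component, I would invoke that a regular local ring is an integral domain. Hence $\mathcal{O}$ has a unique minimal prime, which corresponds to a unique irreducible component of $R_n(\Gamma)$ containing $\rho$; its dimension equals $\dim \mathcal{O} = \dim Z^1(\Gamma,\sln_{\Ad\rho})$ by the first step. The argument has no genuinely hard step: the main point to handle carefully is matching the \emph{local dimension}, defined as the maximum over components, with the Krull dimension of $\mathcal{O}$, and recognizing that the uniqueness of the component is not an additional hypothesis but a consequence of regularity forcing $\mathcal{O}$ to be a domain.
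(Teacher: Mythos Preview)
The paper does not give its own proof of this lemma; it simply cites \cite[Lemma~2.6]{Heusener-Porti-Suarez2001} and proceeds to Example~\ref{ex:central}. Your argument is a correct self-contained proof: squeezing \eqref{eq:tangent} with the regularity hypothesis, identifying the local dimension with the Krull dimension of the local ring $\mathcal{O}_{R_n(\Gamma),\rho}$, applying the cotangent-space criterion for regularity of a Noetherian local ring, and then using that a regular local ring is a domain to get uniqueness of the component. This is the standard commutative-algebraic route and is in the spirit of the observation attributed to Lubotzky--Magid on which the paper's discussion is based.
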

\begin{example}\label{ex:central}
Central representations are smooth points of $R_n(\Gamma_k)$.
Let $\Gamma_k$ be a knot group and $\rho_0\in R_n(\Gamma_k)$ be a central  representation i.e.\ 
$\rho_0(\gamma) = \zeta^{\varphi(\gamma)} \mathrm{Id}_n$ where $\zeta\in\mathbb{C}^*$, $\zeta^n=1$.
Then $\mathfrak{sl}(n)$ is a trivial $\Gamma_k$-module and
\[ Z^1(\Gamma, \mathfrak{sl}(n)) = 
H^1(\Gamma, \mathfrak{sl}(n)) = H^1(\Gamma, \mathbb{Z})\otimes \mathfrak{sl}(n)\]
has dimension $n^2-1$. 

On the other hand the surjection $\varphi\co\Gamma_k\to\mathbb{Z}$ induces an injection
$\varphi^*\co R_n(\mathbb{Z})\hookrightarrow R_n(\Gamma_k)$ where $R_n(\mathbb{Z})\cong\mathrm{SL}_n(\CC)$.
Therefore,
$ n^2-1\leq\dim_{\rho_0} R_n(\Gamma_k)\leq \dim Z^1(\Gamma, \mathfrak{sl}(n)) = n^2-1$,
and $\rho_0\in R_n(\Gamma_k)$ is a regular point which is contained in an unique 
$(n^2-1)$-dimensional component of $R_n(\Gamma_k)$ (the component consist of abelian representations).
\end{example}

We give an example where the first inequality of \eqref{eq:tangent} is strict, and the second is an equality. In this case the representation $\rho$ is a \emph{singular point} of the representation variety, but we will see that in our example $\chi_\rho\in X_2(\Gamma )$ is a smooth point.
\begin{example} \label{ex:dyck}
Let $\Gamma = D(3,3,3)=\langle a,b,c\mid a^3,b^3, c^3, abc \rangle\cong
\langle a,b\mid a^3,b^3, (ab)^3 \rangle$ be the van Dyck group. 
We consider the representation $\rho_0\co\Gamma\to\SL(2)$ given by
\[
\rho_0(a) =\rho_0(b) =A=\big(\begin{smallmatrix} \omega & 0\\ 0&\bar\omega \end{smallmatrix}\big)
\] 
where $\omega$ is a primitive third root of unity.

Let $F(a,b)$ denote the free group of rank two and consider the canonical surjection
$\kappa\co F(a,b)\to\Gamma $. We consider $\mathfrak{sl}(2)$ as a $F(a,b)$-module via
$\Ad\circ\rho_0\circ\kappa$.
%
Now, for every $X,Y\in\mathfrak{sl}(2)$ we obtain a cocycle 
$z\co F(a,b) \to \mathfrak{sl}(2)$ such that $z(a)=X$ and $z(b)=Y$.
By using Fox-calculus \cite[Chapter~9]{BZH}, we obtain for $w\in F(a,b)$ 
\[
z(w) = \frac{\partial w}{\partial a}\cdot X + \frac{\partial w}{\partial b}\cdot Y\,.
\]
This cocycle factors through $\kappa$ if and only if $z(a^3)=z(b^3)=z((ab)^3)=0$.
Writing
$X=\big(\begin{smallmatrix} x_{11} & x_{12}\\ x_{21} & -x_{11}\end{smallmatrix}\big)$
and
$Y=\big(\begin{smallmatrix} y_{11} & y_{12}\\ y_{21} & -y_{11}\end{smallmatrix}\big)$ the
 equation $z(a^3)= 0$ gives $0 = (1+a+a^2)\cdot X = X +\Ad_{A}(X) +\Ad^2_{A}(X)$ 
 and hence $x_{11}=0$.
Similar $z(b^3)=0$ gives $y_{11}=0$. The equation 
$z((ab)^3)=0$ gives no further restrictions.
Hence the space of cocycles $Z^1(\Gamma , \mathfrak{sl}(2)_{\Ad\rho_0})$
is $4$-dimensional. The space $B^1(\Gamma , \mathfrak{sl}(2)_{\Ad\rho_0})$ is $2$-dimensional and generated by $b_1$ and $b_2$ which are given by
\[
 b_1\co a,b \mapsto \big(\begin{smallmatrix} 0& 1\\ 0 & 0\end{smallmatrix}\big)
 \text{ and }
  b_2\co a,b \mapsto \big(\begin{smallmatrix} 0& 0\\ 1 & 0\end{smallmatrix}\big)\,.
\]
Two non-principal derivations are given by 
\[
z_1(a)= \big(\begin{smallmatrix} 0& 0\\ 0 & 0\end{smallmatrix}\big),\quad 
z_1(b)= \big(\begin{smallmatrix} 0& 1\\ 0 & 0\end{smallmatrix}\big),
\quad
\text{ and }
\quad
z_2(a)=  \big(\begin{smallmatrix} 0& 0\\ 0 & 0\end{smallmatrix}\big),\quad
z_2(b) = \big(\begin{smallmatrix} 0& 0\\ 1 & 0\end{smallmatrix}\big)\,.
\]

These two derivations are integrable, more precisely the two families $\rho_i(t)\co\Gamma \to\SL(2)$ are given by
$\rho_i(t)(\gamma) = (I_2 + t z_i(\gamma))\rho(\gamma)$, or explicitly by
\[
\rho_1(t) :
\begin{cases}
a & \mapsto \big(\begin{smallmatrix} \omega& 0\\ 0 & \bar\omega\end{smallmatrix}\big),\\[1ex]
b & \mapsto \big(\begin{smallmatrix} 1& t\\ 0 & 1\end{smallmatrix}\big)
                     \big(\begin{smallmatrix} \omega& 0\\ 0 & \bar\omega\end{smallmatrix}\big)=
                     \big(\begin{smallmatrix} \omega& t\bar\omega\\ 0 & \bar\omega\end{smallmatrix}\big),\end{cases}
\qquad
\rho_2(t) :
\begin{cases}
a & \mapsto \big(\begin{smallmatrix} \omega& 0\\ 0 & \bar\omega\end{smallmatrix}\big),\\[1ex]
b & \mapsto \big(\begin{smallmatrix} 1& 0\\ t & 1\end{smallmatrix}\big)
                   \big(\begin{smallmatrix} \omega& 0\\ 0 & \bar\omega\end{smallmatrix}\big)=
                   \big(\begin{smallmatrix} \omega& 0\\ t\omega & \bar\omega\end{smallmatrix}\big)\,.
\end{cases}
\]
It follows that
$\dim T^\mathit{Zar}_{\rho_0} R_n(\Gamma)=\dim Z^1(\Gamma, \mathfrak{sl}(n)_{\Ad\rho_0})$.

Now notice that if $A\in\SL(2)$ verifies 
$A^3=I_2$ and $A\neq I_2$, then 
$A$ is conjugate to $\big(\begin{smallmatrix} \omega & 0\\ 0&\bar\omega \end{smallmatrix}\big)$
where $\omega$ is a third root of unity, $\omega^2+\omega+1=0$.  
Hence we have for all $A\in\SL(2)$:
\[  A^3=I_2 \quad\Longleftrightarrow\quad  A= I_2 \text{ or } \tr A =-1\,.\]

\begin{lemma}\label{lem:triangular}
All representation $\rho\co D(3,3,3)\to\SL(2)$ are reducible.
More precisely, if   $\rho(a)=I_2$ or $\rho(b)=I_2$ or $\rho(ab))=I_2$ is trivial then $\rho$ is conjugate to a diagonal representation. If
$\tr\rho(a)=\tr\rho(b)=\tr\rho(ab)=-1$ then $\chi_\rho=\chi_{\rho_0}$, and $\rho$
is conjugate to an upper/lower triangular representation.
\end{lemma}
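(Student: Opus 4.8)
The plan is to study $\rho$ through the three matrices $A=\rho(a)$, $B=\rho(b)$ and $C=\rho(ab)=AB$. Each of them satisfies the relation $X^3=I_2$, so by the equivalence recorded immediately before the lemma each is either equal to $I_2$ or has trace $-1$. Consequently exactly one of two situations occurs: either at least one of $A,B,C$ equals $I_2$, or all three have trace $-1$. These are the two cases I would treat, and they match precisely the two clauses of the statement.

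First I would dispose of the case in which one of the three images is trivial. If $A=I_2$, then $\rho$ is determined by $B$, and since $X^3-1$ has three distinct roots in $\CC$ the relation $B^3=I_2$ forces $B$ to be diagonalisable; conjugating $B$ to diagonal form exhibits $\rho$ as conjugate to a diagonal representation. The case $B=I_2$ is identical, and if $AB=I_2$ then $B=A^{-1}$, so $\rho$ is again generated by a single diagonalisable matrix and the same conclusion holds. In particular $\rho$ is reducible here.

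The main case is $\tr A=\tr B=\tr(AB)=-1$. Here I would use the well-known trace identity
\[
\tr(ABA^{-1}B^{-1})=(\tr A)^2+(\tr B)^2+(\tr AB)^2-\tr A\,\tr B\,\tr AB-2,
\]
which with all three traces equal to $-1$ yields $\tr(ABA^{-1}B^{-1})=2$. By the classical fact that two matrices of $\SL(2,\CC)$ share an eigenvector if and only if their commutator has trace $2$, the pair $(A,B)$ has a common eigenline; conjugating this line to $\langle e_1\rangle$ (respectively $\langle e_2\rangle$) puts $\rho$ into upper (respectively lower) triangular form. This establishes both reducibility and the triangular normal form. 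For the equality of characters I would invoke the fact recalled earlier that the character of an $\SL(2)$-representation of $F(a,b)$ is completely determined by the triple $\big(\chi_\rho(a),\chi_\rho(b),\chi_\rho(ab)\big)$: pulling $\chi_\rho$ and $\chi_{\rho_0}$ back along the surjection $\kappa\co F(a,b)\to\Gamma$, both give the triple $(-1,-1,-1)$ (a direct computation shows $\tr\rho_0(ab)=\omega^2+\bar\omega^2=-1$), so the two characters coincide and $\chi_\rho=\chi_{\rho_0}$.

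The only non-formal ingredient is the commutator-trace criterion for a common eigenvector; the remainder is bookkeeping with the relation $X^3=I_2$ and with the trace coordinates on $X_2\big(F(a,b)\big)$. I expect the main obstacle to be merely expository: checking that the three sub-cases $A=I_2$, $B=I_2$, $AB=I_2$ are genuinely symmetric, and recording the triangular form as upper or lower according to the chosen common eigenline.
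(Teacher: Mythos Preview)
Your argument is correct, and it takes a genuinely different route from the paper in the main case $\tr A=\tr B=\tr(AB)=-1$. The paper argues by direct matrix manipulation: it diagonalises $A$ to $\big(\begin{smallmatrix}\omega&0\\0&\bar\omega\end{smallmatrix}\big)$, then uses the two linear conditions $b_{11}+b_{22}=-1$ and $\omega b_{11}+\bar\omega b_{22}=-1$ to pin down the diagonal entries of $B$ as $(b_{11},b_{22})=(\omega,\bar\omega)$, whence the determinant condition forces $b_{12}b_{21}=0$. You instead compute $\tr[A,B]$ via the Fricke--Vogt identity and invoke the Culler--Shalen reducibility criterion (trace of the commutator equal to $2$). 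Your route is cleaner and explains conceptually \emph{why} the representation must be reducible, and it dovetails nicely with the description of $X_2(F_2)$ already recalled in the paper; the paper's computation, by contrast, gives slightly more: it shows that after diagonalising $A$ the matrix $B$ is automatically triangular with the \emph{same} diagonal as $A$, which is exactly the normal form $\rho_1(t)$ or $\rho_2(t)$ used in the surrounding discussion of the slice \'etale. For the first clause (one of $A,B,AB$ trivial) the two arguments are essentially identical.
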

\begin{proof}
Let $\rho\co D(3,3,3)\to\SL(2)$ be a representation. Then $\rho$ is trivial if and only if
$\rho(a)=\rho(b)=I_2$.
If $\rho(a)=I_2$ and $\rho(b)\neq I_2$ then up to conjugation we may assume that
$\rho(b) = \big(\begin{smallmatrix} \omega & 0\\ 0&\bar\omega \end{smallmatrix}\big)$. We obtain that $\rho$ is a diagonal representation. A similar argument applies if
$\rho(b)=I_2$ or $\rho(ab)= I_2$.

Now suppose that $\tr\rho(a)=\tr\rho(b)=\tr\rho(ab)=-1$. Up to conjugation we obtain that
$\rho(a)=\big(\begin{smallmatrix} \omega& 0\\ 0 & \bar\omega\end{smallmatrix}\big)$ and
$\rho(b)=\big(\begin{smallmatrix} b_{11}& b_{12}\\ b_{21} & b_{22}\end{smallmatrix}\big)$ where
$b_{11} + b_{22} =-1$ and $b_{11}b_{22}-b_{12}b_{21}=1$. The equation $\tr\rho(ab) =-1$ then implies
$
\big(\begin{smallmatrix}
1 & 1\\ \omega&\bar\omega
\end{smallmatrix}\big) \big(\begin{smallmatrix} b_{11}\\ b_{22}\end{smallmatrix}\big) = 
\big(\begin{smallmatrix} -1\\ -1\end{smallmatrix}\big)$. This system has the unique solution
$
(b_{11}, b_{22})=(\omega,\bar\omega)
$.
Finally, $b_{11}b_{22}-b_{12}b_{21}=1$ implies that $b_{12}b_{21}=0$ and $\rho$ is a triangular representation.
\end{proof}

Notice that a cocycle $z=c_1z_1+c_2 z_2$, $c_1,c_2\in\CC$, is integrable if and only if $c_1c_2=0$ i.e.\ 
only multiples of $z_1$ and multiples of $z_2$ are integrable.

It follows from Lemma~\ref{lem:triangular} that the families $\rho_1(t)$ and $\rho_2(t)$ together form a \emph{slice \'etale} $\mathcal{S}_0$ through the representation $\rho_0$
(see \cite{Leila02} for more details). The slice $\mathcal{S}_0$ is isomorphic to the union of the two coordinate axes in $\CC^2$,
\[
\mathcal{S}_0\cong \CC\times\{0\}\cup\{0\}\times\CC\subset\CC^2,\quad \rho_1(s)\mapsto (s,0) \text{ and } \rho_2(t)\mapsto (0,t)\,.
\]

It follows form \cite[Prop.~2.8]{Leila02} that $H^1(\Gamma ,\mathfrak{sl}(2)_{\Ad\,\rho_0})$ is isomorphic to the tangent space $T_{\rho_0}^\mathit{Zar}\mathcal{S}_0$, and that
$T_{\chi_{\rho_0}}^\mathit{Zar} X_2(\Gamma )$ is isomorphic to
$T_{\chi_{\rho_0}}^\mathit{Zar}\big( \mathcal{S}_0\sslash \mathrm{Stab}(\rho_0)\big)$.
Now, $\mathrm{Stab}(\rho_0)\cong \CC^*$ consits of diagonal matrices,
 and $\lambda\in\CC^*$ acts as follows
\[
\lambda\cdot \rho_1(s) = \rho_1(\lambda^2s) \quad\text{ and }\quad
\lambda\cdot \rho_2(t) = \rho_1(\lambda^{-2}t)
\]
(see Example~\ref{ex:GIT}.\ref{ex:GIT-C^*}). It follows that 
$\mathcal{S}_0\sslash \mathrm{Stab}(\rho_0)\cong\{0\}$ is just a point, and
that $T_{\chi_{\rho_0}}^\mathit{Zar} X_2(\Gamma )$ vanishes.
Notice that all representations $\rho_i(t)$ are equivalent to $\rho_i(1)$. On the other hand
$\rho_1(1)$ and $\rho_2(1)$ are not equivalent. Again,
$\mathrm O(\rho_i(t))$ is not closed, but 
$\mathrm{O}(\rho_0)= \overline{\mathrm O(\rho_1(t))}\cap\overline{\mathrm O(\rho_2(t))}$.
All representations $\rho_0$, $\rho_1(t)$, and $\rho_2(t)$ have the same character.

Notice also that $H^1(\Gamma ,\mathfrak{sl}(2)_{\Ad\,\rho_0})$ is isomorphic to the tangent space of the slice \'etale, and that
$H^1(\Gamma ,\mathfrak{sl}(2)_{\Ad\,\rho_0})\sslash \mathrm{Stab}(\rho)\cong \CC$. This shows that in 
\cite[Theorem~53]{Sik12} the hypothesis scheme smooth can not be omitted.
\end{example}
\begin{remark}
Example~\ref{ex:dyck} can be generalized  to a representation of the fundamental group of the closed $3$-dimensional Seifert fibred manifold $M$ which is an oriented Seifert-bundle over the orbifold $S^2(3,3,3)$. The fundamental group $\pi_1(M)$ is a central extension of $D(3,3,3)$ with presentation
\[
\pi_1M=\langle a,b,c,z\mid a^3=b^3=c^3=abc=z,[a,z],[b,z],[c,z]\rangle\cong \langle a,b,c\mid a^3=b^3=c^3=abc\rangle\,.
\]
It is easy to see that a diagonal representation $\rho_t\co\pi_1M\to\SL(2)$ given by
$\rho_t\co a,b,c\mapsto \big(\begin{smallmatrix} t& 0\\ 0 & t^{-1}\end{smallmatrix}\big)$
is a singular point of the representation variety if and only if $1+t^2+t^4=0$.
\end{remark}

\subsection{The scheme $\mathcal{R}_n( \Gamma)$.}\label{subsec:scheme}
Let $R$ be a commutative and unitary ring.
A \emph{radical ideal} is an ideal $I\subset R$ such that 
$I=\sqrt{I}=\{r\in R \mid r^k\in I\ \hbox{for some positive integer}\ k\}$.
Notice that $I\subset R$ is radical if and only if the quotient ring $R/I$ is reduced i.e.\ 
$R/I$
has no non-zero nilpotent elements. By virtue of Hilbert's Nullstellensatz there is a bijection between algebraic subsets in $\CC^N$ and radical ideals of $\CC[x_1,\ldots,x_N]$ (see \cite{Shaf1,Shaf2}). 
Recall that over $\CC$ a vanishing ideal $\mathcal{I}(V)$ is always radical 
(see Section~\ref{subsubsec:affinealg}).

Now, the ideal generated by the algebraic equations defining the representation
variety may be non-radical (see Example~\ref{ex:LM}). Therefore, one considers the 
underlying \emph{affine scheme} 
$\mathcal{R}_n( \Gamma) := \mathcal{R}( \Gamma, \mathrm{SL}_n( \mathbb C))$ with a possible non-reduced
coordinate ring. 
Weil's construction gives an isomorphism
\[
T^\mathit{Zar}_\rho \mathcal{R}( \Gamma, \mathrm{SL}_n( \mathbb C)) \cong Z^1(\Gamma;\mathfrak{sl}(n)_{\Ad\rho} )\,.
\]
Each
$d\in Z^1(\Gamma;\mathfrak{sl}_n(\mathbb C))$ gives the infinitesimal deformation
 $
\gamma\mapsto  (1+ \varepsilon\, d(\gamma)) \rho(\gamma), \
\forall\gamma\in\Gamma,
 $
 which satisfies the defining equations for 
 $\mathcal{R}_n( \Gamma)$ up to terms in the ideal
$(\varepsilon^2)$ of $\mathbb C[\varepsilon]$, i.e. a Zariski tangent vector
to $\mathcal{R}_n( \Gamma)$ (see \cite[Prop.~2.2]{LM85} and \cite{Leila02}).

\subsubsection{The difference between a scheme and a variety -- heuristics and examples.}
We start with some heuristics. For more details see Shafarevich's book \cite[5.1]{Shaf2}. Here we are only interested in 
\emph{affine schemes} which correspond to rings of the form $R=\CC[x_1,\ldots,x_N]/I$, for an ideal $I\subset\CC[x_1,\ldots,x_N]$.
It may happen that $R=\CC[x_1,\ldots,x_N]/I$ is not reduced. In this case can consider the reduced ring
$R_\mathit{red}=\CC[x_1,\ldots,x_N]/\sqrt{I}$ which is the coordinate ring of the variety
$V=\mathcal{V}(I)\subset \CC^N$. The underlying space of the scheme corresponding to $R$ is $\mathrm{Spec}\,R$ the set of \emph{prime ideals} of $R$. 
Since the kernel of $\pi\co R\twoheadrightarrow R_\mathit{red}$ is the nilradical, it follows that $\pi^*\co \mathrm{Spec}\, R_\mathit{red}\to\mathrm{Spec}\,R$ is a homeomorphism of topological spaces (the two spectra are equipped with the Zariski-topology, see \cite[5.1.2]{Shaf2}). 
Now, the points of $V$ correspond to the maximal ideals of $R_\mathit{red}$ which in turn correspond to the maximal ideals in $R$.

On the other hand, the regular functions on $\mathrm{Spec}\, R_\mathit{red}$ and $\mathrm{Spec}\,R$ are different: 
a non-zero nilpotent element $f\in R$ gives a non-zero function on $\mathrm{Spec}\,R$, but $\pi(f)$ is zero in 
$R_\mathit{red}$. This means that there are non-zero functions on $\mathrm{Spec}\,R$ which take the value zero on every point of $V$. 
These functions may affect the calculation of the tangent space.
One can visualize the scheme corresponding to $R$ as containing some extra \emph{normal material} 
which is actually not tangent to a dimension present in the variety.
\begin{example}
The ring $R=\mathbb{C}[T]/(T^2)$ is not reduced, $R_\mathit{red}=\mathbb{C}[T]/(T)$.
Both rings have only one maximal ideal $(T)\subset R$, and $(0)\subset R_\mathit{red}\cong\CC$.
The zero locus of $(T^2)$ and $(T)$ is the same, it is just the point $\{0\}\in\mathbb{C}$.
The projections $\mathbb{C}[T] \to \mathbb{C}[T]/(T^2)$ and $\mathbb{C}[T] \to \mathbb{C}[T]/(T)$
give inclusions $\mathrm{Spec}\,R \hookrightarrow \CC$, and 
$\mathrm{Spec}\, R_\mathit{red}  \hookrightarrow \CC$. Now, the restriction of a function
$f\in \mathbb{C}[T]$ vanishes on $R_\mathit{red}$ if and only if $f\in (T) \Leftrightarrow f(0)=0$.
On the other hand, the restriction of $f$ onto $\mathrm{Spec}\,R$ vanishes if and only if 
$f\in (T^2)\Leftrightarrow f(0)=0$ and $f'(0)=0$. Hence there are non-zero regular functions
on $\mathrm{Spec}\, R$ which are zero on every point of $\mathrm{Spec}\, R$.
This affect the calculation of the Zariski tangent space:
$$
T^\mathit{Zar}_0 \mathrm{Spec}\, R_\mathit{red}=0,\text{ but }
T^\mathit{Zar}_0 \mathrm{Spec}\, R \cong \mathbb{C}\,.
$$

Notice that $\mathrm{Spec}\, \CC[x]/(x^2)$ appears naturally if we intersect the parabola $\mathcal{V}(y-x^2)$ with the coordinate axis $\mathcal{V}(y)$ in $\CC^2$. See \cite[II.3]{Schemes} for a detailed discussion.
\end{example}

\begin{remark}
There is also an associated character scheme
\[
\mathcal{X}(\Gamma,\mathrm{SL}_n( \mathbb C)) = \mathcal{R}(\Gamma,\mathrm{SL}_n( \mathbb C))\sslash\mathrm{SL}_n( \mathbb C)\,.
\]
\end{remark}

In general the relation between the cohomology group 
$H^1(\Gamma, \mathfrak{sl}(n)_{\Ad\rho})$ and the tangent space
$T^\mathit{Zar}_{\chi_\rho} \mathcal{X}(\Gamma,\mathrm{SL}_n( \mathbb C))$
is more complicate. However, if $\rho$ is an 
irreducible regular representation then we have for the character variety
\[ T^\mathit{Zar}_{\chi_\rho} X_n(\Gamma) \cong 
H^1(\Gamma, \mathfrak{sl}(n)_{\Ad\rho}) \cong 
T^\mathit{Zar}_{\chi_\rho} \mathcal{X}(\Gamma,\mathrm{SL}_n( \mathbb C))\,.\]
(See \cite[Lemma~2.18]{LM85},  and \cite[Section~13]{Sik12} for a generalisation to completely reducible regular representations.)

The next example is a representation $\rho\co\Gamma\to\SL(2)$ such that
$\dim_\rho R_2(\Gamma)=\dim T^\mathit{Zar}_\rho R_2(\Gamma)$ and
$\dim T^\mathit{Zar}_\rho R_2(\Gamma)<\dim Z^1(\Gamma, \mathfrak{sl}(2)_{\Ad\rho})$.
Hence the coordinate ring of the associated scheme has nilpotent elements.

%
\begin{example}\label{ex:LM}
Following Lubotzky and Magid   \cite[pp.~40--43]{LM85} we give an example of a finitely presented group $\Gamma$ and a representation $\rho\co\Gamma\to\SL(2)$ with
non reduced coordinate ring.

For motivation we start with the dihedral group $D_3 = \langle a, s\mid a^3, s^2, sas^{-1} = a^{-1}\rangle$, and a representation $r\co D_3 \to \mathrm{Iso}(\CC)$.
Recall that a transformation $\sigma\in\mathrm{Iso}(\CC)$ is of the form 
\[
\sigma\co z\mapsto \zeta\,z +\alpha \quad\text{ or }\quad \sigma\co z\mapsto \zeta\,\bar z +\alpha
\]
where $\alpha\in\CC$, and $\zeta\in\CC^*$, $|\zeta|=1$, is a complex number of norm $1$. 
A homomorphism
$r\co D_3 \to \mathrm{Iso}(\CC)$ is given by
\[ 
r(a) \co z\mapsto  \omega\,z \quad\text{ and }\quad r(s) \co z\mapsto  \bar z
\]
where $\omega$ is a third root of unity $\omega^2+\omega+1=0$.
The image $r(D_3)$ is contained in $\mathrm{Iso}(\CC)_0 := \{\sigma\in \mathrm{Iso}(\CC)\mid \sigma(0) = 0\}$. Notice also that $\CC\rtimes\mathrm{Iso}(\CC)_0 = \mathrm{Iso}(\CC)$
where $\CC$ is identified with the subgroup of translations.
Let us consider the two translations $\tau_1,\tau_2\co\CC\to \CC$ given by
\[
\tau_1 \co z\mapsto   z + (1+\eta) \quad\text{ and }\quad 
\tau_2 \co z\mapsto  z + (1+\bar\eta) = z + (2 -\eta)
\]
where $\eta$ is a primitive $6$-th root of unity, $\eta^2=\omega$.
An elementary calculation shows that
\begin{alignat*}{2}
r(s)\,\tau_1\,r(s)^{-1} &= \tau_2, \quad& \quad  r(a)\,\tau_1\,r(a)^{-1} &= \tau_2^{-1}\\
r(s)\,\tau_2 \, r(s)^{-1} &= \tau_1, &  r(a)\,\tau_2\,r(a)^{-1} &= \tau_1\tau_2^{-1}\,.
\end{alignat*}
Finally we define the group $\Gamma = (\ZZ\times\ZZ)\rtimes \mathrm{Dic}_3$
where $\mathrm{Dic}_3 = \langle a, s\mid a^6, s^2=a^3, sas^{-1} = a^{-1} \rangle$
is the binary dihedral group of order $12$. The group $\Gamma$ has the following presentation:
\[ 
\langle a,s,t_1,t_2 \mid a^6, s^2=a^3, sas^{-1} = a^{-1}, [t_1,t_2], st_1s^{-1}=t_2, st_2s^{-1}=t_1,
at_1a^{-1}= t_2^{-1}, at_2a^{-1}= t_1t_2^{-1}\rangle\,.
\]
A homomorphism $\rho\co\Gamma\to\SL(2)$ is given by 
\[ 
\rho(a) = \big(\begin{smallmatrix} \eta &0\\0&\bar\eta\end{smallmatrix}\big),\quad
\rho(s) = \big(\begin{smallmatrix} 0 &1\\-1&0\end{smallmatrix}\big), \text { and }
\rho(t_1) = \rho(t_2)=I_2\,.
\]
An elementary but tedious calculation shows that  $z\co\Gamma\to\mathfrak{sl}(2)$ given by
\[ 
z(a) = z(s) = 0, \text { and }
z(t_1) =\big(\begin{smallmatrix} 0 & 1+\eta \\ -(1+\bar\eta)&0\end{smallmatrix}\big),\quad
z(t_2) = \big(\begin{smallmatrix} 0 & 1+\bar\eta \\-(1+\eta)&0\end{smallmatrix}\big)
\]
is a derivation.
The derivation $z$ is non-principal since for each principal derivation
$b\co\Gamma\to\mathfrak{sl}(2)$ we have $b(t_1) = b(t_2)=0$ since
$\rho(t_1)=\rho(t_2)=I_2$ are trivial. Hence, $H^1(\Gamma,\mathfrak{sl}(2)_{\Ad \rho})$ is non-trivial.
More precisely, $H^1(\Gamma,\mathfrak{sl}(2)_{\Ad\rho})\cong\CC$ is generated by the cohomology class of $z$. 

On the other hand, it can be shown directly that each representation of $\Gamma$ into $\SL(2)$ factors through the finite group 
$\Gamma' = \Gamma/ \llangle t_1t_2\rrangle \cong (\ZZ/3\ZZ) \rtimes \mathrm{Dic}_3$.
More generally, this follows also from \cite[Example~2.10]{LM85}.
Therefore, $X_2(\Gamma)$ is finite and $\chi_{\rho}\in X_2(\Gamma)$ is an isolated point.
It follows that the coordinate ring 
$\mathcal{O}(\mathcal{R}_2(\Gamma))$ is non-reduced.

More concretely, we can use SageMath \cite{sage} to compute the ideal $I$ generated by algebraic equations of the $\SL(2)$-representation variety $R_2(\Gamma)\subset\CC^{16}$ \cite{worksheet}. 
It turns out that $\sqrt{I}$ is generated by $I$ and the equations given by the 
relation $t_1t_2=1$. Therefore, we obtain 
$\mathcal{O}(\mathcal{R}_2(\Gamma))_\mathit{red} \cong \mathcal{O}(\mathcal R_2(\Gamma'))\cong  \mathcal{O}( R_2(\Gamma'))$.

If we impose the corresponding relations i.e.\ if we consider 
the representation $\rho'\co\Gamma\to\SL(2)$ given by 
\[ 
\rho'(a) = \big(\begin{smallmatrix} \eta &0\\0&\bar\eta\end{smallmatrix}\big),\quad
\rho'(s) = \big(\begin{smallmatrix} 0 &1\\-1&0\end{smallmatrix}\big), \quad
\rho(t_1) =  \big(\begin{smallmatrix} \omega &0\\0&\bar\omega\end{smallmatrix}\big) \text{ and } 
\rho(t_2)= \big(\begin{smallmatrix} \bar\omega &0\\0&\omega\end{smallmatrix}\big)
\]
then we obtain $H^1(\Gamma,\mathfrak{sl}(2)_{\Ad\rho'})=0$.
\end{example}

\begin{remark}
M.~Kapovich and J.~Millson proved in \cite{KM13} that there are essentially no restrictions on the \emph{local} geometry of representation schemes of 3-manifold groups to $\SL_2(\CC)$. 
\end{remark}

\section{Deformations of representations}

One way to prove that a certain representation $\rho\in R_n(\Gamma)$ is a smooth point of the representation variety is to show that  every cocycle $u\in Z^1(\Gamma;\sln_{\Ad\rho})$ is integrable
(see Lemma~\ref{lem:smoothness}).
In order to do this, we use the classical approach, i.e.\ we first solve the corresponding formal problem, and then apply  a  theorem of Artin \cite{Artin1968}. 

The formal deformations of a representation $\rho\co\Gamma\to \SL_n(\CC)$ are in general determined by an infinite sequence of obstructions (see \cite{Goldman1984, BenAbdelghani2000,Heusener-Porti-Suarez2001}).
In what follows we let
$C^1(\Gamma;\sln_{\Ad\rho}):=\{ c\co\Gamma\to\sln_{\Ad\rho}\}$ denote the $1$-cochains of $\Gamma$ with coefficients in $\sln$ (see \cite[p.59]{Bro82}).

Let $\rho\co\Gamma\to \SL(n)$ be a representation. A formal deformation of $\rho$ is a homomorphism 
$\rho_\infty\co\Gamma\to \SL_n(\CC\llbracket  t \rrbracket)$

\[\rho_\infty(\gamma)=\exp\left(\sum_{i=1}^{\infty}t^iu_i(\gamma)\right)\rho(\gamma)\, ,
\quad u_i\in C^1(\Gamma;\sln) \]
such that $\mathrm{ev}_0\circ\rho_\infty=\rho$. Here 
$\mathrm{ev}_0\co \SL_n(\CC\llbracket  t \rrbracket)\to \SL_n(\CC)$ is the evaluation homomorphism at $t=0$, and 
$\CC\llbracket t \rrbracket$ denotes the ring of formal power series.

 We will say that $\rho_\infty$ is a formal deformation up to  order $k$ of $\rho$ if $\rho_\infty$ is a homomorphism modulo $t^{k+1}$.

An easy calculation gives that $\rho_\infty$ is a homomorphism up to first order if and only if 
$u_1\in Z^1(\Gamma ; \sln_{\Ad\rho})$ is a cocycle. We call a cocycle 
$u_1\in Z^1(\Gamma;\sln_{\Ad\rho})$ \emph{formally integrable} if there is a formal deformation of $\rho$ with leading term $u_1$.

\begin{lemma}
Let $u_1,\ldots,u_k\in C^1(\Gamma;\sln)$ such that
\[\rho_k(\gamma)=\exp\left(\sum_{i=1}^k t^iu_i(\gamma)\right)\rho(\gamma)\]
is a homomorphism into $\SL_n\big(\CC\llbracket t\rrbracket/(t^{k+1})\big)$. 
Then there exists an obstruction class 
$\zeta_{k+1}:=\zeta_{k+1}^{(u_1,\ldots,u_k)}\in H^2(\Gamma, \sln_{\Ad\rho})$ with the following properties:
\begin{enumerate}[(i)]
\item  There is a cochain $u_{k+1}\co\Gamma\to \sln$ such that
\[\rho_{k+1}(\gamma)=\exp\left(\displaystyle\sum_{i=1}^{k+1}t^iu_i(\gamma)\right)\rho(\gamma)\]
is a homomorphism modulo $t^{k+2}$ if and only if $\zeta_{k+1}=0$.
\item  The obstruction $\zeta_{k+1}$ is natural, i.e. if $f\co\Gamma_1\to\Gamma$ is a homomorphism then $f^*\rho_k:=\rho_k\circ f$ is also a homomorphism modulo $t^{k+1}$ and $f^*(\zeta_{k+1}^{(u_1,\ldots,u_k)})=\zeta_{k+1}^{(f^*u_1,\ldots,f^*u_k)}\in 
H^2(\Gamma_1;\sln_{\Ad f^*\rho})$.
\end{enumerate}
\end{lemma}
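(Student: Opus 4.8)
The plan is to run the standard obstruction-theoretic argument: write the sought extension $\rho_{k+1}$ as a multiplicative $t^{k+1}$-perturbation of $\rho_k$, package the failure of $\rho_k$ to be a homomorphism modulo $t^{k+2}$ into a single $\sln$-valued $2$-cochain, and identify its cohomology class as the obstruction. First I would fix the shape of $\rho_{k+1}$. Since $t^{k+1}u_{k+1}(\gamma)$ multiplied by any term of positive $t$-order is $O(t^{k+2})$, the Baker--Campbell--Hausdorff correction drops out and
\[
\rho_{k+1}(\gamma)=\bigl(I_n+t^{k+1}u_{k+1}(\gamma)\bigr)\rho_k(\gamma)\pmod{t^{k+2}},
\]
so choosing $u_{k+1}$ amounts exactly to choosing a multiplicative correction at order $t^{k+1}$. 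Because $\rho_k$ is a homomorphism modulo $t^{k+1}$, the product $\rho_k(\gamma_1)\rho_k(\gamma_2)\rho_k(\gamma_1\gamma_2)^{-1}$ equals $I_n$ modulo $t^{k+1}$, and I may set
\[
\rho_k(\gamma_1)\rho_k(\gamma_2)\rho_k(\gamma_1\gamma_2)^{-1}=I_n+t^{k+1}\,\zeta(\gamma_1,\gamma_2)+O(t^{k+2}).
\]
Taking determinants (each factor lies in $\SL_n$) forces $\tr\zeta(\gamma_1,\gamma_2)=0$, so $\zeta\co\Gamma\times\Gamma\to\sln$ is a genuine $2$-cochain in $C^2(\Gamma;\sln_{\Ad\rho})$.

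The key step is to prove $\zeta\in Z^2(\Gamma;\sln_{\Ad\rho})$, which I would extract from associativity of the triple product $\rho_k(\gamma_1)\rho_k(\gamma_2)\rho_k(\gamma_3)$. Writing $F(\gamma_1,\gamma_2)=I_n+t^{k+1}\zeta(\gamma_1,\gamma_2)$ for the relevant truncation of the discrepancy, I would evaluate the triple product in the two bracketings $(\gamma_1\gamma_2)\gamma_3$ and $\gamma_1(\gamma_2\gamma_3)$. In the second bracketing one conjugates $F(\gamma_2,\gamma_3)$ past $\rho_k(\gamma_1)$; as the correction already carries a factor $t^{k+1}$, one may replace $\rho_k(\gamma_1)$ by $\rho(\gamma_1)$ modulo $t^{k+2}$, so the conjugation becomes the module action $\gamma_1\cdot\zeta(\gamma_2,\gamma_3)=\Ad_{\rho(\gamma_1)}\zeta(\gamma_2,\gamma_3)$. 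Comparing the $t^{k+1}$-coefficients of the two bracketings yields precisely
\[
\gamma_1\cdot\zeta(\gamma_2,\gamma_3)-\zeta(\gamma_1\gamma_2,\gamma_3)+\zeta(\gamma_1,\gamma_2\gamma_3)-\zeta(\gamma_1,\gamma_2)=0,
\]
the $2$-cocycle identity. I then define $\zeta_{k+1}=\zeta_{k+1}^{(u_1,\ldots,u_k)}$ to be the class $[\zeta]\in H^2(\Gamma;\sln_{\Ad\rho})$.

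For part (i) I would expand the homomorphism condition for $\rho_{k+1}$ directly. Substituting the multiplicative form above and again truncating every order-$t^{k+1}$ term by replacing $\rho_k$ with $\rho$, the requirement $\rho_{k+1}(\gamma_1)\rho_{k+1}(\gamma_2)\rho_{k+1}(\gamma_1\gamma_2)^{-1}\equiv I_n\pmod{t^{k+2}}$ collapses to
\[
u_{k+1}(\gamma_1)+\gamma_1\cdot u_{k+1}(\gamma_2)-u_{k+1}(\gamma_1\gamma_2)+\zeta(\gamma_1,\gamma_2)=0,
\]
that is $\zeta=-\delta u_{k+1}$, where $\delta$ is the inhomogeneous coboundary matching the paper's sign convention (so $1$-cocycles satisfy $u(\gamma_1\gamma_2)=u(\gamma_1)+\gamma_1\cdot u(\gamma_2)$). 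Hence a suitable $u_{k+1}$ exists exactly when $\zeta$ is a coboundary, i.e.\ when $\zeta_{k+1}=0$ in $H^2$. Part (ii) is then pure substitution: for $f\co\Gamma_1\to\Gamma$ one has $f^*\rho_k=\rho_k\circ f$ and $f^*u_i=u_i\circ f$, and because $f$ is a homomorphism the defining product for the pulled-back deformation is the original product evaluated at $(f\gamma_1,f\gamma_2)$; reading off the $t^{k+1}$-coefficient gives $\zeta^{(f^*u_1,\ldots,f^*u_k)}=f^*\zeta^{(u_1,\ldots,u_k)}$ at the cochain level, whence $f^*\bigl(\zeta_{k+1}^{(u_1,\ldots,u_k)}\bigr)=\zeta_{k+1}^{(f^*u_1,\ldots,f^*u_k)}$ in $H^2(\Gamma_1;\sln_{\Ad f^*\rho})$.

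The main obstacle is the bookkeeping in the cocycle identity of the key step: one must chase the $t^{k+1}$-coefficients through both associativity bracketings and justify, at each occurrence, that a factor of $\rho_k$ standing next to something already of order $t^{k+1}$ may be truncated to $\rho$, thereby converting conjugation into the $\Ad\rho$-action. Once this is set up cleanly everything else is routine: the determinant computation giving $\tr\zeta=0$, the passage from ``$\zeta$ is a coboundary'' to ``$\zeta_{k+1}=0$'', and naturality all require no further ideas.
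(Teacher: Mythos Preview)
Your argument is correct and is precisely the standard obstruction-theoretic construction that the paper invokes by citing Proposition~3.1 of \cite{Heusener-Porti-Suarez2001}; the paper itself gives no details beyond noting that one replaces $\SL(2)$ and $\sln[2]$ by $\SL(n)$ and $\sln$. Your write-up supplies exactly those details, including the BCH reduction, the trace computation placing $\zeta$ in $\sln$, the associativity check for the cocycle identity, and the coboundary equation $\zeta=-\delta u_{k+1}$, so there is nothing to add.
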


\begin{proof}
The proof is completely analogous to the proof of 
Proposition~3.1 in \cite{Heusener-Porti-Suarez2001}. 
We replace $\SL(2)$ and $\sln[2]$ by 
$\SL(n)$ and $\sln$ respectively. 
\end{proof}

The following result streamlines the arguments given in \cite{Heusener-Porti2005} and
\cite{AHJ2010}. It is a slight generalization of Proposition~3.3 in \cite{HM14}.

\begin{proposition}\label{prop:smoothpoint}
Let $M$ be a connected, compact, orientable $3$-manifold with toroidal boundary
$\partial M = T_1\cup\cdots\cup T_k$,
and let
$\rho\co\pi_1M\to \SL(n)$ be a representation.

If $\dim H^1(\pi_1M; \sln_{\Ad\rho}) =k(n-1)$ then $\rho$   is a smooth point of the 
$\SL(n)$-representation variety $R_n(\pi_1M)$. Moreover,
$\rho$ is contained in a unique component of dimension
$n^2-1+k(n-1) - \dim H^0(\pi_1M;\sln_{\Ad\rho})$.
\end{proposition}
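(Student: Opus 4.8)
The plan is to prove that $\rho$ is a \emph{regular} point of $R_n(\pi_1M)$, i.e.\ that $\dim_\rho R_n(\pi_1M)=\dim Z^1(\pi_1M;\sln_{\Ad\rho})$; once this is shown, Lemma~\ref{lem:smoothness} delivers at one stroke that $\rho$ is smooth and lies on a unique component of dimension $\dim Z^1$. Since $\dim B^1(\pi_1M;\sln_{\Ad\rho})=(n^2-1)-\dim H^0(\pi_1M;\sln_{\Ad\rho})$ and $\dim H^1=k(n-1)$ by hypothesis, one gets $\dim Z^1=\dim B^1+\dim H^1=n^2-1+k(n-1)-\dim H^0$, which is exactly the asserted dimension. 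By \eqref{eq:tangent} the bound $\dim_\rho R_n\le\dim Z^1$ is free, so the whole matter reduces to the reverse inequality, equivalently to showing that every cocycle $u_1\in Z^1$ is integrable. Following the classical scheme I would build a formal deformation $\rho_\infty$ with leading term $u_1$ order by order using the preceding lemma on obstruction classes, and then invoke Artin's theorem to replace the formal solution by a genuine path of representations. Thus everything collapses to proving that all obstruction classes $\zeta_{k+1}\in H^2(\pi_1M;\sln_{\Ad\rho})$ vanish.

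The cohomological core uses that $\sln_{\Ad\rho}$ is self-dual, via the $\Ad$-invariant nondegenerate trace form $\langle X,Y\rangle=\tr(XY)$. Since $\chi(M)=\tfrac12\chi(\partial M)=0$ and $H^3(\pi_1M;\sln_{\Ad\rho})\cong H_0(M,\partial M;\sln_{\Ad\rho})=0$, the Euler characteristic gives $\dim H^2=\dim H^1-\dim H^0$. Next I would use the \emph{half lives, half dies} principle: the image of the restriction $\iota^*\co H^1(M;\sln_{\Ad\rho})\to H^1(\partial M;\sln_{\Ad\rho})$ is Lagrangian, so $\dim\image\iota^*=\tfrac12\sum_i\dim H^1(T_i;\sln_{\Ad\rho})=\sum_i\dim H^0(T_i;\sln_{\Ad\rho})$, using $\dim H^1(T_i)=2\dim H^0(T_i)$ for a torus. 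As the centralizer in $\sln$ of the abelian group $\rho(\pi_1T_i)$ has dimension at least $n-1$ (the rank of $\sln$), this forces $\dim H^1(M)\ge\sum_i\dim H^0(T_i)\ge k(n-1)$, with equality imposed by the hypothesis. Hence each $\dim H^0(T_i;\sln_{\Ad\rho})=n-1$ and $\iota^*$ is injective on $H^1$. A short chase in the long exact sequence of $(M,\partial M)$, feeding in $\dim H^2(M,\partial M)=\dim H^1(M)=k(n-1)$ from Poincaré--Lefschetz duality, then shows $j^*\co H^2(M,\partial M)\to H^2(M)$ vanishes, so that $\iota^*\co H^2(\pi_1M;\sln_{\Ad\rho})\to\bigoplus_i H^2(\pi_1T_i;\sln_{\Ad\rho})$ is \emph{injective}.

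With this injectivity the obstructions are killed from the boundary. By naturality of $\zeta_{k+1}$ (property (ii) of the preceding lemma), $\iota_i^*\zeta_{k+1}$ is precisely the obstruction to extending the restricted deformation $\iota_i^*\rho_k$ over $T_i$. Because $\dim H^0(T_i;\sln_{\Ad\rho})=n-1$ is minimal, $\dim Z^1(\pi_1T_i;\sln_{\Ad\rho})=n^2-1+\dim H^0=n^2+n-2$ equals the dimension of the commuting variety $R_n(\ZZ^2)$ at $\rho|_{\pi_1T_i}$, so $\rho|_{\pi_1T_i}$ is a scheme smooth point; the torus deformation problem is therefore unobstructed and $\iota_i^*\zeta_{k+1}=0$ for every $i$. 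Injectivity of $\iota^*$ forces $\zeta_{k+1}=0$, and by induction every cocycle is formally integrable, so the complete local ring of $\mathcal R_n(\pi_1M)$ at $\rho$ is formally smooth of dimension $\dim Z^1$; Artin's theorem upgrades this to smoothness of $R_n(\pi_1M)$ at $\rho$ and gives $\dim_\rho R_n=\dim Z^1$, finishing the proof. The hard part will be the duality bookkeeping of the middle paragraph: one must extract, from the single numerical hypothesis $\dim H^1=k(n-1)$, \emph{both} the scheme smoothness of $\rho$ on each boundary torus \emph{and} the injectivity of $\iota^*$ on $H^2$, and it is the interplay of Poincaré duality, half-lives-half-dies, and the rank bound $\dim H^0(T_i)\ge n-1$ that makes these two consequences appear together.
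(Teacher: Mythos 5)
Your proposal follows essentially the same route as the paper's proof: the Poincar\'e--Lefschetz ``half lives, half dies'' squeeze combined with the bound $\dim H^0(T_i;\sln_{\Ad\rho})\ge n-1$ forces both the regularity of the boundary restrictions and the injectivity of $\iota^*$ on $H^2$, the obstruction classes are then killed from the boundary by naturality, and Artin's theorem together with Lemma~\ref{lem:smoothness} finishes exactly as in the text. The only detail you gloss over, which the paper spells out, is the passage from $H^2(M;\sln_{\Ad\rho})$ to the group cohomology $H^2(\pi_1M;\sln_{\Ad\rho})$ where the obstructions actually live (the natural comparison map is injective in degree $2$, and $M$ need not be aspherical here).
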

\begin{proof}
First we will show that the map 
$\iota^*\co H^2(\pi_1 M;\sln_{\Ad\rho})\to H^2(\pi_1 \partial M;\sln_{\Ad\rho})$ induced by the inclusion
$\iota\co \partial M\hookrightarrow M$ is injective. 

Recall that for any CW-complex $X$ with
$\pi_1(X)\cong \pi_1(M)$ and for any $\pi_1 M$-module $A$ 
there are natural morphisms $H^i(\pi_1 M;A)\to H^i( X ;A)$ which are
isomorphisms
for $i=0,1$ and an injection for $i=2$ (see \cite[Lemma 3.3]{Heusener-Porti2005}).
Note also that $T_j\cong S^1\times S^1$ is aspherical and hence
$H^*(\pi_1 T_j;A)\to H^*( T_j ;A)$ is an isomorphism.

For every representation $\varrho\in R_n(\ZZ\oplus\ZZ)$ we have
\begin{equation}\label{eq:boundary} 
\dim H^0(\ZZ\oplus\ZZ;\sln_{\Ad\varrho}) = \frac12 \dim H^1(\ZZ\oplus\ZZ;\sln_{\Ad\varrho})\geq n-1\,,
\end{equation}
and  $\varrho\in R_n(\ZZ\oplus\ZZ)$ is  regular if
and only if equality holds in~\eqref{eq:boundary}. A prove of this statement can be found in the 
of Proposition~3.3 in \cite{HM14}.

Now, the exact cohomology sequence of the pair $(M,\partial M)$ gives 
\begin{multline*}
\to H^1(M,\partial M; \sln_{\Ad\rho})\\ \to
H^1(M; \sln_{\Ad\rho})\xrightarrow{\alpha} H^1(\partial M; \sln_{\Ad\rho}) 
\xrightarrow{\beta} H^2(M,\partial M; \sln_{\Ad\rho})\\
\to H^2(M ; \sln_{\Ad\rho})\xrightarrow{\iota^*}H^2(\partial M ; \sln_{\Ad\rho})\to  
H^3(M,\partial M; \sln_{\Ad\rho})\to 0\,.
\end{multline*}
Poincar\'e-Lefschetz duality implies that $\alpha$ and $\beta$ are dual to each other.
Therefore, we have $\rk\alpha=\rk\beta$, and from the exactness it follows that
$2\rk\alpha = \dim H^1(\partial M; \sln_{\Ad\rho}) $.
Moreover, we have $H^1(\partial M; \sln_{\Ad\rho}) \cong\bigoplus_{j=1}^k H^1(T_j; \sln_{\Ad\iota_j^*\rho})$ where $\iota_j\co T_j\to M$ denotes the inclusion. Equation~\eqref{eq:boundary} implies that $\dim H^0(T_j; \sln_{\Ad\varrho})\geq n-1$ for all $\varrho\in R_n(\pi_1T_j)$. Hence
\begin{align}
k(n-1) & =\dim H^1(M; \sln_{\Ad\rho})\geq \mathrm{rk} (\alpha) =
\frac 1 2 \dim H^1(\partial M; \sln_{\Ad\rho}) \label{eq:poincare}\\
& = \sum_{j=1}^k \frac 1 2 \dim H^1(T_j; \sln_{\Ad\iota_j^*\rho})
= \sum_{j=1}^k\dim H^0(T_j; \sln_{\Ad\iota_j^*\rho}) \geq k(n-1)\,. \notag
\end{align}
Therefore, equality holds everywhere in \eqref{eq:poincare}. This implies that $\alpha$ is injective, hence $\beta$ is surjective, and
\[\iota^*\co H^2(M ; \sln_{\Ad\rho})\to H^2(\partial M ; \sln_{\Ad\rho})\cong \bigoplus_{j=1}^k H^2(T_j; \sln_{\Ad\iota_j^*\rho})\] 
is injective. Moreover, Equation~\eqref{eq:poincare} implies that
$\dim H^0(T_j;\sln_{\Ad\iota_j^*\rho})=n-1$ holds for all $j=1,\ldots,k$, and consequently  
$\iota_j^*\rho=\rho\circ \iota_{j\#} \in R_n(\pi_1T_j)$ is regular. 
We obtain the following commutative diagram:
$$
\begin{CD}
    H^2(M;\sln_{\Ad\rho}) @>{\iota^*}>> H^2(
\partial M;\sln_{\Ad\rho}) \\
    @AAA         @AA{\cong}A \\
    H^2(\pi_1 M;\sln_{\Ad\rho}) @>{\oplus_{j=1}^k\iota_j^*}>>
    \bigoplus_{j=1}^k H^2(\pi_1{T_j};\sln_{\Ad \iota_j^*\rho})\,.
\end{CD} $$

In order to prove that $\rho\in R_n(\pi_1M)$ is regular, we first show that all cocycles in $Z^1(\pi_1 M, \sln_{\Ad\rho})$ are formally integrable. 
We will prove that all obstructions vanish, by using the fact that the obstructions vanish on the boundary.
 Let 
$u_1,\ldots,u_k\co\pi_1M\to\sln$ be given such that 
\[\rho_k(\gamma)=\exp\left(\sum_{i=1}^kt^iu_i(\gamma)\right)\rho(\gamma)\]
 is a homomorphism modulo $t^{k+1}$. Then the restriction
 $\iota_j^*\rho_k\co\pi_1T_j\to \SL_n(\CC\llbracket t\rrbracket)$ is also a formal deformation of order $k$.
Since $\iota_j^*\rho$ is a regular point of the representation variety $R_n(\pi_1T_j)$, the formal implicit function theorem gives that 
$\iota_j^*\rho_k$ extends to a formal deformation of order $k+1$ (see \cite[Lemma~3.7]{Heusener-Porti-Suarez2001}). Therefore, we have that
\[0=\zeta_{k+1}^{(\iota_j^*u_1,\ldots,\iota_j^*u_k)}=\iota_j^*\zeta_{k+1}^{(u_1,\ldots,u_k)}\]
Now, $\oplus_{j=1}^k\iota_j^*$ is injective and the obstruction $\zeta_{k+1}^{(u_1,\ldots,u_k)}$ vanishes.

Hence all cocycles in $Z^1(\Gamma, \sln_{\Ad\rho})$ are formally integrable. By applying Artin's theorem \cite{Artin1968} we obtain from a formal deformation of $\rho$ a convergent deformation (see \cite[Lemma~3.3]{Heusener-Porti-Suarez2001} or \cite[\S~4.2]{BenAbdelghani2000}).

Thus $\rho\in R_n(\pi_1M)$ is a regular point, and
$\dim H^1(\pi_1M;\sln_{\Ad\varrho})=k(n-1)$. The exactness of
\[0\to H^0(\pi_1M;\sln_{\Ad\varrho})\to\sln \to B^1(\pi_1M;\sln_{\Ad\varrho})\to 0,\] 
and the regularity of $\rho\in R_n(\pi_1M)$ imply:
\begin{align*}
\dim_\rho R_n(\pi_1M) &= \dim Z^1(\pi_1M;\sln_{\Ad\varrho}) \\
&= \dim H^1(\pi_1M;\sln_{\Ad\varrho}) + \dim B^1(\pi_1M;\sln_{\Ad\varrho}) \\ &=k(n-1)+n^2-1 - \dim H^0(\pi_1M;\sln_{\Ad\varrho})\,.
\end{align*}
Finally, the proposition follows from Lemma~\ref{lem:smoothness}.
\end{proof}

\begin{definition}\label{def:infinites-regular}
Let $M$ be a connected, compact, orientable $3$-manifold with toroidal boundary
$\partial M = T_1\cup\cdots\cup T_k$.
We  call  a representation $\rho\co\pi_1M\to\mathrm{SL}_n(\CC) $
\emph{infinitesimally regular} if 
 $\dim H^1(\pi_1M ; \mathfrak{sl}(n)_{\Ad\rho})=k(n-1)$.
\end{definition}

\begin{remark}\label{rem:infinites-regular}
It follows from Proposition~\ref{prop:smoothpoint} that
infinitesimally regular representations are regular points on the representation variety.
\end{remark}

\begin{example}\label{ex:diag}
Let $\Gamma_k$ be a knot group and let $D=\mathrm{diag}(\lambda_1,\ldots,\lambda_n)\in\SL(n)$ be a diagonal matrix.
We define the diagonal representation $\rho_D$  by $\rho_D(\gamma)= D^{\varphi(\gamma)}$. 
Now, $\rho_D$ is the direct sum of the one-dimensional representations $\lambda_i^\varphi$, and
the $\Gamma_k$-module $\sln_{\Ad\rho_D}$ decomposes as:
\[
\sln_{\Ad\rho_A} =  \bigoplus_{i\neq j} \CC_{\lambda_i/\lambda_j}\oplus \CC^{n-1} \,.
\]
Now, for all $\alpha\in\CC^*$ we have $H^1(\Gamma_k; \CC_\alpha) =0$ if and only if $\alpha\neq1$ and $\Delta_k(\alpha)\neq 0$
(see \cite[Lemma~2.3]{BenAH15}).
Here, $\Delta_k(t)$ denotes the Alexander polynomial of the knot $k$.
Hence, $\rho_D$ is infinitesimally regular  if and only if $\lambda_i\neq\lambda_j$ for $i\neq j$ and $\Delta_k(\lambda_i/\lambda_j)\neq 0$ for $1\leq i,j\leq n$.
In this case it follows that $\dim H^1(\Gamma_k; \sln_{\Ad\rho_D}) = n-1$, and $\rho_D\in R_n(\Gamma_k)$ is a regular point. The representation  $\rho_D$ is contained in an unique component of dimension
$n^2-1$. This component is exactly the component of abelian representations
$\varphi^*\co R_n(\mathbb{Z})\hookrightarrow R_n(\Gamma_k)$ (see Example~\ref{ex:central}).
\end{example}

\section{Existence of irreducible representations of knot groups}
Let $k\subset S^3$ be a knot, and let $\Gamma_k$ be the knot group.
Given representations of  $\Gamma_k$ into $\SL(2)$ there are several constructions which give higher dimensional representations.
The most obvious is probably the direct sum of two representatios.

\subsection{Deformations of the direct sum of two representations}\label{secHP15}
Starting from two representations  $\alpha\co\Gamma_k\to\mathrm{SL}_a(\CC)$ and  
$\beta\co\Gamma_k\to\mathrm{SL}_b(\CC)$ such that $a+b=n$, we obtain a family of representations
$\rho_\lambda\in R_n(\Gamma_k)$, $\lambda\in\mathbb{C}^*$, by
$\rho_\lambda = (\lambda^{b\varphi}\otimes \alpha)\oplus (\lambda^{-a\varphi}\otimes \beta)\in R_n(\Gamma_k)$ i.e.\ for all $\gamma\in\Gamma_k$ 
\begin{equation}\label{eq:n=gen} 
\rho_\lambda(\gamma) = 
\begin{pmatrix}
\lambda^{b\varphi(\gamma)} \alpha(\gamma) & \mathbf{0}\\
\mathbf{0} & \lambda^{-a\varphi(\gamma)} \beta(\gamma) 
\end{pmatrix}\,.
\end{equation}
Recall that $\lambda^\varphi\co\Gamma_k\to\mathbb{C}^*$ is given by 
$\gamma\mapsto\lambda^{\varphi(\gamma)}$.

Throughout this section we will assume that $\alpha$ and $\beta$ are both irreducible and infinitesimal regular.

The natural question which arises is if 
 $\rho_\lambda$ can be deformed to irreducible representations, and if this would be possible what could we say about the local structure of $X_n(\Gamma_k)$ at $\chi_{\rho_\lambda}$?

\subsubsection{The easiest case}
%
A very special case is $\alpha=\beta\co\Gamma_k\to\mathrm{SL}_1(\CC)=\{1\}$ are trivial.
Then $\rho_\lambda = \lambda^{\varphi}\oplus \lambda^{-\varphi}\in R_2(\Gamma_k)$ i.e.\ for all $\gamma\in\Gamma_k$ 
\begin{equation}\label{eq:n=2} 
\rho_\lambda(\gamma) = 
\begin{pmatrix}
\lambda^{\varphi(\gamma)} & 0\\
0 & \lambda^{-\varphi(\gamma)} 
\end{pmatrix}\,.
\end{equation}

\begin{example}\label{ex:trefoil}
Let us consider the trefoil knot $k=3_1$.
The knot group of the trefoil knot is given by
\[\Gamma_{3_1} = \langle S,T \mid STS=TST\rangle=\langle x,y\mid x^2=y^3\rangle\]
where $x=STS$ and $y=TS$. A meridian is $m=S=x y^{-1}$.
For every irreducible representation $\rho\in R_2(\Gamma_{3_1})$ there exists a unique 
$s\in\mathbb{C}$ such that $\rho\sim\alpha_s$, where
\[
\alpha_s(x) = \begin{pmatrix} i & 0\\ s & -i \end{pmatrix} \text{ and }
\alpha_s(y) = \begin{pmatrix} \eta & \bar\eta-\eta \\ 0 & \bar\eta \end{pmatrix}\,,
\]
and $\eta^2-\eta+1 = 0$ is a primitive $6$-th root of unity. Moreover,  
$\alpha_s$ is irreducible if and only if  $s\neq 0, 2i$
(see \cite[Lemma~9.1]{HP15} for a proof).

Now, if $s=0$ then the one parameter group
$P(t)=\mathrm{diag}(t,t^{-1})$, $t\in\CC^*$, verifies that 
$\lim_{t\to 0}P(t).\alpha_0$ exists, and is the diagonal representation 
$\rho_{\zeta}$ where $\zeta = i\eta$ is a primitive $12$-th root of unity.
If $s=2i$ we can take 
$P(t)=
\big(\begin{smallmatrix} 
t^{-1} & -t^{-1}\\ 
t & 0 
\end{smallmatrix}\big)$, $t\in\CC^*$, and we obtain
$\lim_{t\to 0}P(t).\alpha_{2i} = \rho_{-\zeta}$.
Therefore, the two diagonal representations $\rho_{\pm\zeta}$ 
are limit of irreducible representations. Notice also that $(\pm\zeta)^2=\eta$ is a primitive $6$-th roof of unity and that $\Delta_{3_1} (\eta)=0$.
\end{example}

This examples shows a general phenomena which goes back to work of E.~Klassen
\cite{Klassen91}.
\begin{theorem}\label{thm:Klassen}
If the diagonal representation $\rho_\lambda\in R_2(\Gamma_k)$ can be deformed to irreducible representations then 
$\Delta_k(\lambda^2)=0$.
\end{theorem}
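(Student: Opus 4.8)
The plan is to prove the contrapositive: if $\Delta_k(\lambda^2)\neq 0$, then $\rho_\lambda$ is a smooth point of $R_2(\Gamma_k)$ lying in no component other than the component of abelian representations, and hence cannot be a limit of irreducible representations. First I would decompose the $\Gamma_k$-module $\mathfrak{sl}(2)_{\Ad\rho_\lambda}$. Since $\rho_\lambda=\lambda^\varphi\oplus\lambda^{-\varphi}$ is diagonal, conjugation by $\rho_\lambda(\gamma)=\operatorname{diag}(\lambda^{\varphi(\gamma)},\lambda^{-\varphi(\gamma)})$ acts on the standard basis $H,E,F$ of $\mathfrak{sl}(2)$ through the characters $1,\lambda^{2\varphi},\lambda^{-2\varphi}$, so that
\[
\mathfrak{sl}(2)_{\Ad\rho_\lambda}\cong \CC_{\lambda^2}\oplus\CC_1\oplus\CC_{\lambda^{-2}}
\]
in the notation of Example~\ref{ex:diag}.

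Next I would compute cohomology summand by summand. The diagonal summand always gives $H^1(\Gamma_k;\CC_1)=\hom(\Gamma_k,\CC)\cong\CC$, reflecting the deformations of $\lambda$ within the abelian representations. For the two off-diagonal summands I invoke the criterion recalled in Example~\ref{ex:diag}, namely $H^1(\Gamma_k;\CC_\alpha)=0$ if and only if $\alpha\neq 1$ and $\Delta_k(\alpha)\neq0$. Combined with the symmetry $\Delta_k(t)\doteq\Delta_k(t^{-1})$, the hypothesis $\Delta_k(\lambda^2)\neq0$ yields $\Delta_k(\lambda^{-2})\neq0$, so as long as $\lambda^2\neq1$ I obtain $H^1(\Gamma_k;\CC_{\lambda^2})=H^1(\Gamma_k;\CC_{\lambda^{-2}})=0$ and therefore $\dim H^1(\Gamma_k;\mathfrak{sl}(2)_{\Ad\rho_\lambda})=1$. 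The degenerate case $\lambda^2=1$ is central and is already handled by Example~\ref{ex:central}, which exhibits $\rho_\lambda$ as a smooth point of the abelian component; so I may assume $\lambda^2\neq1$.

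With the count $\dim H^1=1=k(n-1)$ in hand for $M=C_k$ (a single boundary torus, $k=1$, $n=2$), I would apply Proposition~\ref{prop:smoothpoint}: $\rho_\lambda$ is a smooth point of $R_2(\Gamma_k)$ contained in a unique component, of dimension $n^2-1+k(n-1)-\dim H^0=3+1-1=3$, where $\dim H^0=1$ because only the Cartan line is $\Ad\rho_\lambda$-invariant when $\lambda^2\neq1$. On the other hand $\rho_\lambda$ is abelian, so it lies on the image of the closed embedding $\varphi^*\co R_2(\ZZ)\hookrightarrow R_2(\Gamma_k)$, an irreducible $3$-dimensional subvariety (Example~\ref{ex:central}). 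Since this subvariety is irreducible of dimension $3$ and passes through the smooth point $\rho_\lambda$, it must coincide with the unique $3$-dimensional component through $\rho_\lambda$; thus a whole neighborhood of $\rho_\lambda$ in $R_2(\Gamma_k)$ consists of abelian, hence reducible, representations, and $\rho_\lambda$ is not a limit of irreducible ones.

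I expect the main difficulty to be the passage from the infinitesimal vanishing of the off-diagonal cohomology to the genuine local statement that no actual deformation escapes the abelian locus; this is precisely the step that Proposition~\ref{prop:smoothpoint} resolves, by upgrading the dimension count to smoothness and uniqueness of the component. A more elementary route, closer to Klassen's original reasoning, would instead examine the tangent cocycle $u=u_++u_0+u_-$ of a putative family directly: the persistence of an invariant line near each of the two coordinate axes is obstructed exactly by $[u_-]\in H^1(\Gamma_k;\CC_{\lambda^{-2}})$ and $[u_+]\in H^1(\Gamma_k;\CC_{\lambda^2})$, so deforming to an irreducible representation forces both classes to be nonzero, which again can only happen when $\Delta_k(\lambda^2)=0$.
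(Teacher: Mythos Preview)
Your argument is correct, but it follows a genuinely different route from the paper's. The paper proves the statement directly rather than by contrapositive: it uses upper semi-continuity of $\rho\mapsto\dim Z^1(\Gamma_k;\mathfrak{sl}(2)_{\Ad\rho})$ together with the bound $\dim Z^1\geq 4$ at irreducible representations (imported from \cite[Lemma~5.1]{HP15}) to conclude that if $\rho_\lambda$ is a limit of irreducibles then $\dim Z^1(\Gamma_k;\mathfrak{sl}(2)_{\Ad\rho_\lambda})\geq 4$; the same decomposition of $\mathfrak{sl}(2)_{\Ad\rho_\lambda}$ then forces $H^1(\Gamma_k;\CC_{\lambda^{\pm2}})\neq 0$, hence $\Delta_k(\lambda^2)=0$. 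Your approach instead stays entirely at $\rho_\lambda$: assuming $\Delta_k(\lambda^2)\neq 0$ you compute $\dim H^1=1$ and invoke Proposition~\ref{prop:smoothpoint} to pin $\rho_\lambda$ to the $3$-dimensional abelian component. The trade-off is that the paper's argument is conceptually lighter (only semi-continuity, no obstruction theory or Artin's theorem), at the price of importing the lower bound on $\dim Z^1$ at irreducibles; your argument avoids any input about irreducible representations but leans on the heavier machinery packaged in Proposition~\ref{prop:smoothpoint}. The ``elementary route'' you sketch in your final paragraph is in fact closer in spirit to what the paper does.
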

\begin{proof}
In general the function $R_n(\Gamma)\to\ZZ$ given by 
$\rho\mapsto \dim  Z^1(\Gamma,\mathfrak{sl}(n)_{\Ad\rho})$ is upper-semi continuous which means that for every $k\in\ZZ$ the set 
$\{\rho\in R_n(\Gamma) \mid \dim  Z^1(\Gamma,\mathfrak{sl}(n)_{\Ad\rho})\geq k\}$
is closed. Notice that $Z^1(\Gamma,\mathfrak{sl}(n)_{\Ad\rho})$ is the kernel of a linear map which depends algebraically on $\rho$.

Moreover, if  the representation $\rho_\lambda\in R_2(\Gamma_k)$ can be deformed into irreducible representations then 
$\dim Z^1(\Gamma_k,\mathfrak{sl}(2)_{\Ad\rho_\lambda}) \geq 4$ (see \cite[Lemma~5.1]{HP15}). The $\Gamma_k$-module 
$\mathfrak{sl}(2)_{\Ad\rho_\lambda}\cong \mathbb{C}\oplus
\mathbb{C}_{\lambda^2}\oplus \mathbb{C}_{\lambda^{-2}}$ decomposes
into one-dimensional modules (see Example~\ref{ex:diag}).
Now, $H^1(\Gamma_k,\mathbb{C})\cong \mathbb{C}$ and for $\lambda^2\neq 1$ we have
$B^1(\Gamma_k,\mathbb{C}_{\lambda^2})\cong \mathbb{C}$. Hence,
$\dim Z^1(\Gamma_k,\mathfrak{sl}(2)_{\Ad\rho_\lambda}) \geq 4$ implies that
$H^1(\Gamma_k,\mathbb{C}_{\lambda^{\pm2}})\neq0$ or $H^1(\Gamma_k,\mathbb{C}_{\lambda^{-2}})\neq0$.

Finally, $H^1(\Gamma_k,\mathbb{C}_{\lambda^{\pm2}})\neq 0$ and $\lambda^{\pm2}\neq 1$ implies that 
$\Delta_k(\lambda^{\pm2})=0$ (see Example\ref{ex:diag}).
\end{proof}
\begin{remark}
Notice that $\Delta_k(t)\doteq\Delta_k(t^{-1})$ is symmetric and  hence
$H^1(\Gamma_k,\mathbb{C}_{\lambda^{-2}})\neq0$ if and only if
$H^1(\Gamma_k,\mathbb{C}_{\lambda^{2}})\neq0$.
Here $p \doteq q$ means that  $p,q\in\CC[t^{\pm1}]$ are \emph{associated elements}, i.e.\ there exists some unit 
$c\, t^k\in \CC[t^{\pm 1}]$, 
with $ c\in\CC^*$ and $k\in\ZZ$,  such that $p=c\,t^k\,q$.
\end{remark}

In  general, it is still a conjecture that the necessary condition in Theorem~\ref{thm:Klassen} is also sufficient i.e.\ infinitesimal deformation implies deformation. Nevertheless, we have the following result \cite{Heusener-Porti-Suarez2001}:
\begin{theorem}\label{thm:sufficent_n=2}
Let $k\subset S^3$ be a knot and let $\lambda\in\CC^*$.
If $\lambda^2$ is a simple root of $\Delta_k(t)$ then $\rho_\lambda$ is the limit of irreducible representation. 

More precisely, the character $\chi_\lambda$ of $\rho_\lambda$ is contained in exactly two components. One component $Y_2\cong \mathbb{C}$ only contains characters of abelian (diagonal representations), and the second component $X_\lambda$ contains characters of irreducible representations. 
Moreover,
we have $Y_2$ and $X_\lambda$ intersect transversally at $\chi_\rho$, and $\chi_\lambda$ is a smooth point on 
$Y_2$ and  $X_\lambda$.
\end{theorem}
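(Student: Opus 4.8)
The plan is to study the local structure of $R_2(\Gamma_k)$ at the abelian representation $\rho_\lambda$ by deformation theory: since $\rho_\lambda$ is reducible it is \emph{not} infinitesimally regular, so Proposition~\ref{prop:smoothpoint} does not apply and a direct obstruction analysis is needed. First I would record the cohomology. As in Example~\ref{ex:diag} the coefficient module splits by weights, $\sln[2]_{\Ad\rho_\lambda}\cong\CC\oplus\CC_{\lambda^2}\oplus\CC_{\lambda^{-2}}$, corresponding to the basis $h,e,f$ of $\sln[2]$. Because $\Delta_k(1)\neq0$ we have $\lambda^2\neq1$; then $H^1(\Gamma_k;\CC)\cong\CC$, and the simple-root hypothesis together with Example~\ref{ex:diag} gives $\dim H^1(\Gamma_k;\CC_{\lambda^{\pm2}})=1$. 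Hence $\dim H^1(\Gamma_k;\sln[2]_{\Ad\rho_\lambda})=3$, spanned by a weight-$0$ class $z_0$ and weight-$(\pm2)$ classes $z_\pm$; moreover $\dim H^0=1$ (only $h$ is invariant), so $\dim Z^1=5$, while a Poincar\'e--Lefschetz duality and Euler-characteristic count give $H^2(\Gamma_k;\CC)=0$ and $\dim H^2(\Gamma_k;\CC_{\lambda^{\pm2}})=1$.

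Next I would run the obstruction calculus (the classes $\zeta_{k+1}\in H^2(\Gamma_k;\sln[2]_{\Ad\rho_\lambda})$). For $u_1=a\,z_0+b\,z_++c\,z_-$ the primary obstruction is the cup-product bracket $[u_1\cup u_1]$; using $[h,e]=2e$, $[h,f]=-2f$, $[e,f]=h$ and splitting by weight, its weight-$0$ part is proportional to $bc\,[z_+\cup z_-]\in H^2(\Gamma_k;\CC)=0$ and so vanishes identically, while its weight-$(\pm2)$ parts are proportional to $ab\,[z_0\cup z_+]$ and $ac\,[z_0\cup z_-]$. The decisive point is that $[z_0\cup z_\pm]\neq0$, i.e.\ that cup product with $\varphi$ is an isomorphism $H^1(\Gamma_k;\CC_{\lambda^{\pm2}})\xrightarrow{\sim}H^2(\Gamma_k;\CC_{\lambda^{\pm2}})$. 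I would prove this through the Wang sequence of the infinite cyclic cover of $C_k$, identifying $\cup\,\varphi$ with the canonical map $\Ker(t-\lambda^{\pm2})\to\mathrm{coker}(t-\lambda^{\pm2})$ on the Alexander module, which is an isomorphism exactly when $\lambda^{\pm2}$ is a simple root of $\Delta_k$; this is precisely where the hypothesis enters. Granting it, the quadratic cone is $\{ab=ac=0\}=\{a=0\}\cup\{b=c=0\}$, the union of the off-diagonal $2$-plane and the diagonal line.

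The diagonal line $\{b=c=0\}$ integrates at once via the explicit family $\mu\mapsto\rho_\mu$, $\mu$ near $\lambda$, giving the abelian component. For the plane $\{a=0\}$ I would take $u_1=b\,z_++c\,z_-$ with $bc\neq0$ and prove formal integrability to all orders. Every higher obstruction again has vanishing weight-$0$ part ($H^2(\Gamma_k;\CC)=0$), so only its weight-$(\pm2)$ parts in $H^2(\Gamma_k;\CC_{\lambda^{\pm2}})\cong\CC$ must be killed, and I expect this to be the main obstacle: in contrast with Proposition~\ref{prop:smoothpoint}, the restriction $H^2(\Gamma_k;\CC_{\lambda^{\pm2}})\to H^2(\partial C_k;\CC_{\lambda^{\pm2}})=0$ is the zero map, so boundary injectivity is useless and one must instead exploit the nondegeneracy coming from the simple root (the Poincar\'e--Lefschetz pairing on the knot exterior, together with the isomorphism above) to solve for $u_{k+1}$ at each stage. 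Once all obstructions vanish, Artin's theorem \cite{Artin1968} upgrades the formal solution to a convergent path $\rho_t$ with $\rho_0=\rho_\lambda$; since both off-diagonal blocks are nonzero for small $t\neq0$, $\rho_t$ is irreducible, so $\rho_\lambda$ is a limit of irreducible representations.

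Finally I would read off the global statement. The integration above shows that near $\rho_\lambda$ the germ of $R_2(\Gamma_k)$ is exactly the union of the two smooth branches integrating $\CC z_0\oplus B^1$ (dimension $3$) and $\CC z_+\oplus\CC z_-\oplus B^1$ (dimension $4$); they meet in $B^1$, and since $3+4-\dim B^1=5=\dim Z^1$ their tangent spaces span $Z^1$, so $\rho_\lambda$ lies on precisely two components of $R_2(\Gamma_k)$, an abelian one and an irreducible one. To pass to $X_2(\Gamma_k)$ I would use that $\mathrm{Stab}(\rho_\lambda)\cong\CC^*$ acts on the slice $H^1$ with weights $0,+2,-2$, so its invariants are generated by $z_0$ and $z_+z_-$ and the quotient is modelled on $\CC^2$ with coordinates $(z_0,z_+z_-)$; the two branches map onto the two coordinate axes, namely $Y_2\cong\CC$ (abelian characters) and the curve $X_\lambda$ (characters of irreducibles), which therefore meet transversally at $\chi_\lambda$ and are each smooth there. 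The one genuinely hard ingredient is the simultaneous vanishing of all higher obstructions along the irreducible branch; the remainder is cohomological bookkeeping and the slice computation.
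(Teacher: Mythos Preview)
The paper does not actually prove this theorem: it is quoted from \cite{Heusener-Porti-Suarez2001} with no argument given here, so your proposal should be compared with that reference (and with the later refinements in \cite{BenAbdelghani2000,BenAbdelghani2010,HP15}).

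Your outline is sound and your cohomological bookkeeping is correct: the weight decomposition, the dimensions of $H^i$, the identification of the second obstruction with the cup product, and the crucial lemma that $\cup\,\varphi\co H^1(\Gamma_k;\CC_{\lambda^{\pm2}})\to H^2(\Gamma_k;\CC_{\lambda^{\pm2}})$ is an isomorphism precisely when $\lambda^{\pm2}$ is a simple root of $\Delta_k$ are exactly the ingredients used in the literature. Your slice computation at the end is also right. However, your route differs from \cite{Heusener-Porti-Suarez2001} at the key technical step. There one does \emph{not} attempt to kill the infinite tower of obstructions directly at the diagonal point $\rho_\lambda$. Instead one first integrates the single cocycle $z_+$ explicitly to obtain the reducible, non--semisimple (upper--triangular, metabelian) representation $\rho_\lambda^+$; the cocycle is a genuine crossed homomorphism to $\CC_{\lambda^2}$, so $\gamma\mapsto\big(\begin{smallmatrix}1 & z_+(\gamma)\\0&1\end{smallmatrix}\big)\rho_\lambda(\gamma)$ is already an honest representation. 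One then proves that $\rho_\lambda^+$ is a \emph{regular} point of $R_2(\Gamma_k)$ by computing $\dim H^1(\Gamma_k;\sln[2]_{\Ad\rho_\lambda^+})=1$ under the simple--root hypothesis (here the module no longer splits by weights but has a two--step filtration, and the long exact sequence together with the cup--product isomorphism does the work). Smoothness of $\rho_\lambda^+$ on a unique $4$--dimensional component then follows as in Lemma~\ref{lem:smoothness}; since $\rho_\lambda\in\overline{\mathrm O(\rho_\lambda^+)}$, the diagonal representation lies on that same component, and the local picture at $\chi_\lambda$ is read off via the \'etale slice exactly as you describe.

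The practical difference is this: at $\rho_\lambda^+$ one has $H^0=0$ and $\dim H^1=1$, so the obstruction problem is one--dimensional and tractable, whereas at $\rho_\lambda$ you face a genuinely infinite obstruction tower in $H^2(\Gamma_k;\CC_{\lambda^{\pm2}})\cong\CC^2$ with no boundary--injectivity shortcut (as you correctly note). Your proposal to ``exploit the nondegeneracy coming from the simple root \ldots\ to solve for $u_{k+1}$ at each stage'' is not wrong in spirit---this is essentially what the tangent--cone analysis of \cite{BenAbdelghani2010} and the sketch for the general case in \cite{HP15} do---but as written it is not an argument, and in those references the higher--order step is ultimately closed by producing the $4$--dimensional family by other means (change of base point, or an a priori dimension count) rather than by an inductive vanishing. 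If you want to stay at $\rho_\lambda$, you should either (i) exhibit the $4$--dimensional family explicitly (e.g.\ via $\rho_\lambda^+$ and its orbit) and then invoke the equality $\dim_\rho R_2=\dim$(tangent cone) to conclude, or (ii) carry out the full Kuranishi/slice argument as in \cite{BenAbdelghani2010}. Either way, the move to the non--semisimple point is what makes the original proof in \cite{Heusener-Porti-Suarez2001} go through cleanly, and that is the step your sketch is missing.
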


\begin{remark}
Related results, also for other Lie groups are:
Shors \cite{Shors1991}, Frohman--Klassen \cite{FrohmanKlassen1991}, Herald \cite{Herald1997}, Heusener--Kroll \cite{HeusenerKroll1998}, 
Ben Abdelghani \cite{BenAbdelghani2000,BenAbdelghani2010}, Heusener--Porti \cite{Heusener-Porti2005}.
\end{remark}

\subsubsection{The general case.}
Let us go back to the representation
$\rho_\lambda = (\lambda^{b\varphi}\otimes \alpha)\oplus (\lambda^{-a\varphi}\otimes \beta)\in R_n(\Gamma_k)$ given by
Equation~\eqref{eq:n=gen}:
\[ \rho_\lambda(\gamma) = 
\begin{pmatrix}
\lambda^{b\varphi(\gamma)} \alpha(\gamma) & \mathbf{0}\\
\mathbf{0} & \lambda^{-a\varphi(\gamma)} \beta(\gamma) 
\end{pmatrix}\,.
\]

The following generalization of Theorem~\ref{thm:Klassen} was proved in \cite{HP15}:
\begin{theorem}\label{thm:necessary}
Let $\alpha\co\Gamma_k\to\mathrm{SL}_a(\CC)$ and  
$\beta\co\Gamma_k\to\mathrm{SL}_b(\CC)$ be irreducible, $a+b=n$, and assume that $\alpha$ and $\beta$ are infinitesimal regular.
If $\rho_\lambda\in R_n(\Gamma_k)$ is a limit of irreducible representations then 
$\Delta_1^{\alpha\otimes\beta^*}(\lambda^n)= \Delta_1^{\beta\otimes\alpha^*}(\lambda^{-n})=0$.
\end{theorem}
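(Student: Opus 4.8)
The plan is to imitate the proof of Theorem~\ref{thm:Klassen}: decompose the adjoint module of $\rho_\lambda$ into blocks, identify the off-diagonal blocks with twists of $\alpha\otimes\beta^*$ and $\beta\otimes\alpha^*$, and then use upper semicontinuity of $\rho\mapsto\dim Z^1(\Gamma_k;\sln_{\Ad\rho})$ to force the off-diagonal cohomology to be nonzero. Writing $V_1=\lambda^{b\varphi}\otimes\alpha$ and $V_2=\lambda^{-a\varphi}\otimes\beta$, so that $\rho_\lambda=V_1\oplus V_2$ by \eqref{eq:n=gen}, the module $\mathrm{End}(V_1\oplus V_2)$ splits into the four blocks $\mathrm{End}(V_1)$, $\mathrm{End}(V_2)$, $\mathrm{Hom}(V_2,V_1)$, $\mathrm{Hom}(V_1,V_2)$. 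On the diagonal the $\lambda$-twists cancel, giving $\mathrm{End}(V_1)\cong\sln[a]_{\Ad\alpha}\oplus\CC$ and $\mathrm{End}(V_2)\cong\sln[b]_{\Ad\beta}\oplus\CC$; on the off-diagonal they add up to
$$\mathrm{Hom}(V_2,V_1)\cong\lambda^{n\varphi}\otimes(\alpha\otimes\beta^*),\qquad \mathrm{Hom}(V_1,V_2)\cong\lambda^{-n\varphi}\otimes(\beta\otimes\alpha^*),$$
and passing from $\mathfrak{gl}(n)$ to $\sln$ deletes one trivial summand from the diagonal part.

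Using the infinitesimal regularity of $\alpha$ and $\beta$ (Definition~\ref{def:infinites-regular}, with one boundary torus) and $\dim H^1(\Gamma_k;\CC)=1$, the diagonal part of $\sln_{\Ad\rho_\lambda}$ contributes $(a-1)+(b-1)+1=n-1$ to $\dim H^1$. Since $V_1\not\cong V_2$, Schur's lemma gives $\dim H^0(\Gamma_k;\sln_{\Ad\rho_\lambda})=1$, hence $\dim B^1=n^2-2$. Writing $d_+$ and $d_-$ for the dimensions of $H^1$ of the two off-diagonal blocks, I obtain
$$\dim Z^1(\Gamma_k;\sln_{\Ad\rho_\lambda})=(n-1)+(n^2-2)+d_++d_-=n^2+n-3+d_++d_-.$$

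The key inequality comes next. As in Theorem~\ref{thm:Klassen}, $\rho\mapsto\dim Z^1(\Gamma_k;\sln_{\Ad\rho})$ is upper semicontinuous. For any $\rho'\in R_n(\Gamma_k)$ the half-lives-half-dies estimate from the proof of Proposition~\ref{prop:smoothpoint} (inequality \eqref{eq:poincare} with $k=1$) yields $\dim H^1(\Gamma_k;\sln_{\Ad\rho'})\geq n-1$; if $\rho'$ is irreducible then $\dim H^0=0$, so $\dim B^1=n^2-1$ and therefore $\dim Z^1(\rho')\geq(n-1)+(n^2-1)=n^2+n-2$. Hence, if $\rho_\lambda$ is a limit of irreducibles, semicontinuity forces $\dim Z^1(\rho_\lambda)\geq n^2+n-2$, so that $d_++d_-\geq 1$. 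For $n=2$, $a=b=1$ this is precisely the bound $\dim Z^1\geq 4$ used in Theorem~\ref{thm:Klassen}.

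Finally I would translate $d_++d_->0$ into the vanishing of the twisted Alexander polynomials. As $\alpha,\beta$ are irreducible and $V_1\not\cong V_2$, Schur's lemma gives $H^0(\Gamma_k;\mathrm{Hom}(V_2,V_1))=H^0(\Gamma_k;\mathrm{Hom}(V_1,V_2))=0$, so the twisted analogue of the criterion in Example~\ref{ex:diag} (cf. \cite[Lemma~2.3]{BenAH15}) identifies $d_+>0$ with $\Delta_1^{\alpha\otimes\beta^*}(\lambda^n)=0$ and $d_->0$ with $\Delta_1^{\beta\otimes\alpha^*}(\lambda^{-n})=0$. The Poincar\'e--Lefschetz duality of twisted Alexander polynomials, $\Delta_1^{\alpha\otimes\beta^*}(t)\doteq\Delta_1^{\beta\otimes\alpha^*}(t^{-1})$ (valid because $(\alpha\otimes\beta^*)^*\cong\beta\otimes\alpha^*$), makes these two conditions equivalent; together with $d_++d_-\geq1$ they must both hold, which is the assertion. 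I expect the genuine obstacle to be this last step: setting up the exact dictionary between the off-diagonal cohomology and the roots of $\Delta_1$, checking that the exceptional factors (the order of $H_0$ and the value $\lambda^n=1$) do not interfere, and invoking the correct form of twisted duality. By contrast the block decomposition and the semicontinuity bound are routine once the trace-zero bookkeeping is done carefully.
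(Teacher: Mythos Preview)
Your approach is essentially the paper's: the same block decomposition \eqref{eq:sln-dec}, the same semicontinuity bound $\dim Z^1\geq n^2+n-2$ (your derivation via half-lives-half-dies is what underlies the cited \cite[Lemma~5.1]{HP15}), and the same reduction to the off-diagonal cohomology. Two points deserve comment.

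First, the assertion ``since $V_1\not\cong V_2$'' is not justified by the hypotheses: nothing excludes $a=b$ and $\alpha\cong\lambda^{-n\varphi}\otimes\beta$. The paper sidesteps this by never isolating $d_\pm$; instead it keeps the combination $\dim H^1(\Gamma_k;\mathcal{M}^\pm_{\lambda^{\pm n}})-\dim H^0(\Gamma_k;\mathcal{M}^\pm_{\lambda^{\pm n}})$ throughout, and the inequality $n^2+n-2\leq n^2+n-3+\sum_\pm(\dim H^1-\dim H^0)$ forces one of these differences to be positive. This formulation is what matches the Alexander-polynomial criterion in general, so you should adopt it rather than assume $H^0(\mathcal{M}^\pm)=0$.

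Second, for the step you flag as the obstacle, the paper's device is the long exact sequence in homology associated to
\[
0\to\mathcal{M}^+_t\xrightarrow{\,(t-\lambda^n)\cdot\,}\mathcal{M}^+_t\to\mathcal{M}^+_{\lambda^n}\to 0,
\]
which yields $\dim H_1(\Gamma_k;\mathcal{M}^+_{\lambda^n})\geq\dim H_0(\Gamma_k;\mathcal{M}^+_{\lambda^n})$ with equality iff $H_1(\Gamma_k;\mathcal{M}^+_t)$ has no $(t-\lambda^n)$-torsion, i.e.\ iff $\Delta_1^{\alpha\otimes\beta^*}(\lambda^n)\neq0$. Kronecker duality \eqref{eqn:Kronecker} then converts this into the cohomological inequality above, and your observation $\Delta_1^{\alpha\otimes\beta^*}(t)\doteq\Delta_1^{\beta\otimes\alpha^*}(t^{-1})$ finishes the symmetry. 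So the ``exact dictionary'' you anticipate needing is precisely this short exact sequence of coefficient modules; no separate control of $\Delta_0$ or of the value $\lambda^n=1$ is required once you track $H^1-H^0$ rather than $H^1$ alone.
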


Let us recall some facts about the \emph{twisted Alexander polynomial}.
For more details see \cite{Wada94,Kitano96, KirkLivingston99,Wada2010,HP15}.
Let $V$ be a complex vector space, and $\rho\co\Gamma_k\to \mathrm{GL}(V)$ a representation.
We let $C_\infty\to C_k$ denote the infinite cyclic covering of the knot exterior.
The \emph{twisted Alexander module}  is the $\mathbb{C}[\mathbb{Z}]\cong\mathbb{C}[t^{\pm1}]$-module $H_i(C_\infty,V)$.
A generator $\Delta^\rho_i(t)$ of its order ideal is called the  \emph{twisted Alexander polynomial}
$\Delta_i^\rho(t)\in\mathbb{C}[t^\pm1]$.
Notice that $H_i(C_\infty,V)\cong H_i(C_k,V[\mathbb{Z}])\cong H_i(\Gamma_k,V[\mathbb{Z}])$ where
$V[\mathbb{Z}] = V\otimes_{\mathbb{C}[\Gamma]}\mathbb{C}[\mathbb{Z}]$
is a $\Gamma_k$ module via $\rho\otimes t^\varphi$.

The dual representation
$\rho^*\co\Gamma\to \mathrm{GL}(V^*)$ is given by $\rho^*(\gamma)(f) = f\circ \rho(\gamma)^{-1}$ for $f\in V^* = \mathrm{Hom}(V,\mathbb{C})$ and $\gamma\in\Gamma$. 
In particular, if $\rho\co\Gamma\to \mathrm{GL}(n)$ then $\rho^*(\gamma)=\mbox{}^t\rho(\gamma)^{-1}$ for all $\gamma\in\Gamma_k$.
\begin{lemma}\label{lem:dual}
The representations $\rho$ and $\rho^*$ are equivalent if and only if there exists a $\Gamma$-invariant, non-degenerated bilinear form $V\otimes V\to \mathbb{C}$.
\end{lemma}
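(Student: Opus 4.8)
The statement is a standard "self-duality $\Leftrightarrow$ invariant bilinear form" equivalence, and I would prove both directions by unwinding the definitions. A $\Gamma$-invariant bilinear form $B\co V\otimes V\to\CC$ is the same datum as a linear map $\Phi\co V\to V^*$ via $\Phi(v)(w)=B(v,w)$, and non-degeneracy of $B$ is exactly the statement that $\Phi$ is an isomorphism. The whole proof is the observation that $\Gamma$-invariance of $B$ translates precisely into the intertwining condition $\Phi\circ\rho(\gamma)=\rho^*(\gamma)\circ\Phi$, i.e.\ $\Phi$ is an equivalence of representations $\rho\xrightarrow{\sim}\rho^*$.

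\smallskip

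For the forward direction, suppose $\rho\sim\rho^*$, so there is $\Phi\in\GLn[V]$ with $\rho^*(\gamma)\Phi=\Phi\rho(\gamma)$ for all $\gamma$. Define $B(v,w):=\Phi(v)(w)$. Then I would compute, for $\gamma\in\Gamma$ and $v,w\in V$,
\[
B(\gamma\cdot v,\gamma\cdot w)=\Phi(\rho(\gamma)v)\big(\rho(\gamma)w\big)
=\big(\rho^*(\gamma)\Phi(v)\big)\big(\rho(\gamma)w\big)
=\Phi(v)\big(\rho(\gamma)^{-1}\rho(\gamma)w\big)=B(v,w),
\]
using the definition $\rho^*(\gamma)(f)=f\circ\rho(\gamma)^{-1}$ in the middle step. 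Thus $B$ is $\Gamma$-invariant, and it is non-degenerate because $\Phi$ is an isomorphism.

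\smallskip

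For the converse, given a $\Gamma$-invariant non-degenerate $B$, define $\Phi\co V\to V^*$ by $\Phi(v)=B(v,-)$; non-degeneracy forces $\Phi$ to be injective, hence bijective since $\dim V=\dim V^*<\infty$. Reversing the computation above shows $\Phi$ intertwines $\rho$ and $\rho^*$, so $\rho\sim\rho^*$. The only genuine subtlety is that "equivalent" in the paper means conjugate by an element of $\SL_n(\CC)$ rather than by an arbitrary invertible matrix: if $\Phi$ lands in $\GLn$ but not $\SLn$, I would rescale by a suitable scalar $c$ (replacing $B$ by $cB$, equivalently $\Phi$ by $c\Phi$) to arrange $\det\Phi=1$, which is possible over $\CC$ since every nonzero scalar has an $n$-th root. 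This normalization step is the one place requiring a small argument rather than pure unwinding, and it is where I expect the only real care to be needed; everything else is a direct translation between the bilinear form and the intertwiner.
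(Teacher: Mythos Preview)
The paper states this lemma without proof (it is a standard fact from representation theory), so there is nothing to compare against; your argument is correct and is exactly the standard one.

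One small remark: your final paragraph about normalizing $\det\Phi$ to lie in $\SL_n$ is unnecessary here. In the context immediately preceding the lemma, $\rho\co\Gamma\to\mathrm{GL}(V)$ and $\rho^*\co\Gamma\to\mathrm{GL}(V^*)$ are representations on \emph{different} vector spaces, so ``equivalent'' can only mean that there exists a $\Gamma$-equivariant linear isomorphism $V\to V^*$ --- there is no ambient $\SL_n$ in which both live, and the earlier convention $\rho\sim\rho'$ via $A\in\SL_n(\CC)$ was specific to representations into $\SL_n(\CC)$. So the rescaling step, while harmless, is not needed.
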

\begin{example}
If $\rho\co\Gamma\to\mathrm{O}(n)$ or $\rho\co\Gamma\to\mathrm{SL}_2(\CC)$ 
then $\rho$ and $\rho^*$ are equivalent.
\end{example}

The following theorem is proved in \cite{HP15}:
\begin{theorem}
If $\rho\co\Gamma_k\to \mathrm{GL}(V)$ is a semisimple representation then
$\Delta_i^{\rho^*}(t)\doteq\Delta_i^\rho(t^{-1})$.
\end{theorem}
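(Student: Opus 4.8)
The plan is to derive the symmetry from Poincaré–Lefschetz duality on the knot exterior together with the universal coefficient theorem over the principal ideal domain $\Lambda=\CC[t^{\pm1}]$. Write $M_\rho$ for the $\Gamma_k$–module $V[\ZZ]=V\otimes_\CC\Lambda$ with action $\gamma\cdot(v\otimes p)=\rho(\gamma)v\otimes t^{\varphi(\gamma)}p$, so that, under the identification recalled above, $\Delta_i^\rho(t)$ generates the order ideal of the $\Lambda$–module $H_i(C_k;M_\rho)\cong H_i(\Gamma_k;V[\ZZ])$. Let $\overline{\phantom{x}}\co\Lambda\to\Lambda$ be the involution $t\mapsto t^{-1}$, and for a $\Lambda$–module $N$ let $\overline N$ be $N$ with scalars twisted by this involution; for finitely generated torsion $N$ one has $\operatorname{ord}(\overline N)\doteq\operatorname{ord}(N)(t^{-1})$. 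Thus it suffices to produce, up to operations that preserve order ideals, an identification relating $H_\ast(C_k;M_{\rho^*})$ with the conjugates of the modules $H_\ast(C_k;M_\rho)$.

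First I would record the chain–level dualization. Since $C_k$ is a finite complex and $M_\rho$ is free of finite rank over $\Lambda$, the twisted chain complex $C_\ast(C_k;M_\rho)$ is a complex of finitely generated free $\Lambda$–modules. Applying $\operatorname{Hom}_\Lambda(-,\Lambda)$ and using $\operatorname{Hom}_\Lambda(V\otimes_\CC\Lambda,\Lambda)\cong V^\ast\otimes_\CC\Lambda$, one gets a natural isomorphism $\operatorname{Hom}_\Lambda\big(C_\ast(C_k;M_\rho),\Lambda\big)\cong C^\ast(C_k;\overline{M_{\rho^*}})$. This single step produces both features at once: the dual representation $\rho^*$ arises because $\operatorname{Hom}_\CC(V,-)$ is computed by $V^\ast$, and the conjugation $t\mapsto t^{-1}$ arises because the contragredient of the $t^\varphi$–twist is the $t^{-\varphi}$–twist. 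The universal coefficient theorem over the PID $\Lambda$ then gives, on torsion parts, $\operatorname{ord}\big(H^i(C_k;\overline{M_{\rho^*}})\big)\doteq\operatorname{ord}\big(\operatorname{Ext}^1_\Lambda(H_{i-1}(C_k;M_\rho),\Lambda)\big)\doteq\Delta_{i-1}^\rho(t)$, since $\operatorname{Ext}^1_\Lambda(-,\Lambda)$ preserves order ideals up to associates.

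Next I would feed in the two geometric inputs. Poincaré–Lefschetz duality for the compact oriented pair $(C_k,\partial C_k)$, whose boundary is the torus $T^2$, gives cap–product isomorphisms $H^i(C_k;\overline{M_{\rho^*}})\cong H_{3-i}(C_k,\partial C_k;\overline{M_{\rho^*}})$, and the long exact sequence of the pair, combined with the regularity of the restriction $\iota_j^*\rho$ to $\partial C_k$ (which controls the toral terms exactly as in Proposition~\ref{prop:smoothpoint}), lets me pass to the absolute homology $H_{3-i}(C_k;\overline{M_{\rho^*}})$ at the cost only of units. Using $\operatorname{ord}\big(H_{3-i}(C_k;\overline{M_{\rho^*}})\big)\doteq\Delta_{3-i}^{\rho^*}(t^{-1})$ and assembling with the previous paragraph yields a duality $\Delta_{i-1}^\rho(t)\doteq\Delta_{3-i}^{\rho^*}(t^{-1})$. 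The semisimplicity hypothesis enters to guarantee that $\rho$ is recovered as $\rho^{**}$ compatibly with the evaluation pairing $V\otimes V^\ast\to\CC$, and, together with the computation of $H_0$ and the top homology, that the Alexander modules in play are $\Lambda$–torsion so that all order ideals are genuine.

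The main obstacle will be the degree bookkeeping. Tracking the Poincaré shift $i\mapsto 3-i$ against the universal–coefficient shift $i\mapsto i-1$ produces a pairing of the form $\Delta_j^\rho(t)\doteq\Delta_{2-j}^{\rho^*}(t^{-1})$, which returns the \emph{same} index only in the middle dimension $j=1$; to obtain the stated symmetry $\Delta_i^{\rho^*}(t)\doteq\Delta_i^\rho(t^{-1})$ for every $i$ I would need the additional identifications between $\Delta_0$ and $\Delta_2$ forced by the $3$–manifold structure of $C_k$ and the asphericity of the knot exterior (so that only degrees $0,1,2$ occur). Verifying that the boundary–torus contributions genuinely cancel, rather than surviving as non-unit factors, and checking the whole index/sign ledger against the classical case $\dim V=1$ where the statement reduces to the symmetry $\Delta_k(t)\doteq\Delta_k(t^{-1})$ of the ordinary Alexander polynomial, is the step most likely to hide an off–by–one subtlety and should be carried out with care.
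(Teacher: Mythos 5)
The paper does not actually prove this theorem --- it is quoted from \cite{HP15} --- so your argument has to stand entirely on its own. Your first two paragraphs are sound and reproduce exactly the mechanism the paper records in Equation~\eqref{eqn:Kronecker}: the chain-level identification $\operatorname{Hom}_\Lambda\big(C_*(C_k;M_\rho),\Lambda\big)\cong C^*(C_k;\overline{M_{\rho^*}})$, the universal coefficient theorem over the PID $\Lambda$, and Poincar\'e--Lefschetz duality. The difficulty is that this machinery inevitably outputs a \emph{degree-reversing, absolute-to-relative} identity, $\Delta_{j}^{\rho}(t)\doteq \operatorname{ord}H_{2-j}(C_k,\partial C_k;M_{\rho^*})(t^{-1})$, and the two repairs you propose in your last paragraph both fail. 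First, the passage from $H_*(C_k,\partial C_k;-)$ to $H_*(C_k;-)$ is not ``at the cost only of units'': already for $V=\CC$ and $\rho$ trivial one has $H_2(C_\infty;\CC)=0$ but $H_2(C_\infty,\partial C_\infty;\CC)\cong\Lambda/(t-1)$, because $H_1(\partial C_\infty)\cong\Lambda/(t-1)$ maps to zero in $H_1(C_\infty)$ (the longitude bounds a lifted Seifert surface); the boundary torus contributes genuine non-unit torsion, supported at $t=1$ here and at the peripheral eigenvalue ratios in general. Invoking regularity of $\rho$ restricted to the boundary does not help: the theorem carries no such hypothesis, and even when regularity holds it forces $H^0(T_j;\cdot)$ to have dimension $n-1>0$, i.e.\ it makes the toral terms nonzero rather than killing them. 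Second, there is no identification of $\Delta_0$ with $\Delta_2$ to be had: in the same example $\Delta_0^\rho(t)=t-1$ while $\Delta_2^\rho(t)=1$, so the cross-degree relation your ledger produces at $j=0$ would assert $t-1\doteq 1$. The cases $i=0,2$ of the theorem are true but unreachable by this route, and even the case $i=1$ still requires showing that the kernel of $H_0(\partial C_k;M_{\rho^*})\to H_0(C_k;M_{\rho^*})$ and the image of $H_1(\partial C_k;M_{\rho^*})$ contribute matching factors to the pair sequence, which you have not supplied.

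Relatedly, semisimplicity never does any work in your argument: that $\rho^{**}\cong\rho$ compatibly with evaluation is automatic for any finite-dimensional representation, and torsion-freeness issues are neither implied by nor equivalent to semisimplicity. A hypothesis that is idle in a proof is a warning sign; here it is precisely in comparing invariants with coinvariants (and, dually, $H^2$ with $H_2$) of the restricted modules that semisimplicity must enter, since for a semisimple $G$-module $W$ one has $W^G\cong W_G$ while for a non-semisimple one these differ. I would suggest either consulting the argument of \cite{HP15} directly, or restructuring so that the reduction to irreducible summands (order ideals are multiplicative over direct sums, which is where semisimplicity enters cleanly) is performed first and the boundary and degree bookkeeping is then carried out separately for the trivial and nontrivial summands. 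As written, the proposal establishes a different identity, and for $i\neq 1$ a false one.
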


Now, the proof of Theorem~\ref{thm:necessary} follows the proof of Theorem~\ref{thm:Klassen}.
First, we have to understand the $\Gamma_k$-module $\mathfrak{sl}(n)_{\Ad\rho_\lambda}$.
Let $M_{a, b}(\mathbb C)$ the vector space of $a\times b$ matrices over the complex numbers.
The group $\Gamma_k$ acts on $M_{a, b}(\mathbb C)$ via $\alpha\otimes\beta^*$ i.e.\
for all $\gamma\in\Gamma_k$ and $X\in M_{a, b}(\mathbb C)$ we have
\[
\alpha\otimes\beta^*(\gamma) (X)=
\alpha(\gamma) X \beta(\gamma^{-1})\,.
\]
Similarly, we obtain a representation 
$\beta\otimes\alpha^*\co\Gamma_k\to M_{b,a}(\CC)$.
The proof of the following lemma is given in \cite{HP15}:
\begin{lemma}
If $\alpha\co\Gamma_k\to\mathrm{SL}_a(\CC)$ and  
$\beta\co\Gamma_k\to\mathrm{SL}_b(\CC)$ are irreducible then
the representation $\alpha^*\co\Gamma_k\to\mathrm{SL}_a(\CC)$ is also irreducible.
Moreover, $\alpha\otimes\beta$ and $\beta\otimes\alpha^*$ are semisimple.
\end{lemma}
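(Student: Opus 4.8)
The statement splits into two essentially independent claims, and I would treat them in turn.

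For the irreducibility of $\alpha^*$, I would argue by contraposition using annihilators. Suppose $W\subset(\CC^a)^*$ is a proper nonzero $\alpha^*(\Gamma_k)$-invariant subspace, and set $W^\perp=\{v\in\CC^a\mid f(v)=0\text{ for all }f\in W\}$. Using $\alpha^*(\gamma)(f)=f\circ\alpha(\gamma)^{-1}$ one computes, for $v\in W^\perp$, $f\in W$, and $\gamma\in\Gamma_k$, that $f(\alpha(\gamma)v)=(\alpha^*(\gamma^{-1})f)(v)=0$, since $\alpha^*(\gamma^{-1})f\in W$ and $v\in W^\perp$. Hence $W^\perp$ is an $\alpha(\Gamma_k)$-invariant subspace with $0\neq W^\perp\neq\CC^a$, contradicting the irreducibility of $\alpha$. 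Thus $\alpha^*$ is irreducible, and in particular semisimple.

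For the semisimplicity of the tensor products I would pass to Zariski closures and exploit that the ground field is $\CC$. Let $G_\alpha=\overline{\alpha(\Gamma_k)}$ and $G_\beta=\overline{\beta(\Gamma_k)}$ be the Zariski closures in $\mathrm{SL}_a(\CC)$ and $\mathrm{SL}_b(\CC)$. Since the stabiliser of a subspace is Zariski closed, a subspace is $\alpha(\Gamma_k)$-invariant if and only if it is $G_\alpha$-invariant; hence $\CC^a$ is an irreducible $G_\alpha$-module and $\CC^b$ an irreducible $G_\beta$-module. The standard argument that the unipotent radical of an algebraic group has a nonzero fixed subspace, which is invariant by normality and hence all of the module by irreducibility, shows that an algebraic subgroup of $\mathrm{GL}(V)$ acting irreducibly and faithfully on $V$ is reductive; therefore $G_\alpha$ and $G_\beta$ are reductive.

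The key step is then to realise each tensor product as a rational representation of a \emph{single} reductive group. Consider $\rho=(\alpha,\beta)\co\Gamma_k\to G_\alpha\times G_\beta$ and put $H=\overline{\rho(\Gamma_k)}\subseteq G_\alpha\times G_\beta$. Each projection $H\to G_\alpha$, $H\to G_\beta$ is surjective, since its image is a closed subgroup containing the dense set $\alpha(\Gamma_k)$ (resp.\ $\beta(\Gamma_k)$). Consequently the unipotent radical $R_u(H)$ maps into $R_u(G_\alpha)=1$ and $R_u(G_\beta)=1$, so $R_u(H)\subseteq(\{1\}\times G_\beta)\cap(G_\alpha\times\{1\})=\{(1,1)\}$ and $H$ is reductive. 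Now $\alpha\otimes\beta$ is the composite of $\Gamma_k\to H\hookrightarrow G_\alpha\times G_\beta$ with the rational representation $(g,h)\mapsto g\otimes h$, so $\CC^a\otimes\CC^b$ is a rational $H$-module. Because $\CC$ has characteristic zero, the reductive group $H$ is linearly reductive and all its rational representations are semisimple; and since $\rho(\Gamma_k)$ is Zariski dense in $H$, a subspace is $\Gamma_k$-invariant exactly when it is $H$-invariant, so $\CC^a\otimes\CC^b$ is semisimple as a $\Gamma_k$-module. The identical argument applied to $\beta$ and $\alpha^*$, whose Zariski closure is the image of $G_\alpha$ under $g\mapsto\mbox{}^{t}g^{-1}$ (an isomorphic, hence reductive, group), shows that $\beta\otimes\alpha^*$ is semisimple. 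I expect the main obstacle to lie precisely in the two passages between $\Gamma_k$-invariance and invariance under the Zariski closure, together with the verification that the fibre-product group $H$ has trivial unipotent radical; the annihilator computation and the linear reductivity of reductive groups in characteristic zero are routine.
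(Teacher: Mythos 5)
Your proof is correct; note that the paper itself gives no argument for this lemma but simply defers to the reference \cite{HP15}. Your route --- the annihilator computation for the irreducibility of $\alpha^*$, and semisimplicity of the tensor products via the reductivity (trivial unipotent radical, hence linear reductivity in characteristic zero) of the Zariski closure of $(\alpha,\beta)(\Gamma_k)$ --- is exactly the standard argument and is essentially the one carried out in that reference, so there is nothing to add.
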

In what follows we let $\mathcal{M}^+_{t}$ and $\mathcal{M}^-_{t}$ denote the $\Gamma_k$-modules
\[
\mathcal{M}^+_{t} = M_{a, b}(\mathbb C)\otimes\CC[t,t^{-1}] 
\quad\text{ and }\quad
\mathcal{M}^-_{t} = M_{b, a}(\mathbb C)\otimes\CC[t,t^{-1}] 
\]
where $\Gamma_k$ acts via $\alpha\otimes\beta^*\oplus t^\varphi$ and
$\beta\otimes\alpha^*\otimes t^{\varphi}$ repectively.

\begin{corollary}
If $\alpha\co\Gamma_k\to\mathrm{SL}_a(\CC)$ and  
$\beta\co\Gamma_k\to\mathrm{SL}_b(\CC)$ are irreducible then
\[  \Delta_i^{\alpha\otimes\beta^*}(t) \doteq \Delta_i^{\beta\otimes\alpha^*}(t^{-1})\,.\]
\end{corollary}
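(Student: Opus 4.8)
The plan is to read off the Corollary from the preceding theorem on duals of semisimple representations, applied to $\rho=\alpha\otimes\beta^*$. First I would note that $\alpha\otimes\beta^*$ is semisimple; this is precisely the content of the preceding lemma. Hence the theorem applies and yields
\[
\Delta_i^{(\alpha\otimes\beta^*)^*}(t)\doteq\Delta_i^{\alpha\otimes\beta^*}(t^{-1}).
\]
Everything then reduces to identifying the dual representation $(\alpha\otimes\beta^*)^*$ with $\beta\otimes\alpha^*$.

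For that identification I would use the standard compatibilities $(\rho_1\otimes\rho_2)^*\cong\rho_1^*\otimes\rho_2^*$ and $(\rho^*)^*\cong\rho$, giving $(\alpha\otimes\beta^*)^*\cong\alpha^*\otimes\beta$, together with the $\Gamma_k$-equivariant flip $V\otimes W\cong W\otimes V$, giving $\alpha^*\otimes\beta\cong\beta\otimes\alpha^*$. Concretely, this can be checked by hand through the trace pairing $M_{a,b}(\CC)\times M_{b,a}(\CC)\to\CC$, $(X,Y)\mapsto\tr(XY)$, which identifies $M_{a,b}(\CC)^*$ with $M_{b,a}(\CC)$; a one-line computation then shows that the dual of the action $X\mapsto\alpha(\gamma)X\beta(\gamma)^{-1}$ becomes $Y\mapsto\beta(\gamma)Y\alpha(\gamma)^{-1}$, which is exactly the action defining $\beta\otimes\alpha^*$. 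Since the twisted Alexander polynomial depends only on the equivalence class of the representation (equivalent representations have isomorphic twisted Alexander modules, hence associated generators of the order ideal), I may substitute this identification into the displayed relation to obtain
\[
\Delta_i^{\beta\otimes\alpha^*}(t)\doteq\Delta_i^{\alpha\otimes\beta^*}(t^{-1}).
\]

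To finish, I would apply the substitution $t\mapsto t^{-1}$, an automorphism of $\CC[t^{\pm1}]$ that preserves the relation $\doteq$; this rewrites the last display as $\Delta_i^{\beta\otimes\alpha^*}(t^{-1})\doteq\Delta_i^{\alpha\otimes\beta^*}(t)$, which is the asserted identity. I do not anticipate any genuine difficulty: the argument is essentially formal once the preceding theorem and lemma are granted, and the only steps needing care are the bookkeeping in the dual/flip identification and the (immediate) invariance of $\Delta_i$ under equivalence of representations.
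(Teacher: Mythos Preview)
Your proposal is correct and follows exactly the route the paper intends: the Corollary is stated without proof precisely because it is the immediate combination of the preceding theorem (applied to the semisimple representation furnished by the lemma) with the identification $(\alpha\otimes\beta^*)^*\cong\beta\otimes\alpha^*$ via the trace pairing. A cosmetic shortcut: if you apply the theorem to $\rho=\beta\otimes\alpha^*$ (which is literally what the lemma declares semisimple) rather than to $\alpha\otimes\beta^*$, you land directly on $\Delta_i^{\alpha\otimes\beta^*}(t)\doteq\Delta_i^{\beta\otimes\alpha^*}(t^{-1})$ and can skip the final $t\mapsto t^{-1}$ substitution.
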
 

Now the $\Gamma_k$-module
$\mathfrak{sl}(n)_{\Ad\rho_\lambda}$ decomposes into a direct sum:
\begin{equation}\label{eq:sln-dec}
\mathfrak{sl}(n)_{\Ad\rho_\lambda}
= \mathfrak{sl}_a(\mathbb
C)_{\Ad\alpha}\oplus \mathfrak{sl}_b(\mathbb C)_{\Ad\beta}\oplus \mathbb C 
\oplus  \mathcal{M}^+_{\lambda^{n}}\oplus  
\mathcal{M}^-_{\lambda^{-n}}\,.
\end{equation}
This can be visualized as
\[
\mathfrak{sl}(n)_{\Ad\rho_\lambda}= 
\begin{pmatrix}
  \mathfrak{sl}(a)_{\Ad\alpha} & \mathcal{M}^+_{\lambda^{n}} \\
   \mathcal{M}^-_{\lambda^{-n}} & \mathfrak{sl}(b)_{\Ad\beta}
\end{pmatrix} \oplus 
\mathbb C 
\begin{pmatrix}
    b \operatorname{Id}_a & 0 \\
    0 & -a \operatorname{Id}_b
\end{pmatrix}.
\]

 For every $\lambda\in\mathbb C^*$ we have a non-degenerate $\Gamma_k$-invariant bilinear form:
$  \Psi\co \mathcal{M}^-_{\lambda^{-n}}\times \mathcal{M}^+_{\lambda^{n}}  \to \mathbb C$
given by $\Psi(Y,X)  \mapsto \mathrm{tr} (YX)$.
As an immediate consequence, we have Poincar\' e and Kronecker dualities:
\begin{eqnarray}
H_i(C;\mathcal{M}^{\pm}_{\lambda^{\pm n}}) & \cong  & 
H_{3-i}(C,\partial C;\mathcal{M}^{\mp}_{\lambda^{\mp n}})^*; \notag\\
H^i(C;\mathcal{M}^{\pm}_{\lambda^{\pm n}}) & \cong  & H^{3-i}(C,\partial C;\mathcal{M}^{\mp}_{\lambda^{\mp n}})^*; \label{eqn:Kronecker}\\
H_i(C;\mathcal{M}^{\pm}_{\lambda^{\pm n}}) & \cong  & H^{i}(C;\mathcal{M}^{\mp}_{\lambda^{\mp n}})^*. \notag
 \end{eqnarray}

\begin{lemma}
If $\alpha\co\Gamma_k\to\mathrm{SL}_a(\CC)$ and  
$\beta\co\Gamma_k\to\mathrm{SL}_b(\CC)$ are irreducible then
$\alpha^*\co\Gamma_k\to\mathrm{SL}_a(\CC)$ is irreducible, and
$\alpha\otimes\beta$ is semisimple.
\end{lemma}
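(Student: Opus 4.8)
The plan is to establish the two assertions separately, using linear duality for the irreducibility of $\alpha^*$ and the structure theory of linear algebraic groups for the semisimplicity of $\alpha\otimes\beta$.

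For the first, I would argue by annihilators. Suppose $W\subseteq(\CC^a)^*$ is a $\Gamma_k$-invariant subspace with $0\neq W\neq(\CC^a)^*$, and set $W^\circ=\{v\in\CC^a\mid f(v)=0 \text{ for all } f\in W\}$, so that $0<\dim W^\circ=a-\dim W<a$. A one-line computation shows $W^\circ$ is $\alpha$-invariant: for $v\in W^\circ$, $\gamma\in\Gamma_k$ and $f\in W$ one has $f(\alpha(\gamma)v)=\big(\alpha^*(\gamma^{-1})f\big)(v)=0$ because $\alpha^*(\gamma^{-1})f\in W$. This proper nonzero $\alpha$-invariant subspace contradicts the irreducibility of $\alpha$, so $\alpha^*$ is irreducible. (This is just the general fact that the dual of an irreducible representation is irreducible.)

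The semisimplicity of $\alpha\otimes\beta$ is the substantive point, since $\Gamma_k$ is infinite and Maschke's theorem does not apply. The idea is to pass to the Zariski closure. Let $G\subseteq\SL_a(\CC)\times\SL_b(\CC)$ be the Zariski closure of the image of $(\alpha,\beta)\co\Gamma_k\to\SL_a(\CC)\times\SL_b(\CC)$. Since the condition of preserving a fixed subspace is Zariski closed, a subspace of $\CC^a\otimes\CC^b$ is $\Gamma_k$-invariant if and only if it is invariant under the composite $G\hookrightarrow\SL_a(\CC)\times\SL_b(\CC)\to\mathrm{GL}(\CC^a\otimes\CC^b)$, $(A,B)\mapsto A\otimes B$. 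Hence $\alpha\otimes\beta$ is a semisimple $\Gamma_k$-module exactly when $\CC^a\otimes\CC^b$ is a semisimple $G$-module, and the latter follows from complete reducibility of finite-dimensional representations of reductive groups in characteristic zero, once we know $G$ is reductive.

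The remaining, and main, step is therefore to prove that $G$ is reductive. I would first record that the Zariski closures $G_\alpha$ and $G_\beta$ of $\alpha(\Gamma_k)$ and $\beta(\Gamma_k)$ act irreducibly, irreducibility passing to the closure, and that any linear algebraic group acting irreducibly on a vector space is reductive: its unipotent radical is normal and, by the Lie--Kolchin theorem, has a nonzero fixed subspace, which must then be the whole space by irreducibility, forcing the unipotent radical to act trivially and hence to be trivial. Since $G$ surjects onto each of the reductive groups $G_\alpha$ and $G_\beta$ (images of homomorphisms of algebraic groups are closed) while $G\hookrightarrow G_\alpha\times G_\beta$, its unipotent radical maps trivially to both factors and is therefore trivial; so $G$ is reductive. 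I expect the reductivity bookkeeping to be the main obstacle, since one must handle the possible disconnectedness of $G$ and keep careful track of the translation between $\Gamma_k$-invariance and $G$-invariance. The underlying principle is Chevalley's theorem that in characteristic zero the tensor product of semisimple representations is again semisimple.
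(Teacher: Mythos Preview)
Your argument is correct. The annihilator trick for the irreducibility of $\alpha^*$ is standard and complete as written. For the semisimplicity of $\alpha\otimes\beta$, the passage to the Zariski closure $G\subset\SL_a(\CC)\times\SL_b(\CC)$ and the reduction to showing $G$ reductive is the right strategy; your bookkeeping is sound, including the point that the projection $G\to G_\alpha$ is surjective (the image is closed, contains $\alpha(\Gamma_k)$, hence contains its closure), so that $R_u(G)$ has trivial image in each reductive factor and therefore vanishes. One small clarification you might add: when you argue that a group acting irreducibly is reductive, you are implicitly using that $G_\alpha\subset\SL_a(\CC)$ acts \emph{faithfully} on $\CC^a$, so that a unipotent radical acting trivially is genuinely trivial. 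The disconnectedness issue you flag is harmless, since in characteristic zero complete reducibility for $G^0$ plus averaging over the finite group $G/G^0$ gives complete reducibility for $G$.

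As for comparison with the paper: the paper does not actually prove this lemma but refers to \cite{HP15}. The argument there is essentially the one you give---pass to the Zariski closure, observe it is reductive because its projections act irreducibly, and invoke complete reducibility in characteristic zero (this is Chevalley's theorem, as you note). So your approach matches the intended one.
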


\begin{proof}[Proof of Theorem~\ref{thm:necessary}]
As in the proof of Theorem~\ref{thm:Klassen} it follows from Lemma~5.1 in \cite{HP15} that
if $\rho_\lambda$ is limit of irreducible representations then
\[
\dim Z^1(\Gamma_k,\mathfrak{sl}(2)_{\Ad\rho_\lambda}) \geq n^2+n-2\,.
\]
Now, consider the decomposition \eqref{eq:sln-dec} of $\mathfrak{sl}(n)_{\Ad\rho_\lambda}$.

\smallskip

\paragraph{Claim:} If  $\alpha$ and $\beta$ are infinitesimal regular and irreducible then
\[
\dim H^1(\Gamma_k,\mathcal{M}^+_{\lambda^{n}}) > 
\dim H^0(\Gamma_k,\mathcal{M}^+_{\lambda^{n}})
\quad\text{  or  }\quad
\dim H^1(\Gamma_k,\mathcal{M}^-_{\lambda^{-n}}) > 
\dim H^0(\Gamma_k,\mathcal{M}^-_{\lambda^{-n}})\,.
\]
\begin{proof}[Proof of the Claim.]
For each $\Gamma$-module $\mathfrak m$, we use the formula
\begin{align}
\label{eqn:dimZ1}
\dim Z^1(\Gamma ;  \mathfrak{m}) &=\dim H^1(\Gamma ;  \mathfrak{m})+\dim B^1(\Gamma ;  \mathfrak{m})\notag\\ 
&= \dim H^1(\Gamma ;  \mathfrak{m})+ \dim \mathfrak{m}- \dim H^0(\Gamma ;  \mathfrak{m})\,.
\end{align}
  Ordering the terms as they appear in \eqref{eqn:dimZ1}: 
\begin{eqnarray*}
\dim Z^1(\Gamma ;  \mathfrak{sl}_a(\mathbb C)_{\Ad\,\alpha}) & = & (a-1)+ (a^2-1) - 0, \\
\dim Z^1(\Gamma ;  \mathfrak{sl}_b(\mathbb C)_{\Ad\,\alpha}) & = & (b-1)+ (b^2-1) - 0, \\
\dim Z^1(\Gamma ;  \mathbb C ) & = & 1+ 1 - 1, \\
\dim Z^1(\Gamma ;  \MM^\pm_{\lambda^{\pm n}}) & = &    \dim H^1(\Gamma ;  
\MM^\pm_{\lambda^{\pm n}}) + a\, b  -
\dim H^0(\Gamma ;  \MM^\pm_{\lambda^{\pm n}}).
\end{eqnarray*}
Hence the decomposition \eqref{eq:sln-dec} together with $n^2+n-2\leq \dim Z^1(\Gamma_k,\mathfrak{sl}(2)_{\Ad\rho_\lambda})$
gives:
\begin{multline*}
n^2+n-2 \leq n^2+n-3 +
\big(\dim H^1(\Gamma_k,\mathcal{M}^-_{\lambda^{-n}}) - 
\dim H^0(\Gamma_k,\mathcal{M}^-_{\lambda^{-n}})\big) \\ 
+ \big( \dim H^1(\Gamma_k,\mathcal{M}^+_{\lambda^{n}}) - 
\dim H^0(\Gamma_k,\mathcal{M}^+_{\lambda^{n}})\big)\,.\qedhere
\end{multline*}
\end{proof}
Now, it follows from Kronecker duality~\eqref{eqn:Kronecker} that
\[
\dim H^1(\Gamma_k,\mathcal{M}^-_{\lambda^{- n}}) > 
\dim H^0(\Gamma_k,\mathcal{M}^-_{\lambda^{- n}})
\quad\Leftrightarrow\quad
\dim H_1(\Gamma_k,\mathcal{M}^+_{\lambda^{ n}}) > 
\dim H_0(\Gamma_k,\mathcal{M}^+_{\lambda^{ n}})\,.
\]
The short exact sequence  of $\Gamma_k$-modules 
$0\to\mathcal{M}^+_{t}\xrightarrow{(t-\lambda^n)\cdot}
\mathcal{M}^+_{t}\to \mathcal{M}^+_{\lambda^{ n}}\to 0$ gives a long exact homology sequence \cite[III.\S6]{Bro82}:
   \begin{multline*}
    \ldots\to H_1(\Gamma;\MM_t^{+}) \xrightarrow{(t-\lambda^{-n})\cdot}
    H_1(\Gamma;\MM_t^{+}) \rightarrow \\
    H_1(  \Gamma;\MM^{+}_{\lambda^{n}})  \xrightarrow{\partial\; } 
    H_0(\Gamma;\MM_t^{+}) \xrightarrow{(t-\lambda^{-n})\cdot }
    H_0(\Gamma;\MM_t^{+}) \rightarrow 
    H_0(  \Gamma;\MM^{+}_{\lambda^{n}})  \rightarrow 0\,.
   \end{multline*}
This  implies that
$
\dim H_1(  \Gamma;\MM^{+}_{\lambda^{n}}) \geq \rk(\partial) =
\dim H_0(  \Gamma;\MM^{+}_{\lambda^{n}})
$ with equality if and only if $H_1(  \Gamma;\MM^{+}_{\lambda^{n}})$ has no
$(t-\lambda^n)$-torsion. 
This in turn is equivalent to $\Delta^{\alpha\otimes\beta^*}_1(\lambda^n)\neq0$.

Hence we have:
\[
\Delta^{\alpha\otimes\beta^*}_i(\lambda^n)=0 \Longleftrightarrow
\dim H^1(\Gamma_k,\mathcal{M}^-_{\lambda^{- n}}) > 
\dim H^0(\Gamma_k,\mathcal{M}^-_{\lambda^{- n}})\,.
\]
A similar argument applies if $\dim H^1(\Gamma_k,\mathcal{M}^+_{\lambda^{n}}) > 
\dim H^0(\Gamma_k,\mathcal{M}^+_{\lambda^{n}})$.
\end{proof}
\begin{remark}
Notice that $\Delta^{\alpha\otimes\beta^*}_1(t)\doteq\Delta_1^{\beta\otimes\alpha^*}(t^{-1})$, and hence 
\begin{multline*}
\big(\dim H^1(\Gamma_k,\mathcal{M}^-_{\lambda^{-n}}) >
\dim H^0(\Gamma_k,\mathcal{M}^-_{\lambda^{-n}})\big)
\Leftrightarrow
\Delta^{\alpha\otimes\beta^*}_1(\lambda^n)=0
\Leftrightarrow\\
\Delta^{\beta\otimes\alpha^*}_1(\lambda^{-n})=0
\Leftrightarrow
\big( \dim H^1(\Gamma_k,\mathcal{M}^+_{\lambda^{n}}) > 
\dim H^0(\Gamma_k,\mathcal{M}^+_{\lambda^{n}})\big)\,.
\end{multline*}
\end{remark}
There is a partial converse of Theorem~\ref{thm:necessary} which was proved in \cite{HP15}:
\begin{theorem}
Let $\alpha\co\Gamma_k\to\mathrm{SL}_a(\CC)$ and  
$\beta\co\Gamma_k\to\mathrm{SL}_b(\CC)$ be irreducible, $a+b=n$, and assume that $\alpha$ and $\beta$ are infinitesimal regular.

Assume that  $\Delta_0^{\alpha\otimes\beta^*}(\lambda^n)\neq0$ and that $\lambda ^n$ is a simple root of 
$\Delta_1^{\alpha\otimes\beta^*}(t)$. Then 
$\rho_{\lambda}\in R_n(\Gamma_k)$ can be deformed  to irreducible representations.
Moreover,  the character $\chi_{\lambda}\in X_n(\Gamma_k)$
belongs to precisely two irreducible components $Y$ and $Z$ of
$X_n(\Gamma)$. Both components  $Y$ and $Z$ have dimension
 $n-1$ and meet transversally at $\chi_{\lambda}$ along a subvariety of
dimension {$n-2$}. The component $Y$ contains characters of 
irreducible representations and $Z$
consists only of characters of reducible ones. 
\end{theorem}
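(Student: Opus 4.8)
The plan is to follow the strategy already used for Theorem~\ref{thm:sufficent_n=2}: first compute all the relevant cohomology of $\sln_{\Ad\rho_\lambda}$ from the decomposition \eqref{eq:sln-dec}, then produce a convergent deformation into irreducible representations by solving the formal integrability problem and invoking Artin's theorem, and finally read off the local structure of $X_n(\Gamma_k)$ from an \'etale slice at $\rho_\lambda$.

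First I would compute the cohomology summand by summand in \eqref{eq:sln-dec}. Since $\alpha,\beta$ are irreducible and infinitesimally regular, $H^0(\Gamma_k;\sln[a]_{\Ad\alpha})=H^0(\Gamma_k;\sln[b]_{\Ad\beta})=0$ while $\dim H^1(\Gamma_k;\sln[a]_{\Ad\alpha})=a-1$ and $\dim H^1(\Gamma_k;\sln[b]_{\Ad\beta})=b-1$; the trivial summand $\CC$ contributes $\dim H^0=\dim H^1=1$. The hypothesis $\Delta_0^{\alpha\otimes\beta^*}(\lambda^n)\neq0$ forces $H_0(\Gamma_k;\MM^+_{\lambda^n})=0$, and simplicity of the root $\lambda^n$ of $\Delta_1^{\alpha\otimes\beta^*}$ forces $\dim H_1(\Gamma_k;\MM^+_{\lambda^n})=1$, exactly by the long exact sequence used in the proof of Theorem~\ref{thm:necessary}. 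Via Kronecker duality \eqref{eqn:Kronecker} and the symmetry $\Delta_1^{\alpha\otimes\beta^*}(t)\doteq\Delta_1^{\beta\otimes\alpha^*}(t^{-1})$ this gives $\dim H^0(\Gamma_k;\MM^\pm_{\lambda^{\pm n}})=0$ and $\dim H^1(\Gamma_k;\MM^\pm_{\lambda^{\pm n}})=1$. Summing, $\dim H^0=1$ and $\dim H^1=n+1$; since $\Gamma_k$ has cohomological dimension $2$ and $\chi(C_k)=0$, the Euler characteristic then yields $\dim H^2=n$, split as $(a-1)+(b-1)+1+1$ over the four non-trivial summands.

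Next I would build the deformation into irreducibles. Choose a cocycle $u_1$ whose components $x^+\in H^1(\Gamma_k;\MM^+_{\lambda^n})$ and $x^-\in H^1(\Gamma_k;\MM^-_{\lambda^{-n}})$ are both non-zero and whose diagonal part vanishes, and claim $u_1$ is formally integrable. Using the obstruction calculus developed above (the classes $\zeta_{k+1}$), the first obstruction $[u_1\cup u_1]$ is computed from the Lie bracket; because $[\MM^+,\MM^+]=[\MM^-,\MM^-]=0$ its only contributions are $[x^+,x^-]\in H^2(\sln[a]_{\Ad\alpha})\oplus H^2(\sln[b]_{\Ad\beta})$ together with the cross terms with the (vanishing) diagonal part. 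The essential point is that simplicity of the root makes the induced cup/Massey pairing $H^1(\MM^+_{\lambda^n})\otimes H^1(\MM^-_{\lambda^{-n}})\to H^2(\sln[a]_{\Ad\alpha}\oplus\sln[b]_{\Ad\beta})$ nondegenerate, so that after rescaling $x^+$ against $x^-$ the obstructions $\zeta_{k+1}$ can be killed recursively, as in \cite[\S3]{Heusener-Porti-Suarez2001}. Artin's theorem \cite{Artin1968} then converts the formal solution into a convergent path $\rho_t$ with $\rho_0=\rho_\lambda$; since $x^+$ and $x^-$ are non-zero the subspaces $\CC^a\oplus0$ and $0\oplus\CC^b$ cease to be invariant, and as these are the only $\rho_\lambda$-invariant subspaces one checks $\rho_t$ is irreducible for small $t\neq0$. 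A generic such $\rho_t$ is infinitesimally regular, hence by Remark~\ref{rem:infinites-regular} a smooth point of $R_n(\Gamma_k)$ on a unique component, giving the component $Y$ of dimension $n-1$ whose generic character is irreducible.

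Finally, for the two-component picture I would pass to the \'etale slice at $\rho_\lambda$, whose stabilizer is $\mathrm{Stab}(\rho_\lambda)\cong\CC^*$ (the block scalars), in accordance with $\dim H^0=1$. Near $\chi_\lambda$ the character variety is modelled by $Q\sslash\CC^*$, where $Q\subset H^1$ is the quadratic cone cut out by the cup-product obstruction and $\CC^*$ acts with weight $0$ on the $(n-1)$-dimensional diagonal part and with opposite non-zero weights $+1,-1$ on $H^1(\MM^+_{\lambda^n})$ and $H^1(\MM^-_{\lambda^{-n}})$. As in Example~\ref{ex:GIT}.\ref{ex:GIT-C^*} the invariants are the $n-1$ diagonal coordinates together with the product $s=x^+x^-$, so the ambient local model is $\CC^n$. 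The reducible representations $(\mu^{b\varphi}\otimes\alpha')\oplus(\mu^{-a\varphi}\otimes\beta')$, with $\alpha',\beta'$ ranging over the $(a-1)$- and $(b-1)$-dimensional character families of $\alpha,\beta$ and $\mu$ near $\lambda$, form the component $Z\cong\{s=0\}$ of dimension $n-1$; the nondegeneracy of the pairing forces $Q$ to carry exactly one further $(n-1)$-dimensional branch, the closure $Y$ of the irreducible locus, meeting $Z$ transversally along the $(n-2)$-dimensional set $Y\cap Z$. I expect the main obstacle to be the middle step: proving that \emph{every} higher obstruction $\zeta_{k+1}$ vanishes for the chosen mixed cocycle. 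This is precisely where both quantitative hypotheses are indispensable, as $\Delta_0^{\alpha\otimes\beta^*}(\lambda^n)\neq0$ removes unwanted peripheral obstructions by making $H^0(\MM^\pm)=0$, while simplicity of $\lambda^n$ makes the relevant pairing nondegenerate, which is exactly what allows the recursion to close and simultaneously pins the quadratic cone down to two transversal branches.
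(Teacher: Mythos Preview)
Your cohomology computation in the first paragraph is correct, and the overall architecture (Luna/\'etale slice plus quadratic cone) matches the paper's sketch.  The gap is in your middle paragraph, where you try to integrate the specific cocycle $u_1=x^++x^-$ directly.

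You assert that nondegeneracy of the pairing
$H^1(\Gamma_k;\MM^+_{\lambda^n})\otimes H^1(\Gamma_k;\MM^-_{\lambda^{-n}})\to H^2(\Gamma_k;\sln[a]_{\Ad\alpha}\oplus\sln[b]_{\Ad\beta})$
lets you ``rescale $x^+$ against $x^-$'' to kill the obstructions.  This is backwards.  If that pairing is nondegenerate and $x^+,x^-\neq0$, then the second obstruction $[x^+\smallcup x^-]$ is \emph{nonzero}, and rescaling only multiplies it by a nonzero scalar; nothing is killed.  So your chosen $u_1$ (with vanishing diagonal part) is not even second-order integrable, and the recursion never starts.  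You have also omitted the cup products that actually govern the cone, namely
\[
H^1(\Gamma_k;\CC)\otimes H^1(\Gamma_k;\MM^\pm_{\lambda^{\pm n}})\longrightarrow H^2(\Gamma_k;\MM^\pm_{\lambda^{\pm n}})
\quad\text{and}\quad
H^1(\Gamma_k;\sln[a]_{\Ad\alpha})\otimes H^1(\Gamma_k;\MM^\pm_{\lambda^{\pm n}})\longrightarrow H^2(\Gamma_k;\MM^\pm_{\lambda^{\pm n}}),
\]
together with their $\sln[b]$ analogues.  It is the first of these whose nondegeneracy encodes the simple-root hypothesis (already in the case $a=b=1$ of Theorem~\ref{thm:sufficent_n=2} this is the only nontrivial obstruction, since there $H^2(\sln[a])=H^2(\sln[b])=0$).

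The paper's route, carried out in \cite{HP15}, does not attempt to integrate a preselected cocycle.  Instead one computes \emph{all} the above cup products, writes down the quadratic cone $Q\subset Z^1(\Gamma_k;\sln_{\Ad\rho_\lambda})$ explicitly, and checks that $Q$ has exactly two irreducible components of the right dimensions; Luna's slice theorem then transports this to the two-branch picture for $X_n(\Gamma_k)$ at $\chi_\lambda$.  The existence of an irreducible deformation is read off from the branch of $Q$ on which $x^+$ and $x^-$ are generically nonzero, rather than established beforehand by an Artin-type argument.  Your final paragraph is close to this, but it rests on the unjustified middle step; the correct order is to let the quadratic-cone computation do all the work.
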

\begin{proof}[Sketch of proof]
Use Luna's Slice Theorem, and study the quadratic cone of the representation $\rho_\lambda$ by identifying the second obstruction to integrability. This relies heavily on the hypothesis about the simple root of the Alexander polynomial.
\end{proof}

\subsection{Deformation of reducible metabelian representations}
In this subsection we will consider certain reducible metabelian representations and their deformations. The general assumption will be that $\alpha\in\CC^*$ is a zero of the Alexander polynomial of $k$, and hence $H_1(C_\infty;\CC)$ has a direct summand of the form $\CC[t^{\pm1}] / (t-\alpha)^{n-1}$, $n\in\ZZ$, $n>1$.

Recall that a knot group $\Gamma$ is isomorphic to the semi-direct product
$\Gamma\cong \Gamma'\rtimes\ZZ$.
Every metabelian representation of $\Gamma$ factors through 
the metabelian group $(\Gamma'/\Gamma'')\rtimes \ZZ$. Notice that
$H_1(C_\infty;\CC)\cong \CC\otimes \Gamma'/\Gamma''$. Hence we obtain a homomorphism 
\[
\Gamma\to (\Gamma'/\Gamma'')\rtimes \ZZ \to (\CC\otimes \Gamma'/\Gamma'')\rtimes\ZZ
\to \CC[t^{\pm1}] / (t-\alpha)^{n-1}\rtimes\ZZ\,.
\]
The multiplication on $\CC[t^{\pm1}] / (t-\alpha)^{n-1}\rtimes\ZZ$ is given by
$(p_1,n_1)(p_2,n_2) = (p_1 +t^{n_1} p_2,n_1+n_2)$.

Let $I_{n}\in\SL(n)$ and $N_{n}\in\GLn$ denote  the identity matrix and  
the upper triangular Jordan normal form of a nilpotent matrix of degree $n$ respectively.
 For later use we note the following lemma which follows easily from the Jordan normal form theorem:
\begin{lemma}\label{lem:iso}
Let $\alpha\in\CC^*$ be a nonzero complex number and let $\CC^n$ be the $\CC[t^{\pm1}]$-module 
with the action of $t^k$  given by
\begin{equation}\label{eq:action-J}
 t^k\,\mathbf{a} = \alpha^k\, \mathbf{a}\, J_{n}^k 
  \end{equation}
where $\mathbf a \in \CC^n$ and $J_{n}=I_n+N_n$. 
Then  $\CC^n\cong \CC[t^{\pm1}] / (t-\alpha)^{n}$ as  $\CC[t^{\pm1}]$-modules.
\end{lemma}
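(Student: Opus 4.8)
The plan is to recognize the given $\CC[t^{\pm1}]$-module structure as the one attached to a single Jordan block and then to invoke the structure theory of modules over the PID $\CC[t]$. First I would observe that a $\CC[t^{\pm1}]$-module structure on the vector space $\CC^n$ is the same datum as an invertible $\CC$-linear endomorphism $T$ of $\CC^n$, namely the action of $t$; here $T(\mathbf a)=\alpha\,\mathbf a J_n$ is right multiplication by $\alpha J_n$. Since $\alpha\neq0$ and $J_n=I_n+N_n$ is invertible, $T$ is invertible, so the $\CC[t]$-action indeed extends to $\CC[t^{\pm1}]$ and the module is well defined.

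Next I would analyze $T$ itself. Writing $T=\alpha(\mathrm{id}+R)$, where $R(\mathbf a)=\mathbf a N_n$ denotes right multiplication by $N_n$, one has $T-\alpha\,\mathrm{id}=\alpha R$. Because $N_n^{\,n}=0$ while $N_n^{\,n-1}\neq0$, the operator $R$ is nilpotent of index exactly $n$; hence the minimal polynomial of $T$ is $(X-\alpha)^n$, which has degree $n$ and therefore coincides with the characteristic polynomial. Equivalently, $T$ is a single Jordan block of size $n$ with eigenvalue $\alpha$, so $\CC^n$ is a \emph{cyclic} $\CC[t]$-module.

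To make the cyclicity explicit I would exhibit a generator. With the convention that $N_n$ has $1$'s on the superdiagonal, right multiplication sends $\mathbf e_i\mapsto\mathbf e_{i+1}$ for $i<n$ and $\mathbf e_n\mapsto0$, so $\mathbf e_1 R^{\,j}=\mathbf e_{j+1}$ for $0\le j\le n-1$ and $(T-\alpha\,\mathrm{id})^{n-1}\mathbf e_1=\alpha^{n-1}\mathbf e_n\neq0$. Thus $\mathbf e_1,T\mathbf e_1,\dots,T^{n-1}\mathbf e_1$ form a basis, and the evaluation map $\CC[t]\to\CC^n$, $p\mapsto p(T)\mathbf e_1$, is a surjection of $\CC[t]$-modules whose kernel is generated by the minimal polynomial $(t-\alpha)^n$. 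This yields $\CC[t]/\big((t-\alpha)^n\big)\cong\CC^n$ as $\CC[t]$-modules.

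Finally I would pass from $\CC[t]$ to $\CC[t^{\pm1}]$. Since $\alpha\neq0$, the image of $t$ in $\CC[t]/\big((t-\alpha)^n\big)$ is congruent to $\alpha$ and hence is a unit, so the natural map $\CC[t]/\big((t-\alpha)^n\big)\to\CC[t^{\pm1}]/\big((t-\alpha)^n\big)$ is an isomorphism. Composing the two isomorphisms gives $\CC^n\cong\CC[t^{\pm1}]/\big((t-\alpha)^n\big)$ as $\CC[t^{\pm1}]$-modules, as claimed. I do not expect any genuine obstacle here: the only points requiring care are that the nilpotency index of $R$ is \emph{exactly} $n$ (which is what forces cyclicity rather than a decomposition into several Jordan blocks), and that the hypothesis $\alpha\neq0$ is used twice, both to make $t$ act invertibly and to identify the $\CC[t]$- and $\CC[t^{\pm1}]$-quotients.
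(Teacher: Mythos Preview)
Your proof is correct and follows exactly the approach the paper indicates: the paper does not give a detailed proof but simply states that the lemma ``follows easily from the Jordan normal form theorem,'' which is precisely what you have spelled out. Your additional care in exhibiting the cyclic vector and in passing from $\CC[t]$ to $\CC[t^{\pm1}]$ makes explicit what the paper leaves implicit.
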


There is a direct method to construct a reducible metabelian representation of the group
$\CC[t^{\pm1}] / (t-\alpha)^{n-1} \rtimes\ZZ$ into $\GLn$ (see \cite[Proposition~3.13]{Boden-Friedl2008}).
A direct calculation gives that
\[ (\mathbf{a},0)\mapsto
\begin{pmatrix}
1 & \mathbf{a} \\
\mathbf0 & I_{n-1}
\end{pmatrix} ,\quad (0,1) \mapsto 
\begin{pmatrix}
\alpha & \mathbf{0}  \\
\mathbf0 & J_{n-1}^{-1} 
\end{pmatrix}
\] 
defines a faithful representation  $\CC[t^{\pm1}] / (t-\alpha)^{n-1}\rtimes\ZZ\to\GLn$.

Therefore, we obtain  a reducible, metabelian, non-abelian representation 
$\tilde\metrep\co\Gamma\to\GLn$ if
the Alexander module $H_1(C_\infty,\CC)$ has a direct summand  of the form 
$\CC[t^{\pm1}]\big/(t-\alpha)^s$ with $s\geq n-1 \geq 1$:
\[
\tilde \metrep\co\Gamma \to   
\CC[t^{\pm1}]\big/(t-\alpha)^s\rtimes\ZZ\to
\CC[t^{\pm1}]\big/(t-\alpha)^{n-1}\rtimes\ZZ\to\GLn
\]
given by
\begin{equation}\label{eq:rep-alt-gln}
 \tilde\metrep(\gamma) =
\begin{pmatrix}
1 & \tilde {\mathbf{z}}(\gamma)\\
0& I_{n-1}
\end{pmatrix}
\begin{pmatrix}
\alpha^{\varphi(\gamma)} & 0\\
0& J^{-\varphi(\gamma)}_{n-1}
\end{pmatrix}.
\end{equation}
It is easy to see that a map $\tilde\varrho\co\Gamma\to\GLn$ given by \eqref{eq:rep-alt-gln} is a homomorphism if and only if 
$\tilde{\mathbf{z}}\co\Gamma\to \CC^{n-1}$ is a cocycle 
i.e.\ for all $\gamma_1,\gamma_2\in\Gamma$ we have
\begin{equation} \label{eq:tildez}
\tilde{\mathbf{z}}(\gamma_1\gamma_2) = \tilde{\mathbf{z}}(\gamma_1)
+\alpha^{\varphi(\gamma_1)} \tilde{\mathbf{z}}(\gamma_2) J_{n-1}^{\varphi(\gamma_1)}\,.
\end{equation}
The unipotent matrices 
$J_{n}$ and $J_{n}^{-1}$ are similar: a direct calculation shows that $P_n J_n P_n^{-1} =J_n^{-1} $ where $P_n=(p_{ij})$, $p_{ij}=(-1)^j {j\choose i}$.
The matrix $P_n$ is upper triangular with $\pm1$ in the diagonal and $P_n^2$ is the identity matrix, and therefore $P_n=P_n^{-1}$.

Hence $\tilde\metrep$ is conjugate to a representation 
$\metrep\co\Gamma\to\GLn$ given by
\begin{equation}\label{eq:rep-gln}
\metrep(\gamma) =
\begin{pmatrix}
\alpha^{h(\gamma)} &  z(\gamma)\\
0& J^{h(\gamma)}_{n-1}
\end{pmatrix}
=\begin{pmatrix}
\alpha^{h(\gamma)}&z_1(\gamma)&z_2(\gamma)&\ldots&z_{n-1}(\gamma)\\
0&1&h_1(\gamma)&\ldots&h_{n-2}(\gamma)\\
\vdots&\ddots&\ddots&\ddots&\vdots\\
\vdots&&\ddots&1&h_1(\gamma)\\
0&\ldots&\ldots&0&1
\end{pmatrix}
\end{equation}
where $\mathbf{z}=(z_1,\ldots,z_{n-1})\co\Gamma\to\CC^{n-1}$ satisfies
\[
\mathbf{z}(\gamma_1\gamma_2) =
\alpha^{h(\gamma_1)}\mathbf{z}(\gamma_2) + \mathbf{z}(\gamma_1)J_{n-1}^{h(\gamma_2)}\,.
\]
It follows directly that $\mathbf{z}(\gamma)=\tilde{\mathbf{z}}(\gamma) P_{n-1} J_{n-1}^{h(\gamma)}$ and in particular $z_1= - \tilde z_1$.

We choose an $n$-th root $\lambda$ of $\alpha=\lambda^n$ and we define a reducible metabelian representation 
$\metrep_\lambda\co\Gamma\to\SL(n)$ by
\begin{equation}\label{eq:rep-sln}
\metrep_\lambda(\gamma) = \lambda^{- \varphi(\gamma)} \metrep(\gamma).
\end{equation}

The following theorem 
generalizes the results of \cite{AHJ2010} where the case $n=3$ was investigated. It also applies in the case $n=2$ which was studied in
\cite{BenAbdelghani2000} and~\cite[Theorem~1.1]{Heusener-Porti-Suarez2001}. 
\begin{theorem}\label{thm:mainthm} Let $k$ be a knot in the $3$-sphere $S^3$.
If the $(t-\alpha)$-torsion $\tau_\alpha$ of the Alexander module 
$H_1(C;\CC[t^{\pm1}])$
is cyclic of the form $\CC[t^{\pm1}]\big/(t-\alpha)^{n-1}$, $n\geq 2$, 
then for each $\lambda \in \CC^*$ such that $\lambda^n=\alpha$ there exists a reducible metabelian representation 
$\metrep_\lambda\co\Gamma_k\to\SL(n)$.
Moreover, the representation $\metrep_\lambda$ is a smooth point of the representation variety $R_{n}(\Gamma)$.
It is contained in a unique $(n^2+n-2)$-dimensional component $R_{\metrep_\lambda}$ of $R_n(\Gamma)$
which contains irreducible non-metabelian representations which deform $\metrep_\lambda$. 
\end{theorem}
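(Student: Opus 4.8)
The plan is to prove the four assertions in turn: existence of $\metrep_\lambda$, the vanishing $H^0(\Gamma_k;\sln_{\Ad\metrep_\lambda})=0$, the key equality $\dim H^1(\Gamma_k;\sln_{\Ad\metrep_\lambda})=n-1$, and finally the presence of irreducible non-metabelian deformations. Throughout write $C$ for the knot exterior, $\partial C=T^2$, and $A:=H_1(C_\infty;\CC)$ for the Alexander module.

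\emph{Existence.} The construction \eqref{eq:rep-alt-gln}--\eqref{eq:rep-sln} reduces existence to producing a cocycle $\tilde{\mathbf z}\co\Gamma_k\to\CC^{n-1}$ satisfying \eqref{eq:tildez}, where $\CC^{n-1}$ carries the action $\gamma\cdot\mathbf a=\alpha^{\varphi(\gamma)}\mathbf a\,J_{n-1}^{\varphi(\gamma)}$; by Lemma~\ref{lem:iso} this module is $N:=\CC[t^{\pm1}]/(t-\alpha)^{n-1}$. Restricting such a cocycle to the commutator subgroup $\Gamma_k'$ yields a $\CC[t^{\pm1}]$-module homomorphism $A\to N$. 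Since $A$ is a torsion module over the PID $\CC[t^{\pm1}]$ it splits as the direct sum of its primary parts, and the hypothesis identifies the $(t-\alpha)$-primary part with $N$; hence the projection $A\twoheadrightarrow N$ is a surjective module map. I would extend it over $\Gamma_k=\Gamma_k'\rtimes\ZZ$ using the semidirect product structure (the equivariance making the extension consistent) to obtain $\tilde{\mathbf z}$, and thus $\metrep_\lambda$. That $\metrep_\lambda$ is valued in $\SL_n$ follows from $\det\metrep(\gamma)=\alpha^{\varphi(\gamma)}$ and $\alpha=\lambda^n$, and $\alpha\neq1$ because $\Delta_k(1)=\pm1$.

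\emph{Vanishing of $H^0$ and reduction to $\dim H^1$.} Since $\metrep_\lambda$ is block upper triangular for the flag $0\subset\CC e_1\subset\CC^n$, an $X\in\sln$ with $\Ad_{\metrep_\lambda(\gamma)}X=X$ for all $\gamma$ is analysed blockwise: using $\alpha\neq1$ the bottom-left block vanishes, using that $\mathbf z|_{\Gamma_k'}$ spans $\CC^{n-1}$ the remaining blocks are forced scalar, and the trace condition then gives $X=0$. Thus $H^0(\Gamma_k;\sln_{\Ad\metrep_\lambda})=0$. Granting $\dim H^1=n-1$ proved below, Proposition~\ref{prop:smoothpoint} with $k=1$ applies: $\metrep_\lambda$ is a smooth point on a unique component of dimension $n^2-1+(n-1)-\dim H^0=n^2+n-2$, as claimed.

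\emph{The crux: $\dim H^1=n-1$.} First, $\metrep_\lambda|_{\pi_1\partial C}$ is a regular point of $R_n(\ZZ\oplus\ZZ)$: the meridian maps to a non-derogatory matrix (eigenvalue $\lambda^{n-1}$ with a single block, and $\lambda^{-1}$ as one $(n-1)$-Jordan block, distinct since $\alpha\neq1$), so its centralizer in $\sln$ has dimension $n-1$, forcing equality in \eqref{eq:boundary} and $\dim H^1(\partial C;\sln_{\Ad\metrep_\lambda})=2(n-1)$. The half-lives--half-dies argument from the proof of Proposition~\ref{prop:smoothpoint} then gives $\dim H^1(\Gamma_k;\sln_{\Ad\metrep_\lambda})\ge n-1$. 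For the reverse inequality I use the parabolic filtration of $\sln_{\Ad\metrep_\lambda}$ with graded pieces the nilradical $\mathfrak u_+\cong\CC[t^{\pm1}]/(t-\alpha)^{n-1}$, the unipotent Levi $\mathfrak l$, and $\mathfrak u_-\cong\CC[t^{\pm1}]/(t-\alpha^{-1})^{n-1}$. Each piece is a $\CC[t^{\pm1}]$-module, so its cohomology is computed by the universal coefficient sequence over $\CC[t^{\pm1}]$ applied to $C_\infty$; using $H_0(C_\infty)=\CC[t^{\pm1}]/(t-1)$, $H_1(C_\infty)=A$, $H_{\ge2}(C_\infty)=0$, the cyclicity hypothesis, and the symmetry $\tau_{\alpha^{-1}}\cong\CC[t^{\pm1}]/(t-\alpha^{-1})^{n-1}$, one finds $\dim H^1=\dim H^2=n-1$ for $\mathfrak u_\pm$ and $\dim H^0(\mathfrak l)=\dim H^1(\mathfrak l)=n-1$, $H^2(\mathfrak l)=0$. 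Feeding these into the two long exact sequences of the filtration, the vanishing $H^0(\sln)=0$ forces the connecting map $H^0(\mathfrak l)\to H^1(\mathfrak u_+)$ to be an isomorphism, and a bookkeeping with Euler characteristics reduces the desired equality to the surjectivity of the connecting map $\partial\co H^1(\mathfrak u_-)\to H^2(\mathfrak p)$. This last point is the main obstacle: $\partial$ is cup product with the extension class carried by the defining cocycle $\mathbf z$, and I expect the cyclicity of $\tau_\alpha$ to be exactly what forces $\mathbf z$ to generate enough cohomology for $\partial$ to be onto. Identifying this map via the twisted Alexander module and the trace pairing between $\mathfrak u_+$ and $\mathfrak u_-$ underlying \eqref{eqn:Kronecker}, and checking its rank, is the technical heart of the argument.

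\emph{Irreducible non-metabelian deformations.} Being a smooth point, $\metrep_\lambda$ lies on a single irreducible component $R_{\metrep_\lambda}$ of dimension $n^2+n-2$, and every cocycle is integrable by Lemma~\ref{lem:smoothness}. It remains to show the generic representation on this component is irreducible and non-metabelian. The reducible representations in $R_{\metrep_\lambda}$, and those factoring through $\Gamma_k/\Gamma_k''$, each form a Zariski-closed subset; I would bound their dimension and show it is strictly smaller than $n^2+n-2$. As the component is irreducible, the irreducible non-metabelian representations then form a dense open subset whose closure contains $\metrep_\lambda$, giving the asserted deformations. The dimension estimate for the metabelian and reducible loci is the one nontrivial input here, and follows the pattern of the cases $n=2,3$ treated in \cite{Heusener-Porti-Suarez2001, AHJ2010}.
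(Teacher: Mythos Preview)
Your overall strategy matches the paper's exactly: establish the cohomological input $H^0(\Gamma_k;\sln_{\Ad\metrep_\lambda})=0$ and $\dim H^1(\Gamma_k;\sln_{\Ad\metrep_\lambda})=n-1$, then invoke Proposition~\ref{prop:smoothpoint}. The paper itself defers the cohomology computation entirely to \cite{BenAH15}, so your outline via the parabolic filtration $0\subset\mathfrak u_+\subset\mathfrak p\subset\sln$ is in fact more explicit than what appears here. Your dimension claims for the graded pieces are correct: the modules $\mathfrak u_\pm$ and $\mathfrak l\cong\gln[n-1]$ are genuine $\CC[t^{\pm1}]$-modules (the action on associated graded factors through $\varphi$), and universal coefficients over the PID $\CC[t^{\pm1}]$ together with the cyclicity hypothesis on $\tau_\alpha$ (and its mirror $\tau_{\alpha^{-1}}$, via the symmetry $\Delta_k(t)\doteq\Delta_k(t^{-1})$) give exactly the numbers you state. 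You have also correctly located the crux: everything reduces to the surjectivity of the connecting map $\partial\co H^1(\mathfrak u_-)\to H^2(\mathfrak p)$, which is indeed governed by cup product with the extension class represented by $\mathbf z$, and this is where the cyclicity hypothesis does its real work.

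Two small remarks. First, for existence the paper's route is slightly cleaner than yours: rather than building a cocycle and extending it over the semidirect product, one simply composes the chain of surjections $\Gamma_k\to(\Gamma_k'/\Gamma_k'')\rtimes\ZZ\to\CC[t^{\pm1}]/(t-\alpha)^{n-1}\rtimes\ZZ$ with the explicit faithful representation of the last group into $\GLn$ displayed before \eqref{eq:rep-alt-gln}; no extension argument is needed. Second, your boundary argument for the lower bound $\dim H^1\ge n-1$ is fine, but note that $H^0(\pi_1\partial C;\sln)$ is the simultaneous centralizer of $\metrep_\lambda(m)$ and $\metrep_\lambda(\ell)$; your computation of the centralizer of the meridian alone gives the upper bound $n-1$, which combined with the general lower bound in \eqref{eq:boundary} yields equality.
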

The main part of the proof of this theorem is a cohomological calculation \cite{BenAH15}:
for the representation $\metrep_\lambda\co\Gamma\to\SL(n)$ we have
$H^0(\Gamma;\sln_{\Ad\circ\metrep_\lambda})=0$ and
\[
\dim H^1(\Gamma;\sln_{\Ad\circ\metrep_\lambda})=\dim H^2(\Gamma;\sln_{\Ad\circ\metrep_\lambda})=n-1\,.
\]
Then we apply Proposition~\ref{prop:smoothpoint}.

\begin{remark}
Let $\rho_\lambda\co\Gamma\to \SL(n)$ be the diagonal representation given by 
$\rho_\lambda(m) =  \mathrm{diag}(\lambda^{n-1},\lambda^{-1},\ldots,\lambda^{-1})\in\SL(n)$ where $m$ is a meridian of $k$. 
The orbit $\mathcal O (\rho_\lambda)$ of
$\rho_\lambda$ under the action of conjugation of $\SL(n)$ is contained in the closure 
$\overline{\mathcal O (\metrep_\lambda)}$. Hence $\metrep_\lambda$ and $\rho_\lambda$ project to the same point $\chi_\lambda$ of the variety of characters 
$X_n(\Gamma_k)= R_n(\Gamma_k)\sslash \SL(n)$.

It would be natural to study the local picture of the variety of characters
$X_n(\Gamma_k)= R_n(\Gamma_k)\sslash \SL(n)$ at $\chi_\lambda$ as done in 
\cite[\S\ 8]{Heusener-Porti2005}. Unfortunately, there are much more technical difficulties since in this case the quadratic cone $Q(\rho_\lambda)$ coincides with the Zariski tangent space 
$Z^1(\Gamma; \sln_{\Ad \rho_\lambda})$. Therefore the third obstruction has to be considered. 
\end{remark}

\subsection{The irreducible representation $r_n\co\SL(2)\to\SL(n)$}
It is interesting to study the behavior of  representations
$\rho\in R_2(\Gamma)$  under the composition with
the $n$-dimensional, irreducible, rational representation 
$r_n\co\SL(2)\to\SL(n)$. The representation $r_n$ is equivalent to $(n-1)$-fold symmetric power 
$\mathrm{Sym}^{n-1}$ of the standard representation (see \cite{Springer1977,Fulton-Harris} and \cite{HM14} for more details). In particular, $r_1$ is trivial, $r_2$ is equivalent to the standard representation, and
$r_3$ is equivalent to $\Ad\co\SL(2)\to\mathrm{O}(\sln[2])\subset \SL(3)$. If $k$ is odd then
$r_k$ is not injective since it factors trough the projection $\SL(2)\to\mathrm{PSL}(2)$.
W.~M\"uller \cite{Muller} studied the case where $\rho\co\pi_1(M)\to\SL(2)$ is the lift of the holonomy representation of a compact hyperbolic manifold. This study was extended by
P.~Menal-Ferrer and J.~Porti \cite{Menal-FerrerPorti2012,Menal-FerrerPorti2014} to the case of
non-compact finite volume hyperbolic manifolds. 
(For more details see Section~\ref{subsec:distComp}.)

In \cite{HM14} the authors studied the case related to Theorem~\ref{thm:sufficent_n=2}.
Let $\Gamma_k$ be a knot group. We define $\rho_{n,\lambda}\co\Gamma_k\to\SL(n)$ by
$\rho_{n,\lambda} := r_n\circ \rho_\lambda$ where $\rho_\lambda$
is given by Equation~\eqref{eq:n=2}. 

\begin{proposition}\label{prop:irred-exist}
Let $k\subset S^3$ be a knot, and  assume that
$\rho_0\co\Gamma_k\to\SL(2)$ is irreducible. 
Then $R_n(\Gamma_k)$ contains irreducible representations.
\end{proposition}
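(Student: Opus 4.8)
The plan is to obtain the required representation as a composition $r_n\circ\rho$, where $\rho\co\Gamma_k\to\SL(2)$ is a suitably generic irreducible representation. The key observation is that if $\rho$ has \emph{Zariski-dense} image in $\SL(2)$, then $r_n\circ\rho$ is automatically irreducible: any $\rho(\Gamma_k)$-invariant subspace $W\subseteq\CC^n\cong\mathrm{Sym}^{n-1}(\CC^2)$ has stabilizer $\{g\in\SL(2)\mid r_n(g)W\subseteq W\}$ a Zariski-closed subgroup of $\SL(2)$ containing the dense subgroup $\rho(\Gamma_k)$, hence equal to all of $\SL(2)$; since $r_n\cong\mathrm{Sym}^{n-1}$ is an irreducible $\SL(2)$-module, $W$ must be $0$ or $\CC^n$. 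Thus the whole problem reduces to upgrading the \emph{given} irreducible representation to one with Zariski-dense image.

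First I would recall that an irreducible $\rho\co\Gamma_k\to\SL(2)$ fails to have Zariski-dense image exactly when its image is, up to conjugacy, contained in the normalizer $N(T)$ of a maximal torus (the binary dihedral case) or is finite (the binary polyhedral case), and that both families are \emph{small}. A binary dihedral representation induces on the Alexander module $H_1(C_\infty;\ZZ)=\Gamma_k'/\Gamma_k''$ a character $\xi\to T\cong\CC^*$ which is inverted by the meridian action $t$; then $\xi\big((t+1)x\big)=\xi(tx)\xi(x)=1$, so $\xi$ factors through the finite group $H_1(C_\infty;\ZZ)/(t+1)$, which has order $|\Delta_k(-1)|$ (finite, since the knot determinant $\Delta_k(-1)$ is odd, hence nonzero). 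Therefore there are only finitely many binary dihedral characters. Finite-image representations factor through finite quotients of $\Gamma_k$; as $\Gamma_k$ is finitely generated it has only countably many finite quotients, each carrying finitely many irreducible $\SL(2)$-characters, so the finite-image characters form a countable set. Altogether, the irreducible characters \emph{without} Zariski-dense image constitute a countable subset of $X_2(\Gamma_k)$.

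To conclude, I would invoke the fact that the component $C$ of $X_2(\Gamma_k)$ through the given character $\chi_{\rho_0}$ satisfies $\dim C\geq 1$. This is the $n=2$, one-cusped instance of the Poincar\'e--Lefschetz duality computation carried out in the proof of Proposition~\ref{prop:smoothpoint}: irreducibility gives $H^0(\Gamma_k;\sln[2]_{\Ad\rho_0})=0$, and one always has $\dim H^1(\Gamma_k;\sln[2]_{\Ad\rho_0})\geq\tfrac12\dim H^1(\partial C_k;\sln[2]_{\Ad\rho_0})\geq 1$, which forces a positive-dimensional component of irreducible characters. A component of dimension $\geq 1$ is uncountable, and the reducible characters on it form a proper closed subset, so $C$ carries uncountably many irreducible characters, of which only countably many can lie in the bad set above. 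Choosing any irreducible $\rho$ on $C$ with Zariski-dense image and forming $r_n\circ\rho$ produces the desired irreducible element of $R_n(\Gamma_k)$. The main obstacle is precisely this dimension bound: the naive obstruction count (integrating cocycles through $H^2$) only yields $\dim_\rho R_2(\Gamma_k)\geq\dim Z^1-\dim H^2=3$, i.e.\ $\dim X_2\geq 0$, and the extra unit of dimension comes from the symmetry of the cup-product pairing on $H^2$ --- the same duality phenomenon that makes $\iota^*$ injective in Proposition~\ref{prop:smoothpoint}.
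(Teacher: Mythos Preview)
Your proposal is correct and follows the same core strategy as the paper: compose with $r_n$, observe that Zariski-density of $\rho(\Gamma_k)$ in $\SL(2)$ forces $r_n\circ\rho$ to be irreducible, argue that the non-dense irreducible representations are sparse, and invoke Thurston's lower bound on the dimension of the component through $\rho_0$.

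The execution differs in two places worth noting. First, where the paper cites Klassen \cite{Klassen91} for the finiteness of irreducible dihedral characters and the explicit list $A_4^{\SL(2)}, S_4^{\SL(2)}, A_5^{\SL(2)}$ for the remaining case (yielding finitely many $3$-dimensional orbits inside a $4$-dimensional component of $R_2$, hence a proper Zariski-closed bad locus), you give a self-contained argument for the dihedral case via the Alexander module modulo $(t+1)$ and handle the finite-image case by countability of finite quotients; you then conclude by a cardinality argument in $X_2$ (uncountable positive-dimensional component versus countable bad set) rather than the paper's dimension count in $R_2$. Your dihedral argument is a pleasant alternative to citing Klassen, and the countability route is softer but perfectly adequate here. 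Second, the paper simply cites \cite[3.2.1]{CS83} for the $4$-dimensional component of $R_2(\Gamma_k)$, whereas you attempt to sketch the duality mechanism behind it. Your final paragraph is honest about the gap: the half-lives-half-dies inequality $\dim H^1(\Gamma_k;\sln[2]_{\Ad\rho_0})\geq 1$ by itself only bounds $\dim Z^1$ from below, not $\dim_{\rho_0} R_2$, and the missing step is precisely Thurston's observation that the obstruction map is governed by the symmetric cup-product pairing. Since the paper also treats this as an external input, your proof is on equal footing.
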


\begin{proof}
It was proved by Thurston that there is  at least a $4$-dimensional irreducible component 
$R_0\subset R_2(\Gamma_k)$ which contains the irreducible representation 
$\rho_0$ (see \cite[3.2.1]{CS83}).

Let $\Gamma$ be a discrete group and let $\rho\co\Gamma\to\SL(2)$ be an irreducible representation. By virtue of Burnside's Theorem on matrix algebras,
being irreducible is an open property for representations in $R_n(\Gamma)$.
If the image $\rho(\Gamma)\subset\SL(2)$ is Zariski-dense then the representation 
$\rho_n := r_n\circ\rho\in R_n(\Gamma)$ is irreducible. 
In order to prove the proposition we will show that there is a neighborhood 
$U=U(\rho_0)\subset R_0\subset R_2(\Gamma_k)$ such that
$\rho(\Gamma)\subset\SL(2)$ is Zariski-dense for each irreducible $\rho\in U$.

Let now $\rho\co\Gamma_k\to\SL(2)$ be any irreducible representation and let 
$G\subset\SL(2)$ denote the Zariski-closure of $\rho(\Gamma_k)$. 
Suppose that $G\neq\SL(2)$.
Since $\rho$ is irreducible it follows that $G$ is, up to conjugation, not a subgroup of upper-triangular matrices of $\SL(2)$. 
Then by \cite[Sec.~1.4]{Kovacic1986} and \cite[Theorem~4.12]{Kaplansky1957}
there are, up to conjugation, only two cases left:
\begin{itemize}
\item $G$ is a subgroup of the infinite dihedral group
\[ D_\infty =\Big\{ \big(\begin{smallmatrix} \alpha & 0\\ 0 &\alpha^{-1}\end{smallmatrix}\big)
\,\big|\, \alpha \in\CC^*\Big\} \cup
\Big\{ \big(\begin{smallmatrix} 0 & \alpha\\ -\alpha^{-1}&0\end{smallmatrix}\big)
\,\big|\, \alpha \in\CC^*\Big\}\,.\]

\item $G$ is one of the groups $A_4^{\SL(2)}$ (the tetrahedral group), 
$S_4^{\SL(2)}$ (the octahedral group) or $A_5^{\SL(2)}$ (the icosahedral group). 
These groups are the preimages  in $\SL(2)$ of the subgroups 
$A_4$, $S_4$, $A_5 \subset \mathrm{PSL}(2,\CC)$.
\end{itemize}

By a result of E.~Klassen \cite[Theorem~10]{Klassen91}
there are up to conjugation only finitely many irreducible representations of a knot group into $D_\infty$.
Moreover, the orbit of each of those irreducible representation is $3$-dimensional.
Therefore, there exists a Zariski-open subset $U\subset R_0$ which does not contain representations of $\Gamma_k$ into $D_\infty$.

For the second case  there are up to conjugation only finitely many irreducible representations of $\Gamma_k$ onto the subgroups
$A_4^{\SL(2)}$, $S_4^{\SL(2)}$ and $A_5^{\SL(2)}$. As in the dihedral case these finitely many orbits are closed and $3$-dimensional. Hence all the irreducible $\rho\in R_0$ such that $r_n\circ\rho$ is reducible 
are contained in a Zariski-closed subset of $R_0$. 
Hence generically $\rho_n = r_n\circ\rho$ is irreducible for $\rho\in R_0$.
\end{proof}

\begin{remark}
Recall that a finite group has only finitely many irreducible representations 
(see \cite{Serre,Fulton-Harris}). Hence, the restriction of $r_n$ to the groups 
$A_4^{\SL(2)}$, $S_4^{\SL(2)}$ and $A_5^{\SL(2)}$ is reducible, for all but finitely many 
$n\in\mathbb{N}$.
\end{remark}

Let $k\subset S^3$ be a knot, and let $\lambda^2\in\CC$ a simple root of $\Delta_k(t)$.
We let $R_\lambda\subset R_n(\Gamma_k)$ denote the 4-dimensional component which maps onto the component $X_\lambda\subset X_2(\Gamma_k)$ under $t\co R_n(\Gamma)\to X_n(\Gamma)$ 
(see Theorem~\ref{thm:sufficent_n=2}). 
We obtain:
\begin{corollary}\label{cor:irred-exist}
Let $k\subset S^3$ be a knot, and $\lambda^2\in\CC$ a simple root of $\Delta_k(t)$.
Then the diagonal representation 
$\rho_{\lambda,n}=r_n\circ \rho_{\lambda}\co\Gamma_k\to \SL(n)$ is the limit of irreducible representations in $R_n(\Gamma_k)$. More precisely, generically a representation 
$\rho_n = r_n\circ \rho$, $\rho\in R_\lambda$, is irreducible.
\end{corollary}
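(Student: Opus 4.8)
The plan is to combine Theorem~\ref{thm:sufficent_n=2} with the genericity argument already carried out in Proposition~\ref{prop:irred-exist}, applied to the specific component $R_\lambda$, and then to pass to a Euclidean limit. Since $\lambda^2$ is a simple root of $\Delta_k(t)$, Theorem~\ref{thm:sufficent_n=2} guarantees that $\rho_\lambda\co\Gamma_k\to\SL(2)$ is a limit of irreducible representations whose characters fill a Zariski-open subset of the component $X_\lambda\subset X_2(\Gamma_k)$, and (by the discussion preceding the corollary) the corresponding component $R_\lambda\subset R_2(\Gamma_k)$ is irreducible and $4$-dimensional. As these approximating irreducible representations lie in $R_\lambda$, and $R_\lambda$ is Zariski-closed (being an irreducible component of the affine variety $R_2(\Gamma_k)$, hence Euclidean-closed), we obtain $\rho_\lambda\in R_\lambda$.

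First I would show that the locus $R_\lambda^{\mathrm{zd}}\subset R_\lambda$ of representations with Zariski-dense image is Zariski-open and dense in $R_\lambda$. By Burnside's theorem irreducibility is an open condition, so the irreducible locus in $R_\lambda$ is open; it is nonempty by Theorem~\ref{thm:sufficent_n=2}, hence dense in the irreducible variety $R_\lambda$. Among irreducible $\rho$ whose Zariski closure $G:=\overline{\rho(\Gamma_k)}$ is not all of $\SL(2)$, the case analysis recalled in the proof of Proposition~\ref{prop:irred-exist} leaves only the possibilities $G\subseteq D_\infty$ or $G\in\{A_4^{\SL(2)},S_4^{\SL(2)},A_5^{\SL(2)}\}$. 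By Klassen's theorem each of these accounts for only finitely many conjugacy classes, each contributing a single $3$-dimensional orbit, so their union is a Zariski-closed subset of dimension at most $3$. As $R_\lambda$ has dimension $4$, this exceptional locus is proper, and therefore $R_\lambda^{\mathrm{zd}}$ is Zariski-open and dense.

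Next I would pass to the limit. Since $R_\lambda^{\mathrm{zd}}$ is Zariski-dense in the irreducible variety $R_\lambda$, it is also dense in the Euclidean topology; combined with $\rho_\lambda\in R_\lambda$ this yields a sequence $\rho_j\in R_\lambda^{\mathrm{zd}}$ with $\rho_j\to\rho_\lambda$. For each $j$ the image $\rho_j(\Gamma_k)$ is Zariski-dense in $\SL(2)$, so by the argument of Proposition~\ref{prop:irred-exist} the composition $r_n\circ\rho_j\in R_n(\Gamma_k)$ is irreducible. The assignment $\rho\mapsto r_n\circ\rho$ is induced by the rational, hence continuous, representation $r_n\co\SL(2)\to\SL(n)$, so $r_n\circ\rho_j\to r_n\circ\rho_\lambda=\rho_{\lambda,n}$. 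This exhibits $\rho_{\lambda,n}$ as a limit of irreducible representations in $R_n(\Gamma_k)$, and the same discussion shows that $\rho_n=r_n\circ\rho$ is irreducible for every $\rho$ in the open dense set $R_\lambda^{\mathrm{zd}}$, that is, generically on $R_\lambda$.

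The main obstacle is purely a matter of dimension bookkeeping: one must ensure that the exceptional representations (dihedral and the finite primitive subgroups) genuinely cannot fill the $4$-dimensional component $R_\lambda$. This is exactly where Theorem~\ref{thm:sufficent_n=2} is indispensable, since it supplies both the dimension $4$ and the irreducibility of $R_\lambda$, while Klassen's finiteness results confine the exceptional locus to dimension at most $3$. Once these counts are in place, the openness of irreducibility together with the continuity of $r_n$ makes the remaining steps routine.
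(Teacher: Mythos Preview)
Your proposal is correct and follows essentially the same route as the paper: the corollary is stated as a direct consequence of Proposition~\ref{prop:irred-exist} applied to the specific $4$-dimensional component $R_\lambda$ furnished by Theorem~\ref{thm:sufficent_n=2}, and you have simply written out in detail the argument the paper leaves implicit in ``We obtain''. Your extra care in checking that $\rho_\lambda\in R_\lambda$ (via closedness of the component) and in passing from Zariski-density to a Euclidean limit is sound and matches the intended logic.
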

Corollary~\ref{cor:irred-exist} can be made more precise (see \cite{HM14}):
\begin{theorem}\label{thm:smoothR_n}
If $\lambda^2$ is a simple root of $\Delta_k(t)$ and if
 $\Delta_k(\lambda^{2i})\ne0$ for all $2\leq i \leq n-1$ then the reducible diagonal representation
$\rho_{\lambda,n} =r_n\circ \rho_\lambda$ is a limit of irreducible representations.
More precisely, there is a unique $(n+2)(n-1)$-dimensional component 
$R_{\lambda,n}\subset R_n(\Gamma_k)$ which contains $\rho_{\lambda,n}$ and irreducible representations.
 \end{theorem}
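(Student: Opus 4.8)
The plan is to realise $\rho_{\lambda,n}$ as a limit of the irreducible representations produced in Corollary~\ref{cor:irred-exist} and to pin down the component through them by verifying the hypothesis of Proposition~\ref{prop:smoothpoint}. By the simple--root assumption and Theorem~\ref{thm:sufficent_n=2}, $\rho_\lambda$ lies in the closure of a $4$--dimensional component $R_\lambda$ of $R_2(\Gamma_k)$ whose generic point $\rho$ is irreducible, has Zariski--dense image, and is infinitesimally regular, so $\dim H^1(\Gamma_k;\sln[2]_{\Ad\rho})=1$. For such $\rho$ the composite $\sigma_\rho:=r_n\circ\rho$ is irreducible (Proposition~\ref{prop:irred-exist}), and $\sigma_\rho\to\rho_{\lambda,n}$ as $\rho\to\rho_\lambda$; this already gives that $\rho_{\lambda,n}$ is a limit of irreducibles. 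The theorem then follows once I show that a generic $\sigma_\rho$ is \emph{infinitesimally regular}, i.e. $\dim H^1(\Gamma_k;\sln_{\Ad\sigma_\rho})=n-1$: Proposition~\ref{prop:smoothpoint} (a single boundary torus, $k=1$) makes $\sigma_\rho$ a smooth point on a unique component of dimension $n^2-1+(n-1)-\dim H^0=n^2+n-2=(n+2)(n-1)$, where $\dim H^0=0$ since $\sigma_\rho$ is irreducible. Note the component is strictly larger than the $r_n$--image of $R_\lambda$; the extra deformation directions are supplied by this smoothness, not by the construction itself.

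For the cohomological core I would decompose the adjoint module under $r_n=\operatorname{Sym}^{n-1}$ by Clebsch--Gordan, obtaining an isomorphism of $\Gamma_k$--modules
\[
\sln_{\Ad\sigma_\rho}\;\cong\;\bigoplus_{j=1}^{n-1} W_j ,
\]
where $W_j$ is the $(2j{+}1)$--dimensional irreducible $\SL(2)$--module viewed as a $\Gamma_k$--module via $\rho$; in particular $W_1=\sln[2]_{\Ad\rho}$. It then suffices to prove $\dim H^1(\Gamma_k;W_j)=1$ for every $j$. The case $j=1$ is the infinitesimal regularity of $\rho$ above. For each $j$ the peripheral restriction is decisive: for $\rho$ near $\rho_\lambda$ the meridian $\rho(m)$ has distinct eigenvalues $\mu^{\pm1}$, so on the boundary torus only the zero--weight line of $W_j$ has trivial monodromy, giving $\dim H^0(\partial C_k;W_j)=1$ and $\dim H^1(\partial C_k;W_j)=2$. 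The Poincar\'e--Lefschetz duality argument of Proposition~\ref{prop:smoothpoint} then yields $\operatorname{rk}\bigl(H^1(\Gamma_k;W_j)\to H^1(\partial C_k;W_j)\bigr)=\tfrac12\dim H^1(\partial C_k;W_j)=1$, whence $\dim H^1(\Gamma_k;W_j)\ge 1$, with equality exactly when this restriction is injective.

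The hard part will be this injectivity --- the absence of interior classes --- and this is precisely where the arithmetic hypotheses on $\Delta_k$ enter. A kernel class would be detected by $(t-\mu^{2m})$--torsion in a twisted Alexander module, i.e. by $\mu^{2m}$ being a root of $\Delta_k$ (or $=1$) for some weight $m\in\{1,\dots,j\}$. As $\rho\to\rho_\lambda$ one has $\mu^{2m}\to\lambda^{2m}$: the simple--root hypothesis disposes of $m=1$ through the $W_1$--computation, while the assumption $\Delta_k(\lambda^{2i})\ne 0$ for $2\le i\le n-1$ guarantees, by continuity and isolation of roots, that $\mu^{2m}$ avoids every root of $\Delta_k$ for $2\le m\le n-1$ on a neighbourhood of $\rho_\lambda$ in $R_\lambda$. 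Hence the restriction is injective, $\dim H^1(\Gamma_k;W_j)=1$ for all $j$, and summing gives $\dim H^1(\Gamma_k;\sln_{\Ad\sigma_\rho})=n-1$. I expect this non--resonance step to be the main obstacle, since it requires identifying the relevant twisted Alexander polynomials and controlling them uniformly as $\rho$ approaches the singular limit.

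To assemble the component, I would use that $R_\lambda$ is irreducible: the Zariski--open set $U\subset R_\lambda$ of good $\rho$ is irreducible, so the family $\{\sigma_\rho:\rho\in U\}$ and its $\SL(n)$--orbits lie in a single component $R_{\lambda,n}$, whose generic point is smooth of dimension $(n+2)(n-1)$ and whose closure contains $\rho_{\lambda,n}=\lim_{\rho\to\rho_\lambda}\sigma_\rho$. Uniqueness is then a local statement at the singular point $\rho_{\lambda,n}$: the hypotheses $\Delta_k(\lambda^{2i})\ne 0$ ensure that, beyond the resonances at weights $m=0,\pm1$ forced by the simple root, no further cohomology appears at $\rho_{\lambda,n}$, so that the integrable directions in $Z^1(\Gamma_k;\sln_{\Ad\rho_{\lambda,n}})$ yielding irreducibles span a single $(n+2)(n-1)$--dimensional branch; consequently every component through $\rho_{\lambda,n}$ carrying irreducible representations must coincide with $R_{\lambda,n}$.
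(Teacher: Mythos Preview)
The paper does not supply its own proof here; it simply cites \cite{HM14}. Your overall architecture --- pass to a generic irreducible $\rho\in R_\lambda$, decompose $\sln_{\Ad(r_n\circ\rho)}\cong\bigoplus_{j=1}^{n-1}W_j$ via Clebsch--Gordan, prove $\dim H^1(\Gamma_k;W_j)=1$ for each $j$, and invoke Proposition~\ref{prop:smoothpoint} --- is indeed the strategy of \cite{HM14}, so the plan is sound.

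The real gap is your justification that $\dim H^1(\Gamma_k;W_j)=1$ for $j\ge2$. You claim a kernel class of the boundary restriction ``would be detected by $(t-\mu^{2m})$-torsion in a twisted Alexander module, i.e.\ by $\mu^{2m}$ being a root of $\Delta_k$.'' This is valid only when $W_j$ splits as $\bigoplus_m\CC_{\mu^{2m}}$ over $\Gamma_k$ --- which happens at the abelian limit $\rho_\lambda$ but \emph{not} for an irreducible $\rho$ with Zariski-dense image, where $W_j$ is an irreducible $\Gamma_k$-module. At such $\rho$ the relevant invariant is the genuinely twisted polynomial $\Delta_1^{r_{2j+1}\circ\rho}(t)$, not the untwisted $\Delta_k$ evaluated at peripheral eigenvalues. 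Your continuity argument fails for exactly this reason: at the limit one has $\Delta_1^{W_j,\rho_\lambda}(1)\doteq\prod_{m=-j}^{j}\Delta_k(\lambda^{2m})=0$ (the factors $m=\pm1$ vanish by the simple-root hypothesis), so approaching $\rho_\lambda$ gives no non-vanishing, and upper semi-continuity from $\dim H^1(\Gamma_k;W_j\vert_{\rho_\lambda})=3$ only yields $\dim H^1\le3$ nearby, not $=1$. Establishing the equality for generic $\rho$ requires a direct computation with the twisted module, and this is where the substance of \cite{HM14} lies.

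Your uniqueness argument is likewise only a gesture. Under the hypotheses one computes $\dim H^1(\Gamma_k;\sln_{\Ad\rho_{\lambda,n}})=3(n-1)$, whence $\dim Z^1=(n+3)(n-1)>(n+2)(n-1)$, so $\rho_{\lambda,n}$ is not scheme smooth; the remark immediately following the theorem records $2^{n-1}$ branches through it. Singling out the one carrying irreducibles demands an honest tangent-cone or obstruction analysis, which the paper explicitly defers to a separate article; your closing sentence does not substitute for it.
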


\begin{remark} Under the assumptions of Corollary~\ref{cor:irred-exist} it is possible to study
the tangent cone of $R_n(\Gamma_k)$ at $\rho_{\lambda,n}$, and thereby to determine the local structure of $R_n(\Gamma)$. There are
$2^{n-1}$ \emph{branches} of various dimensions of $R_n(\Gamma_k)$ passing through 
$\rho_\lambda$. Nevertheless, only the component
$R_{\lambda,n}$ contains irreducible representations. This will be studied in a forthcoming paper.
\end{remark}

\section{The global structure of character varieties of knot groups}
Not much is known about the global structure of the character varieties of knot groups.
In this section we will present some facts and some examples.

\begin{example}[Diagonal representations]
The characters of diagonal representations of a knot group $\Gamma_k$ form an algebraic component of $X_n(\Gamma_k)$.
A representation $\rho\co\Gamma_k\to\SL(n)$ which is the direct sum of one-dimensional representations is equivalent to a diagonal representation.
The image of a diagonal representation is abelian. 
Hence it factors through $\varphi\colon\Gamma_k\to\ZZ$.
Therefore, the characters of diagonal representations coincide with the characters 
$X_n(\ZZ)\hookrightarrow X_n(\Gamma_k)$.
Recall that $X_n(\ZZ)\cong\CC^{n-1}$. 
\end{example}

\subsection{The distinguished components for hyperbolic knots}\label{subsec:distComp}
Let $k\subset S^3$ be a hyperbolic knot i.e.\ $S^3\smallsetminus k$ has a hyperbolic metric of finite volume. Then there exists, up to complex conjugation,
 a unique one-dimensional component
$X_0\subset X(\Gamma_k,\mathrm{PSL}_2(\mathbf{C}))$ 
which contains the character of the holonomy representation (see \cite[Theorem 8.44]{Kapovich}).
The holonomy representation lifts to a representation 
$\rho\colon\Gamma_k\to\SL(2)$ (not unique) since $H^2(\Gamma_k;\ZZ/2\ZZ)=0$. 
By composing any  
lift with the rational, irreducible, $r$-dimensional representation
$\mathrm{Sym}^{r-1}\co\SL(2)\to\SL(r)$ we obtain an irreducible representation
$\rho_r\co\Gamma_k\to\SL(r)$.
It follows from work of Menal-Ferrer and Porti \cite{Menal-FerrerPorti2012} that 
$\chi_{\rho_r}\in X_r(\Gamma_k)$ is a scheme smooth point contained in a unique 
$(r-1)$-dimensional 
component of $X_r(\Gamma_k)$. 
We will call such a component a \emph{distinguished} component of $X_r(\Gamma_k)$.
For \emph{odd} $r$, as $\mathrm{Sym}^{r-1}\co\SL(2)\to\SL(r)$ factors through $\mathrm{PSL}(2)$, there is a unique distinguished component  
in $X_r(\Gamma)$ up to complex conjugation. 

\subsection{Examples}
The aim of this subsection is to describe the components of the $\SL(3)$-character varieties of the trefoil knot and the figure eight knot, see \cite{HP15,HMP15}.

\subsubsection{Irreducible $\SL(3)$-representations of the trefoil knot group}
Let $k\subset S^3$ be the trefoil knot and $\Gamma=\Gamma_{3_1}$. 
We use the presentation
$$
\Gamma\cong \langle x,y\mid x^2=y^3\rangle\,.
$$
The center of $\Gamma$ is the cyclic group generated by $z=x^2=y^3$. 
The abelianization map $\varphi\co\Gamma\to\ZZ$ satisfies $\varphi(x)=3$, $\varphi(y)=2$, and a meridian of the trefoil is given by $m=xy^{-1}$. Let $\omega$ denote a primitive third root of unity,
$\omega^2+\omega+1=0$. 

For a given representation $\rho\in R_3(\Gamma)$ we put
\[ \rho(x)=A \text{ and }\rho(y)=B\,. \]
If $\rho$ is irreducible then it follows from Schur's Lemma that
the matrix $A^2=B^3\in \{\mathrm{id}_3,\omega\, \mathrm{id}_3, \omega^2\,\mathrm{id}_3\}$ is a central element of 
$\mathrm{SL}(3)$.

\begin{lemma}\label{lem:a^2=b^3}
If $\rho\co\Gamma\to\SL(3)$ is irreducible then
 $A^2=B^3=\mathrm{id}_3$.
\end{lemma}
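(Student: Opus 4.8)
The plan is to argue by contradiction, assuming $\rho$ is irreducible and ruling out the possibilities $A^2=B^3=\omega\,\mathrm{id}_3$ and $A^2=B^3=\omega^2\,\mathrm{id}_3$ left open by the remark preceding the lemma. By Schur's Lemma we already know $A^2=B^3=\zeta\,\mathrm{id}_3$ with $\zeta^3=1$, so it suffices to show that $\zeta\ne1$ forces $\rho$ to be reducible. The whole argument will rest on the eigenvalue structure that the scalar relations, together with $A,B\in\SL(3)$, impose on $A$ and $B$.

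First I would record diagonalizability. Since $A^2=\zeta\,\mathrm{id}_3$ with $\zeta\ne0$, the minimal polynomial of $A$ divides the squarefree polynomial $t^2-\zeta$, so $A$ is diagonalizable with eigenvalues among the two square roots of $\zeta$; likewise $B$ is diagonalizable with eigenvalues among the three cube roots of $\zeta$. The crucial step is then to prove that, when $\zeta\in\{\omega,\omega^2\}$, \emph{both} $A$ and $B$ carry a $2$-dimensional eigenspace, unless $A$ is scalar. For $A$ this is automatic: a diagonalizable $3\times3$ matrix whose eigenvalues lie in a set of only two elements is either scalar or has an eigenvalue of multiplicity $2$, i.e.\ a $2$-dimensional eigenspace $W_A$. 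For $B$ I would invoke $\det B=1$: a scalar $B=\xi\,\mathrm{id}_3$ would give $\det B=\xi^3=\zeta\ne1$, which is impossible, while three \emph{distinct} eigenvalues would have to be exactly the three cube roots of $\zeta$, whose product equals $\zeta\ne1$, again contradicting $\det B=1$. Hence $B$ has exactly two distinct eigenvalues and therefore a $2$-dimensional eigenspace $W_B$.

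Finally I would produce a common invariant line. If $A$ is scalar, then any eigenline of $B$ is stable under the whole image $\rho(\Gamma)$, so $\rho$ is reducible. Otherwise $A$ has its $2$-dimensional eigenspace $W_A$, and since $\dim W_A+\dim W_B=4>3$ we get $\dim(W_A\cap W_B)\ge 1$; a nonzero vector $v\in W_A\cap W_B$ is then a common eigenvector of $A$ and $B$, so $\CC v$ is a $\rho(\Gamma)$-invariant line and $\rho$ is reducible. In both cases we contradict irreducibility, so $\zeta=1$ and $A^2=B^3=\mathrm{id}_3$, as claimed.

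I expect the main obstacle to be the middle step, specifically showing that $B$ cannot be regular semisimple: this is exactly where the determinant-one condition and the arithmetic of roots of unity (the product of the three cube roots of $\zeta$ equals $\zeta$) are indispensable. Without this input, $B$ could have three distinct eigenvalues and no forced $2$-dimensional eigenspace, and the dimension-counting intersection argument would collapse. Everything else — diagonalizability, the intersection bound, and the passage from a common eigenvector to reducibility — is routine.
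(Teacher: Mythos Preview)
Your proof is correct and uses exactly the same ingredients as the paper's argument: the fact that $A$ (being diagonalizable with only two possible eigenvalues) has a $2$-dimensional eigenspace, the intersection argument producing a common eigenvector when $B$ also has a $2$-dimensional eigenspace, and the determinant computation $\det B=\zeta$ when the eigenvalues of $B$ are the three distinct cube roots of $\zeta$.

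The only difference is the organization. The paper argues directly: from irreducibility it deduces that $B$ \emph{must} have three distinct eigenvalues (otherwise the $2$-dimensional eigenspaces of $A$ and $B$ intersect), and then $\det B=\lambda^3=1$ forces $\zeta=1$. You run the contrapositive: assuming $\zeta\neq1$, the determinant condition forbids $B$ from being scalar or from having three distinct eigenvalues, so $B$ has a $2$-dimensional eigenspace and the intersection with $W_A$ yields reducibility. Your version is slightly more explicit about the edge case where $A$ is scalar, which the paper silently absorbs into the assertion that $A$ has an eigenvalue of multiplicity two; otherwise the two arguments are logically identical.
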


\begin{proof} The matrix $A$ has an eigenvalue of multiplicity two, and hence $A$ has a two-dimensional eigenspace. Therefore, $B$ has only one-dimensional eigenspaces, otherwise $\rho$ would not be irreducible. This implies that $B$ has three different eigenvalues:
$\lambda$, $\lambda\omega$, $\lambda\omega^2$ where $\lambda^3\in\{1,\omega,\omega^2\}$.
We obtain $\det(B)=1=\lambda^3$. Therefore $B^3=\mathrm{Id}_3$.
\end{proof}

Lemma~\ref{lem:a^2=b^3} implies that the matrices $A$ and $B$ are conjugate
to
$$
A\sim \big(\begin{smallmatrix}
    1  & &\\
    & -1 & \\
    & & -1
\end{smallmatrix}\big)
\quad\textrm{ and }\quad
B\sim \big(\begin{smallmatrix}
    1  & &\\
    & \omega & \\
    & & \omega^2
\end{smallmatrix}\big)\,.
$$
The corresponding eigenspaces are the plane  $E_A(-1)$, and the lines $E_A(1)$, 
$E_B(1)$, $E_B(\omega)$, and  $E_B(\omega^2)$.

Now, these eigenspaces determine the representation completely, as they determine the matrices 
$A$ and $B$, that have fixed eigenvalues. 
Of course we have $E_A(1)\cap E_A(-1)=0$ 
and $E_B(1)$, $E_B(\omega)$, and  $E_B(\omega^2)$ are also in general position.
Since $\rho$ is irreducible, the five eigenspaces are in general position.
For instance  $E_A(1)\cap (E_B(1)\oplus E_B(\omega))=0$, because otherwise
$  E_B(1)\oplus E_B(\omega)=  E_A(1)\oplus (E_A(-1)\cap (E_B(1)\oplus
E_B(\omega)))$ would be a proper invariant subspace.

We now give a parametrization of the conjugacy classes of the irreducible representations.
The invariant lines correspond to fixed points in
the projective plane $\mathbb{P}^2$, and $E_A(-1)$ determines a projective line. 
\begin{itemize}
\item The first normalization: the line $E_A(-1)$ corresponds to the line at infinity:
\[ \mathbb{P}^1=E_A(-1)= \langle [0:1:0], [0:0:1]\rangle \]
\end{itemize} 
The four invariant lines $E_A(1)$, 
$E_B(1)$, $E_B(\omega)$, and  $E_B(\omega^2)$ are points in the
affine plane $\mathbb C^2=\mathbb{P}^2\setminus\mathbb{P}^1$. They are in general position. 
\begin{itemize} 
\item We fix the three fixed points of $B$, corresponding to the following affine frame. 
\[ E_B(1) = [ 1:0:0],\ E_B(\omega) = [ 1:1:0], \text{ and }E_B(\omega^2) =
[1:0:1]. \]

\item The fourth point (the line  $E_A(1)$) is a point in 
$\mathbb C^2$ which does not lie in the affine lines spanned  by any two of the fixed points of $B$:
$E_A(1) = [2:s:t]$ where $s\neq 0$, $t\neq 0$, or $s+t \neq 2$
\end{itemize}
This gives rise to the subvariety 
$\{\rho_{s,t}\in R(\Gamma,\mathrm{SL}(3))\mid (s,t)\in\mathbf{C}^2\}$, where  
\begin{equation*}
\label{eq:rhost}
\rho_{s,t}(x)=\begin{pmatrix}
           1 & 0 &0\\
           s & -1 & 0\\
           t & 0 &-1
          \end{pmatrix}
\quad\textrm{ and }\quad
\rho_{s,t}(y)=\begin{pmatrix}
           1 & \omega -1 & \omega^2 -1\\
           0 & \omega & 0\\
           0 & 0 & \omega^2
          \end{pmatrix}\,.
\end{equation*}

We obtain the following lemma:
\begin{lemma} \label{lem:(s,t)-irreductible} Every irreducible representation
$\rho\co\Gamma_{3_1}\to\SL(3)$ is equivalent to exactly one representation 
$\rho_{s,t}$. Moreover, $\rho_{s,t}$ is reducible
if and only if $(s,t)$ is contained in one of the three affine lines given by
$s=0$, $t=0$, and $s+t =2$. If $(s,t)\in\{(0,0),(0,2),(2,0)\}$ is the intersection point of two of those lines then
$\rho_{s,t}$ fixes a complete flag, and has the character of a diagonal representation.
\end{lemma}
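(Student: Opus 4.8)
The plan is to phrase the whole lemma in terms of the configuration of the five eigenspaces inside $\mathbb{P}^2$ and to read off (ir)reducibility from incidence relations. Throughout I write $A=\rho_{s,t}(x)$ and $B=\rho_{s,t}(y)$ and use the eigendata checked when setting up the family: $A$ has the $(-1)$-eigenplane $E_A(-1)=\{x_1=0\}$ and the $+1$-eigenline $E_A(1)=[2:s:t]$, while $B$ has the three eigenlines $E_B(1)=[1:0:0]$, $E_B(\omega)=[1:1:0]$, $E_B(\omega^2)=[1:0:1]$.

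For the first assertion I would show that $(s,t)$ is a complete conjugacy invariant of an irreducible $\rho$. Given such a $\rho$, Lemma~\ref{lem:a^2=b^3} together with the discussion preceding the statement gives $A^2=B^3=I_3$ with eigenvalues $1,-1,-1$ and $1,\omega,\omega^2$ respectively, and that the five eigenspaces are in general position. Since a semisimple matrix is determined by its eigenlines and its prescribed eigenvalues, $\rho$ is determined up to conjugacy by the line $E_A(-1)$, the ordered triple $(E_B(1),E_B(\omega),E_B(\omega^2))$, and the point $E_A(1)$. For existence I would first send $E_A(-1)$ to the line at infinity (projective transformations act transitively on lines), which makes the three $B$-eigenpoints an affine frame of $\CC^2$, and then apply the affine transformation sending that frame to $(0,0),(1,0),(0,1)$; this brings $\rho$ into the form $\rho_{s,t}$. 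For uniqueness, if $g\,\rho_{s,t}\,g^{-1}=\rho_{s',t'}$ then $g$ carries $E_A(-1)$ to $E_{A'}(-1)$ and each $E_B(\zeta)$ to $E_{B'}(\zeta)$; as both representations are normalized, $g$ preserves the line at infinity and fixes the three points $(0,0),(1,0),(0,1)$. The only affine transformation fixing these three points is the identity, so $g$ is trivial in $\mathrm{PGL}(3)$ and hence $[2:s:t]=[2:s':t']$, i.e. $(s,t)=(s',t')$.

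Next I would determine the reducibility locus by listing all common invariant subspaces. A common invariant line is a point of $\mathbb{P}^2$ fixed by both $A$ and $B$; the $A$-fixed points are the single point $E_A(1)$ together with the whole line $E_A(-1)$, while the $B$-fixed points are the three vertices, none lying on $E_A(-1)$. Thus a common invariant line exists precisely when $[2:s:t]$ is one of the three vertices, i.e. $(s,t)\in\{(0,0),(2,0),(0,2)\}$. A common invariant plane is a line of $\mathbb{P}^2$ invariant under both; the $B$-invariant planes are the three triangle sides $\{x_3=0\}$, $\{x_2=0\}$, $\{x_1=x_2+x_3\}$, and such a side is $A$-invariant exactly when it contains $E_A(1)$, which gives $t=0$, $s=0$, $s+t=2$ respectively. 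Since a proper nonzero subspace of $\CC^3$ is a line or a plane, $\rho_{s,t}$ is reducible iff $(s,t)$ lies on one of the three lines $s=0$, $t=0$, $s+t=2$; the three corner points, which also carry invariant lines, already lie on these. The step needing care is the exhaustiveness of this search and the check that the line and plane analyses combine to give exactly the three lines.

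Finally, at each corner I would exhibit an invariant complete flag. At $(0,0)$ one has $E_A(1)=[1:0:0]=E_B(1)$, so $\ell=\langle e_1\rangle$ is a common eigenline, and the invariant plane $\{x_3=0\}$ (here $t=0$) contains it, yielding the invariant flag $0\subset\ell\subset\{x_3=0\}\subset\CC^3$; the corners $(2,0)$ and $(0,2)$ are handled identically using $E_A(1)=E_B(\omega)$ and $E_A(1)=E_B(\omega^2)$. A representation preserving a complete flag is conjugate into the upper triangular Borel subgroup, so its character equals that of its diagonal part, a sum of three homomorphisms $\Gamma\to\CC^*$; each factors through $\varphi\co\Gamma\to\ZZ$, so the diagonal part is a diagonal representation and $\chi_{\rho_{s,t}}$ is the character of a diagonal representation. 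I expect the genuine obstacle to be the uniqueness of the normal form — specifically the triviality of the group of projective transformations preserving the normalization — since this is exactly what upgrades the parametrization into a bijection between $\CC^2$ minus the three lines and conjugacy classes of irreducible representations; the incidence computations for reducibility are elementary but must be verified to be complete.
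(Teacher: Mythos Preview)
Your proposal is correct and follows essentially the same approach as the paper: the paper does not give a separate proof of this lemma but derives it from the preceding discussion, which is precisely the eigenspace-configuration-in-$\mathbb{P}^2$ argument you carry out (normalizing $E_A(-1)$ to the line at infinity, pinning the three $B$-eigenpoints to an affine frame, and reading off reducibility from the incidence of $E_A(1)=[2:s:t]$ with the triangle sides). Your treatment actually fills in the details---the simple transitivity of the affine group on frames for uniqueness, the explicit enumeration of common invariant lines and planes, and the flag/diagonal-character argument at the corners---that the paper leaves implicit or defers to \cite{HP15}.
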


The following theorem follows from the above considerations 
(see \cite[Theorem 9.10]{HP15} for more details). 
We let $R_n^\mathit{irr}(\Gamma)\subset R_n(\Gamma)$ denote the Zariski-open subset of irreducible representation
\begin{theorem}
The GIT quotient
$X=\overline{R_3^\mathit{irr}(\Gamma)}\sslash\mathrm{SL}(3)$
of the trefoil knot group $\Gamma$ is isomorphic to
$\mathbb{C}^2$.
Moreover, the Zariski open subset $R_3^\mathit{irr}(\Gamma)$ is
$\mathrm{SL}(3)$-invariant and its
GIT quotient is isomorphic to the complement of three affine lines in general
position in $\mathbb{C}^2$.
\end{theorem}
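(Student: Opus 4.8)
The plan is to realise the family $\{\rho_{s,t}\}$ of Lemma~\ref{lem:(s,t)-irreductible} as a global slice for the conjugation action and to read off the invariant ring directly from it. Write $L\subset\CC^2$ for the union of the three lines $s=0$, $t=0$, $s+t=2$ and $U=\CC^2\setminus L$, and let $\sigma\co\CC^2\to R_3(\Gamma)$ be the morphism $(s,t)\mapsto\rho_{s,t}$. First I would record that $R_3^\mathit{irr}(\Gamma)$ is a Zariski-open, $\SL(3)$-invariant subset: invariance is clear since conjugation preserves irreducibility, and openness is Burnside's theorem (as already used in the proof of Proposition~\ref{prop:irred-exist}). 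By Lemma~\ref{lem:(s,t)-irreductible}, $\sigma(U)\subset R_3^\mathit{irr}(\Gamma)$ and every irreducible representation is conjugate to $\rho_{s,t}$ for a \emph{unique} $(s,t)\in U$; hence $R_3^\mathit{irr}(\Gamma)=\SL(3)\cdot\sigma(U)$. Since $U$ is dense in $\CC^2$ and $\sigma$ is continuous, $\sigma(\CC^2)\subset\overline{R_3^\mathit{irr}(\Gamma)}$, so with $q\co\overline{R_3^\mathit{irr}(\Gamma)}\to X=\overline{R_3^\mathit{irr}(\Gamma)}\sslash\SL(3)$ the quotient map, $\sigma$ induces a morphism $\bar\sigma=q\circ\sigma\co\CC^2\to X$.

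The computational heart is to produce explicit invariant coordinates. The two trace functions $\rho\mapsto\tr\rho(xy)$ and $\rho\mapsto\tr\rho(xy^{-1})$ lie in $\mathcal{O}(R_3(\Gamma))^{\SL(3)}$, and a direct matrix multiplication gives
\[
\tr\rho_{s,t}(xy)=2+(\omega-1)s+(\omega^2-1)t,\qquad
\tr\rho_{s,t}(xy^{-1})=2+(\omega^2-1)s+(\omega-1)t.
\]
The coefficient matrix $\big(\begin{smallmatrix}\omega-1&\omega^2-1\\ \omega^2-1&\omega-1\end{smallmatrix}\big)$ has determinant $3\omega(\omega-1)\neq0$, so $(s,t)\mapsto(\tr\rho_{s,t}(xy),\tr\rho_{s,t}(xy^{-1}))$ is an invertible affine map $\Lambda\co\CC^2\to\CC^2$. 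In particular the coordinate functions $s,t$ on $\sigma(\CC^2)\cong\CC^2$ lie in the subalgebra generated by the restrictions of these two invariants.

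Next I would identify the invariant ring of the closure with $\CC[s,t]$ by studying the restriction homomorphism $r\co\mathcal{O}\big(\overline{R_3^\mathit{irr}(\Gamma)}\big)^{\SL(3)}\to\CC[s,t]$, $f\mapsto f\circ\sigma$. It is injective: if $f\circ\sigma=0$ then, $f$ being invariant, $f$ vanishes on $\SL(3)\cdot\sigma(\CC^2)\supset R_3^\mathit{irr}(\Gamma)$, which is dense in $\overline{R_3^\mathit{irr}(\Gamma)}$, whence $f=0$. It is surjective because its image contains the two restricted trace functions, and these generate $\CC[s,t]$ by the previous paragraph (via $\Lambda$). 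Thus $r$ is an isomorphism; since $\bar\sigma^{\,*}=r$ and $\CC[s,t]$ is a domain, $\bar\sigma\co\CC^2\xrightarrow{\cong}X$ and $X\cong\CC^2$. Note that this argument never needs an explicit description of the reducible boundary orbits: the density of the irreducible locus forces every invariant to be determined by its values on the slice.

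Finally, for the second assertion I would transport the stratification through $\bar\sigma$. A representation $\rho_{s,t}$ is irreducible exactly when $(s,t)\in U$, so $\bar\sigma^{-1}(X^\mathit{irr})=U$, where $X^\mathit{irr}=q(R_3^\mathit{irr}(\Gamma))$. Because a representation whose semisimplification is irreducible is itself irreducible, $R_3^\mathit{irr}(\Gamma)$ is the full (saturated) preimage $q^{-1}(X^\mathit{irr})$ of the open set $X^\mathit{irr}\subset X$; hence its GIT quotient is the open subvariety $X^\mathit{irr}$, which $\bar\sigma$ identifies with $U=\CC^2\setminus L$. Since $L$ is three lines in general position and $\bar\sigma$ is an isomorphism, it follows that $R_3^\mathit{irr}(\Gamma)\sslash\SL(3)$ is $\CC^2$ minus three lines in general position. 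The only genuinely delicate point is the bookkeeping of this last step---verifying that $R_3^\mathit{irr}(\Gamma)$ is saturated, so that its quotient is an \emph{open} piece of $X$ rather than merely the constructible image of $R_3^\mathit{irr}(\Gamma)$; everything else reduces to the affine computation of $\Lambda$ together with the density argument for injectivity of $r$.
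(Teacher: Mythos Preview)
Your argument is correct and follows the same route the paper sketches: use the slice $(s,t)\mapsto\rho_{s,t}$ from Lemma~\ref{lem:(s,t)-irreductible} to parametrise the quotient, and identify the three reducible loci with the three affine lines. The paper itself does not spell out the verification (it defers to \cite[Theorem~9.10]{HP15}), whereas you supply the missing algebraic step by exhibiting the two trace functions $\tr\rho_{s,t}(xy)$ and $\tr\rho_{s,t}(xy^{-1})$ and checking that they give an invertible affine change of coordinates; your injectivity/surjectivity analysis of the restriction map $r$ and the saturation argument for $R_3^{\mathit{irr}}(\Gamma)=q^{-1}(X^{\mathit{irr}})$ are the right way to make the claim rigorous.
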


\begin{remark}
The same arguments as above apply to torus knots $T(p, 2)$, $p$ odd, to prove that the variety of irreducible
$\mathrm{SL}_3(\mathbb{C})$-characters 
consist of $(p-1)(p-2)/2$ disjoint components isomorphic to $\mathbb C^2$, 
$(p-1)/2$ components of characters of partial reducible representations, 
and the component of characters of diagonal representations. 
\smallskip

In general, the $\SL(3)$-character variety for torus knots  was studied by Mu\~noz and Porti	
\cite{MunPor}. In the general case $T(p,q)$, $p,q>2$ there are 4-dimensional components in $X_3(\Gamma_{T(p,q)})$ corresponding to the configuration of 6 points in the projective plane.
\end{remark}

\subsubsection{The $\SL(3)$-character variety of the figure eight knot}
Let $\Gamma=\Gamma_{4_1}$ be the group of the figure eight knot.
The figure eight knot has genus one, and its complement fibres over the circle \cite{BZH}.
Hence the commutator group of $\Gamma$ is a free group of rank two, and a presentation is given by
\[ 
\Gamma \cong  \langle t,a,b\mid tat^{-1} = ab,\ tbt^{-1} = bab\rangle\,.
\]
A peripheral system is given by $(m,\ell)= (t,[a,b])$.
The amphicheirality of the figure eight knot implies that its group has an automorphism $h\co\Gamma\to\Gamma$ which maps the peripheral system $(m,\ell)$
to $(m^{-1},\ell)$ up to conjugation. Such an automorphism is explicitly given by 
%
\[
h(t) =  t a^{-1} t^{-1} a t^{-1} \sim t^{-1},\quad
h(a) =  a^{-1} t a  b^{-1}  a^{-1} t^{-1} a \sim b^{-1},\quad
h(b) =  a^{-1} t a  t^{-1} a\sim a 
\]
Notice that  we obtain 
$h(m) = t a^{-1}  m^{-1}   t^{-1} a$ and
$h(\ell) = h([a,b]) = a^{-1} t a   [b^{-1},a]   a^{-1} t^{-1} a$.
The relation $t^{-1} a^{-1} t = b a^{-2}$ gives that
the peripheral system $(h(m),h(\ell))$ is conjugated to $(m^{-1},\ell)$ as desired.

The structure of the $\SL(3)$-character variety of the figure eight knot had been studied in detail in \cite{HMP15}, see also \cite{Falbel}. The character variety $X_3(\Gamma_{4_1})$ has 5 components:
\begin{itemize}
\item the component containing the characters of abelian representations;
\item one component containing the characters of the representations 
$\rho_\lambda=\alpha\otimes\lambda^\varphi\oplus\lambda^{-2\varphi}$ where 
$\alpha\in R_2(\Gamma_{4_1})$ is irreducible (compare Equation~\ref{eq:n=gen} with $\beta$ trivial);
\item three components $V_0$, $V_1$ and $V_2$ containing characters of irreducible representations. The component $V_0$ is the distinguished component 
(see Section~\ref{subsec:distComp}). The two other components which come from a surjection 
$\Gamma_{4_1}\twoheadrightarrow D(3,3,4)$ onto a triangle group. 
\end{itemize}

Let us describe the components $V_1$ and $V_2$ without going too much into the technical details.
An epimorphism $\phi\co\Gamma\twoheadrightarrow D(3,3,4)=\langle k,l \mid l^3,\ k^3,\ (kl)^4 \rangle$
 is given by
\[
\phi(a)= k^{-1} l^{-1} k l , \quad \phi(b) = k l \quad \text{ and }\quad  \phi(t) = k l k\,.
\]
It satisfies $\phi(b)^4=1$ and 
$\phi(m^3 \ell) =1$. Notice that the surjection $\phi$ induces an injection
 \[
 \phi^*\co X_3(D(3,3,4) )\hookrightarrow X_3(\Gamma)\,.
 \]

\begin{remark} The surjection $\phi\co\Gamma\twoheadrightarrow D(3,3,4)$ is related to an exceptional  Dehn filling
on the figure-eight knot $K$ (see \cite{Gordon}).
In particular, the Dehn filling manifold $K(\pm3)$
is a \emph{small} Seifert fibered manifold, and
$K(\pm 3)$ fibers over $S^2(3,3,4)$.
The orbifold fundamental group 
$\pi_1^\mathcal{O}( S^2(3,3,4))$ is
isomorphic to the von Dyck group
$\pi_1^\mathcal{O}( S^2(3,3,4))\cong D(3,3,4)$.
Hence the  surjection
$
\Gamma\to\pi_1(K(\pm 3))\twoheadrightarrow\pi_1(K(\pm 3))/\mathrm{center}\cong D(3,3,4)
$ is natural.
\end{remark}

The center of $\pi_1(K(\pm 3))$ is generated by a regular fibre.
Any irreducible representation of $\pi_1(K(\pm 3))\to\SL(3)$
maps the fibre to the center of $\SL(3)$.
By using the description of $X_3(F_2)$ given by  Lawton \cite{Law07}) it quite elementary to determine  $X_3(D(3,3,4))$ explicitly. 
The proof of the next lemma can be found in \cite[Lemma 10.1]{HMP15}: 
\begin{lemma}\label{lem:D334}
The variety $\overline{X^{irr}(D(3,3,4),\mathrm{SL}(3,\CC) )}$ has a component $\mathcal{W}$ of dimension $2$ and three isolated points.
The variety $\mathcal{W}$ is isomorphic to the hypersurface in $\CC^3$ given by the equation
$$
\zeta^2-(\nu \bar{\nu}-2) \zeta+\nu^3+\bar{\nu}^3-5\nu\bar{\nu}+5=0\,.
$$
Here, the parameters are $\nu=\chi(k^{-1}l)$, $\bar{\nu}=\chi(k l^{-1})$ and $\zeta = \chi([k,l])$.
For every $\chi\in \mathcal{W}$, $\chi(k^{\pm 1})=\chi(l^{\pm 1})=0$ and $\chi( (kl)^{\pm 1})=1$.
 
Moreover, all characters in $\mathcal{W}$ are characters of  irreducible representations except for the three points
$(\nu,\bar{\nu},\zeta)=(2,2,1)$, $(2\varpi, 2\varpi^2, 1)$, $(2\varpi^2,2\varpi, 1)$, $\varpi=e^{2\pi i /3}$.
\end{lemma}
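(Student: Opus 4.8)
The plan is to realize the irreducible locus of $X_3(D(3,3,4))$ as a subvariety of Lawton's explicit model of $X_3(F_2)$ and to cut it out by the trace conditions forced by the torsion relations $k^3=l^3=(kl)^4=1$. Write $F_2=\langle k,l\rangle$ and recall from \cite{Law07} that $X_3(F_2)$ is a degree-six hypersurface in $\CC^9$ with coordinates the eight traces $\tr(k),\tr(l),\tr(kl),\tr(k^{-1}),\tr(l^{-1}),\tr((kl)^{-1}),\tr(kl^{-1}),\tr(k^{-1}l)$ together with $\zeta=\tr([k,l])$, the last being a root of a monic quadratic $\zeta^2-P\zeta+Q=0$ whose coefficients $P,Q$ are explicit polynomials in the first eight traces (the two roots being $\tr([k,l])$ and $\tr([l,k])$).

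First I would determine the conjugacy classes of $\rho(k),\rho(l)$ for an irreducible $\rho\co D(3,3,4)\to\SL(3)$. Since $\rho(k)^3=\mathrm{Id}_3$, the matrix $\rho(k)$ is diagonalizable with eigenvalues in $\{1,\varpi,\varpi^2\}$ and product $1$; were it scalar, every $\rho(l)$-eigenspace would be $\rho$-invariant, contradicting irreducibility (exactly as in the proof of Lemma~\ref{lem:a^2=b^3}). Hence $\rho(k)$, and likewise $\rho(l)$, has the three distinct eigenvalues $1,\varpi,\varpi^2$, so $\chi(k^{\pm1})=\chi(l^{\pm1})=1+\varpi+\varpi^2=0$, pinning down four of Lawton's coordinates. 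For the last relation, the fourth-root-of-unity eigenvalues of $\rho(kl)$ with product $1$ form one of $\{1,i,-i\}$, $\{1,-1,-1\}$, $\{i,i,-1\}$, $\{-i,-i,-1\}$ (the pattern $\{1,1,1\}$ would force $\rho(l)=\rho(k)^{-1}$, hence reducibility). Only $\{1,i,-i\}$ is regular; being the sole class with distinct eigenvalues, it yields the two-dimensional family and gives $\chi((kl)^{\pm1})=1$, whereas the three repeated-eigenvalue classes are rigid and will supply the three isolated irreducible characters.

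With $\tr(k^{\pm1})=\tr(l^{\pm1})=0$ and $\tr((kl)^{\pm1})=1$ fixed, the free coordinates are $\nu=\tr(k^{-1}l)$, $\bar\nu=\tr(kl^{-1})$ and $\zeta=\tr([k,l])$. Substituting these six values into Lawton's quadratic $\zeta^2-P\zeta+Q=0$ and simplifying is the computational core of the argument; I expect it to collapse to
$$\zeta^2-(\nu\bar\nu-2)\zeta+\nu^3+\bar\nu^3-5\nu\bar\nu+5=0,$$
which defines $\mathcal W$. To see that $\mathcal W$ is a genuine two-dimensional component consisting generically of irreducible characters, I would check that this polynomial is irreducible over $\CC$ (its discriminant $(\nu\bar\nu-2)^2-4(\nu^3+\bar\nu^3-5\nu\bar\nu+5)$ is not a square in $\CC[\nu,\bar\nu]$) and, mimicking the eigenspace bookkeeping used for the trefoil in Lemma~\ref{lem:(s,t)-irreductible}, that a point in general position corresponds to five eigenspaces in general position, hence to an irreducible representation, with the trace map recovering $(\nu,\bar\nu,\zeta)$.

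Finally I would locate the reducible characters lying on $\mathcal W$ and the three isolated points of $\overline{X^{irr}(D(3,3,4),\SL(3))}$. The reducible (semisimple) limits compatible with the eigenvalue constraints fix a complete flag and have the character of a diagonal representation, exactly as for the trefoil; computing their $(\nu,\bar\nu,\zeta)$-coordinates should give precisely $(2,2,1)$ together with its two $\varpi$-twists $(2\varpi,2\varpi^2,1)$ and $(2\varpi^2,2\varpi,1)$, and one checks directly that these three triples satisfy the displayed equation. The three isolated points arise from the repeated-eigenvalue classes $\{1,-1,-1\}$, $\{i,i,-1\}$, $\{-i,-i,-1\}$ of $\rho(kl)$ found above: each admits, up to conjugacy, a single rigid irreducible representation, giving an isolated point disjoint from $\mathcal W$. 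The main obstacle is the explicit substitution into and simplification of Lawton's degree-six relation; by contrast the eigenvalue analysis and the irreducibility of the resulting surface are routine, and the identification of the six distinguished points (three reducible on $\mathcal W$, three isolated irreducible) reduces to a finite check once the parametrisations are in hand.
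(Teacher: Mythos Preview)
Your overall strategy—embedding $X_3(D(3,3,4))$ into Lawton's model of $X_3(F_2)$ and slicing by the trace conditions imposed by $k^3=l^3=(kl)^4=1$—is exactly the approach the paper has in mind (it refers to \cite{Law07} and defers the details to \cite{HMP15}). Your eigenvalue analysis for $\rho(k)$, $\rho(l)$ and the five possible spectra of $\rho(kl)$ is correct, and the identification of the regular class $\{1,i,-i\}$ with the two–dimensional component is right. A dimension count (regular $6$–dimensional class for $\rho(l)$, $4$–dimensional subregular class for $\rho(kl)$, minus the $2$–torus centralizing $\rho(k)$) confirms that each repeated–eigenvalue case is $0$–dimensional, which is the justification you are missing for the three isolated points; note in particular that the case $\{1,-1,-1\}$ forces $(\rho(kl))^2=I$, so that representation factors through $D(3,3,2)\cong A_4$.

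There is, however, a genuine error in your description of the three reducible characters on $\mathcal W$. They do \emph{not} fix a complete flag and are \emph{not} characters of diagonal representations. Indeed, a totally reducible representation with $\chi(k)=\chi(l)=0$ has $\rho(k)=\mathrm{diag}(1,\varpi,\varpi^2)$ and $\rho(l)=\rho(k)^{-1}$ (the only choice compatible with $(kl)^4=1$), giving $\chi(kl)=3$, which lies off $\mathcal W$. The reducible points on $\mathcal W$ are instead characters of representations of type $\sigma\oplus\tau$ with $\sigma\co D(3,3,4)\to\SL(2)$ \emph{irreducible} and $\tau$ one–dimensional. Taking $\tau$ trivial, one has $\tr\sigma(k)=\tr\sigma(l)=-1$, $\tr\sigma(kl)=0$, hence by the $\SL(2)$ trace identity $\tr\sigma(k^{-1}l)=\tr\sigma(kl^{-1})=1$ and $\tr\sigma([k,l])=0$, so $(\nu,\bar\nu,\zeta)=(1+1,1+1,0+1)=(2,2,1)$; the $\varpi$–twists of $\tau$ give the other two points. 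If you carry out your proposed computation with diagonal limits you will obtain $(\nu,\bar\nu)=(0,0)$ instead of $(2,2)$, and the argument breaks. Replace the trefoil analogy by this $2\oplus1$ analysis and the rest of your outline goes through.
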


Now, the components $V_1$ and $V_2$ are  given by 
\[
V_1 = \phi^*(\mathcal{W}) \subset X_3(\Gamma_{4_1}) 
\quad\text{and }\quad
V_2 = (\phi\circ h)^*(\mathcal{W})\,.
\]
The components $V_1$ and $V_2$ are swapped by $h^*\co X_3(\Gamma_{4_1})\to X_3(\Gamma_{4_1})$, and $V_0$ is preserved.

Further details in the proof of  Lemma~\ref{lem:D334} allow to describe those  
three  isolated points.
Composing with $\phi^*$, they correspond to the three characters 
of irreducible metabelian representations in $X_3(\Gamma_{4_1})$ 
that do not lie in $V_2$. Altogether, there are five characters of irreducible metabelian representations (see \cite{Boden-Friedl2008}). All these metabelian characters are scheme smooth (see \cite{BF3}).
The character corresponding to a point of $V_0$ comes from
a surjection $\Gamma_{4_1}\twoheadrightarrow A_4$ composed with the irreducible
representation $A_4\to\SL(3)$.

\begin{proposition}\label{prop:non-dist-components}
 The components $V_1$ and $V_2$ are characters of representations which factor
 through the surjections $\Gamma\twoheadrightarrow \pi_1(K(\pm3))$ respectively.
These components are isomorphic to the hypersurface
 $$
 \zeta^2-(\nu \bar{\nu}-2) \zeta+ \nu^3+\bar{\nu}^3-5\nu\bar{\nu}+5=0.
 $$
 Here, the parameters are 
 \[ 
 \nu = 
 \begin{cases}
\chi(t) &\text {for $V_2$,}\\
\chi(t^{-1}) &\text {for $V_1$,}
 \end{cases}
\quad
 \bar{\nu} = 
 \begin{cases}
\chi(t^{-1}) &\text {for $V_2$,}\\
\chi(t) &\text {for $V_1$,}
 \end{cases}
 \quad
  \zeta = 
 \begin{cases}
\chi(a) &\text {for $V_2$,}\\
\chi(b^{-1}) &\text {for $V_1$.}
 \end{cases}
 \]
All characters are irreducible except for the three points
 $(\nu,\bar{\nu},\zeta)=(2,2,1),\ (2\varpi, 2\varpi^2, 1), (2\varpi^2,2\varpi, 1)$, with $\varpi=e^{2\pi i/3}$,
 that correspond to the intersection $V_1 \cap V_2=V_0\cap V_1 \cap V_2$.
\end{proposition}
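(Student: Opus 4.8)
The plan is to transport the entire structure of the surface $\mathcal{W}$ from Lemma~\ref{lem:D334} across the two embeddings $\phi^*$ and $(\phi\circ h)^*$, and to supply separately the topological input coming from the exceptional Dehn fillings. First I would justify the factorization in the first sentence. The surjection $\phi\co\Gamma\twoheadrightarrow D(3,3,4)$ is, by the Remark preceding the proposition, the composite $\Gamma\twoheadrightarrow\pi_1(K(3))\twoheadrightarrow\pi_1(K(3))/\mathrm{center}\cong D(3,3,4)$, so every character in $V_1=\phi^*(\mathcal{W})$ is pulled back from $\pi_1(K(3))$. Conversely, since the regular fibre generates the centre of $\pi_1(K(3))$, any irreducible $\SL(3)$-representation of $\pi_1(K(3))$ sends the fibre into the centre of $\SL(3)$ and hence factors through $D(3,3,4)$; thus $V_1$ consists exactly of the characters factoring through $\pi_1(K(3))$. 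Applying the amphichiral automorphism $h$, which carries the peripheral system $(m,\ell)$ to $(m^{-1},\ell)$, interchanges the two fillings, so $V_2=(\phi\circ h)^*(\mathcal{W})$ collects precisely the characters factoring through $\pi_1(K(-3))$.

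For the isomorphism statement I would use that $\phi^*\co X_3(D(3,3,4))\hookrightarrow X_3(\Gamma)$ is a closed embedding, as recorded just before Lemma~\ref{lem:D334}. Restricting it to the two-dimensional piece $\mathcal{W}$ identifies $V_1$ with $\mathcal{W}$ as a variety, and Lemma~\ref{lem:D334} presents $\mathcal{W}$ as the hypersurface $\zeta^2-(\nu\bar{\nu}-2)\zeta+\nu^3+\bar{\nu}^3-5\nu\bar{\nu}+5=0$. Composing instead with $h^*$ gives the same conclusion for $V_2$; since $h^*$ is an involution of $X_3(\Gamma)$ that fixes $V_0$ and swaps $V_1\leftrightarrow V_2$, the two components are isomorphic copies of this single hypersurface.

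The technical heart of the proof is the parameter dictionary in the table. I would compute the three coordinate functions $\nu=\chi(k^{-1}l)$, $\bar{\nu}=\chi(kl^{-1})$, $\zeta=\chi([k,l])$ of Lemma~\ref{lem:D334} after substituting the explicit words $\phi(a)=k^{-1}l^{-1}kl$, $\phi(b)=kl$, $\phi(t)=klk$, and their $h$-translates for $V_2$. The evaluation rests on the trace relations valid throughout $\mathcal{W}$, namely $\chi(k^{\pm1})=\chi(l^{\pm1})=0$ and $\chi((kl)^{\pm1})=1$, together with the fact that $k,l$ have order $3$ while $kl$ has order $4$ with eigenvalues $1,i,-i$. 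Using cyclicity of the trace and $k^{-1}=k^2$, $l^{-1}=l^2$ one finds for instance $\chi(klk)=\chi(lk^{-1})=\nu$, and the interchange of $\phi$ with $\phi\circ h$ accounts for the swap $\nu\leftrightarrow\bar{\nu}$ (equivalently $t\leftrightarrow t^{-1}$) between the $V_2$- and $V_1$-columns. I expect this bookkeeping to be the main obstacle: several naive pullbacks, such as $\chi(b^{\pm1})=\chi((kl)^{\pm1})=1$, are \emph{constant} on $\mathcal{W}$, so the genuine third coordinate must be realised through a carefully chosen conjugacy representative or commutator word, and one must then verify that the resulting triple of characters gives an isomorphism onto the hypersurface rather than a merely dominant morphism.

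Finally I would treat the exceptional locus. Lemma~\ref{lem:D334} singles out the points $(\nu,\bar{\nu},\zeta)=(2,2,1),(2\varpi,2\varpi^2,1),(2\varpi^2,2\varpi,1)$ of $\mathcal{W}$ as the only reducible characters, all others being irreducible; transporting them by $\phi^*$ and by $(\phi\circ h)^*$ yields the three excluded characters on $V_1$ and on $V_2$. To prove $V_1\cap V_2=V_0\cap V_1\cap V_2$ equals exactly this triple, I would show these three characters are $h^*$-invariant, so that they lie simultaneously on $V_1$ and on $V_2=h^*V_1$, and then identify them with characters common to the distinguished component $V_0$ described in Section~\ref{subsec:distComp} and the $A_4$-construction. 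Matching them against the finitely many (scheme smooth) irreducible metabelian characters of $\Gamma_{4_1}$ recalled before the proposition, and checking that no further $h^*$-invariant points occur on $\mathcal{W}$, pins down the intersection and identifies the irreducible locus of each component as the complement of these three points.
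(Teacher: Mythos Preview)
The paper does not supply a self-contained proof here; the proposition is presented as a summary of the preceding discussion (the surjection $\phi$, the automorphism $h$, Lemma~\ref{lem:D334}, and the definitions $V_1=\phi^*(\mathcal{W})$, $V_2=(\phi\circ h)^*(\mathcal{W})$), with the details left to \cite{HMP15}. Your outline follows exactly that route, so the approach is the right one.

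One substantive point on the bookkeeping you flag as the main obstacle. Your computation $\chi(klk)=\chi(k^2l)=\chi(k^{-1}l)=\nu$ is correct, and together with $\phi(a)=k^{-1}l^{-1}kl=[k,l]$ it gives, on $V_1=\phi^*(\mathcal{W})$, the identifications $\chi(t)=\nu$, $\chi(t^{-1})=\bar\nu$, $\chi(a)=\zeta$, while $\chi(b^{\pm1})=\chi((kl)^{\pm1})=1$ is indeed constant. In other words, your worry that $\chi(b^{-1})$ cannot serve as a coordinate on $\phi^*(\mathcal{W})$ is justified: the dictionary you recover is precisely the column labelled $V_2$ in the proposition, not $V_1$. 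There is nothing to ``overcome'' here --- the table's $V_1/V_2$ labels are swapped relative to the definitions given in the text just above, and carrying out your computation exposes this rather than obstructing the proof.

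For the final assertion $V_1\cap V_2=V_0\cap V_1\cap V_2$, your $h^*$-invariance argument handles one inclusion, but showing that the three reducible characters actually lie on the distinguished component $V_0$ needs a separate check (the $A_4$-surjection mentioned in the paper accounts for only a single point of $V_0$, not the whole triple). This step is genuinely done in \cite{HMP15} and is not derivable from the material in the present paper alone.
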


{\small

}
\end{document}